\newcolumntype{F}{>{$}c<{\hspace{-0.9ex}$}}
\newcolumntype{:}{>{$}m{0.8ex}<{$}}
\newcolumntype{R}{>{$}r<{$}}
\newcolumntype{C}{>{$}c<{$}}
\newcolumntype{L}{>{$}l<{$}}
\newcolumntype{N}{@{}>{$}l<{$}}
\newcommand{\linesep}[1]{\renewcommand{\arraystretch}{#1}}
\newlength\horspace
\newcommand{\h}[1][1.0]{\hspace{#1\horspace}}
\newlength\verspace
\renewcommand{\v}[1][1.0]{\vspace{#1\verspace}\xspace}
\newlength\negverspace
\newcommand{\nv}[1][1.0]{\vspace{#1\negverspace}\xspace}
\tikzset{iso/.style={draw=none,every to/.append style={edge node={node [sloped, allow upside down, auto=false]{$\cong$}}}}}
\tikzset{adjunction/.style={draw=none,every to/.append style={edge node={node [sloped, allow upside down, auto=false]{$\dashv$}}}}}
\tikzset{simeq/.style={draw=none,every to/.append style={edge node={node [sloped, allow upside down, auto=false]{$\simeq$}}}}}
\tikzset{simeqS/.style={draw=none,every to/.append style={edge node={node [sloped, allow upside down, auto=false]{$\raisebox{0.8em}{$\simeq$}$}}}}}
\tikzset{aiso/.style={simeqS,preaction={draw,->}}}
\tikzset{dotdot/.style={dash pattern=on 0.25ex off 0.2ex, dash phase=0ex}}
\tikzset{RightA/.style={double distance=3.5pt,>={Implies},->},%
	triple/.style={-,preaction={draw,RightA}},%
	quadruple/.style={preaction={draw,RightA,shorten >=0pt},shorten >=1pt,-,double,double distance=0.2pt}}
\newtheorem{teor}{Theorem}[section]
\newtheorem{coroll}[teor]{Corollary}
\newtheorem{prop}[teor]{Proposition}
\theoremstyle{definition}
\newtheorem{defne}[teor]{Definition}
\newtheorem{rem}[teor]{Remark}
\newtheorem{exampl}[teor]{Example}
\newtheorem{rec}[teor]{Recall}
\newtheorem{cons}[teor]{Construction}
\def\nameit#1{\textrm{#1}~}
\def\defx{\nameit{Definition}}
\def\thex{\nameit{Theorem}}
\def\prox{\nameit{Proposition}}
\def\conx{\nameit{Construction}}
\def\remx{\nameit{Remark}}
\def\recx{\nameit{Recall}}
\def\corx{\nameit{Corollary}}
\def\exax{\nameit{Example}}
\NewDocumentEnvironment{cd}{s O{7} O{7} b}{%
	\IfBooleanF{#1}{\begin{equation*}}\begin{tikzcd}[row sep=#2ex,column sep=#3ex,ampersand replacement=\&]
			#4
		\end{tikzcd}\IfBooleanF{#1}{\end{equation*}}\ignorespacesafterend}{}
\newenvironment{enum}{\begin{enumerate}[label=$($\hspace{0.12ex}\roman*\hspace{0.075ex}$)$]}{\end{enumerate}}
\newenvironment{enumT}{\begin{enumerate}[label=$($\hspace{-0.1ex}\roman*\hspace{0.13ex}$)$]}{\end{enumerate}}
\newenvironment{fun}{\[\begin{tabular}{F:RCL}}{\end{tabular}\]\ignorespacesafterend}
\newenvironment{eqD}[1]{\begin{equation}\label{#1}}{\end{equation}\ignorespacesafterend}
\newenvironment{eqD*}{\begin{equation*}}{\end{equation*}\ignorespacesafterend}
\def\:{\colon}
\providecommand\ordinarycolon{:}
\def\vcentcolon{\mathrel{\mathop\ordinarycolon}}
\newcommand{\deq}{\mathrel{\vcentcolon\mkern-1.2mu=}}
\def\phi{\varphi}
\def\eps{\varepsilon}
\newcommand{\romanuppercase}[1]{\uppercase\expandafter{\romannumeral #1\relax}}
\newcommand{\p}[1]{\big(\mkern1mu{#1}\mkern1mu\big)}
\newcommand{\pteor}[1]{$($\h[-1.9]{#1}\h[0.6]$)$}
\def\dfn#1{{\bfseries\itshape #1}}
\def\predfn#1{{\itshape #1}}
\newcommand{\refs}[1]{$($\ref{#1}$)$}
\newcommand{\scaleu}[2][1.2]{{\scalebox{#1}{$#2$}}}
\newcommand{\ov}[1]{\overline{#1}}
\def\t#1{\widetilde{#1}}
\newcommand{\tight}{_\tau}
\newcommand{\loose}{_\lambda}
\def\c{\circ}
\newcommand{\st}{^{\ast}}
\newcommand{\stb}{_{\ast}}
\newcommand{\sliceslant}[2]{{\raisebox{.1em}{$#1$}\mkern-1mu\left/\raisebox{-.25em}{$#2$}\right.}}
\newcommand{\laxsliceslant}[2]{{\raisebox{.1em}{$#1$}\mkern-1mu\left/_{\mkern-4.1mu\operatorname{lax}}\right.\raisebox{-.25em}{$#2$}}}
\DeclareFontFamily{OT1}{pzc}{}
\DeclareFontShape{OT1}{pzc}{m}{it}{<->s*[1.21]pzcmi7t}{}
\DeclareMathAlphabet{\mathpzc}{OT1}{pzc}{m}{it}
\DeclareFontFamily{U}{dutchcal}{\skewchar\font=45}
\DeclareFontShape{U}{dutchcal}{m}{n}{<->s*[1.05] dutchcal-r}{}
\DeclareMathAlphabet{\mathlcal}{U}{dutchcal}{m}{n}
\newcommand{\catfont}[1]{\ensuremath{\mathpzc{#1}}\xspace}
\newcommand{\A}{\catfont{A}}
\newcommand{\B}{\catfont{B}}
\newcommand{\C}{\catfont{C}}
\newcommand{\D}{\catfont{D}}
\newcommand{\E}{\catfont{E}}
\newcommand{\F}{\catfont{F}}
\newcommand{\K}{\catfont{K}}
\newcommand{\J}{\catfont{J}}
\def\P{\catfont{P}}
\newcommand{\T}{\catfont{T}}
\newcommand{\X}{\catfont{X}}
\def\L{\catfont{L}}
\def\S{\catfont{S}}
\newcommand{\1}{\catfont{1}}
\newcommand{\2}{\catfont{2}}
\newcommand{\Set}{\catfont{Set}}
\newcommand{\Cat}{\catfont{Cat}}
\newcommand{\CAT}{\catfont{Cat}}
\NewDocumentCommand{\Sh}{o m}{
	\ensuremath{\catfont{Sh}\hspace{-0.15ex}\left({#2}\IfNoValueF{#1}{,{#1}}\right)}
}
\NewDocumentCommand{\Alg}{t+ t' m}{
	\ensuremath{\IfBooleanT{#1}{\catfont{Ps}\mbox{-}}{#3}\mbox{-}\catfont{\IfBooleanT{#2}{Co}Alg}}
}
\newcommand{\slice}[2]{\sliceslant{#1}{#2}}
\newcommand{\laxslice}[2]{\laxsliceslant{#1}{#2}}
\newcommand{\HomC}[3]{{#1}\left({#2},\h[1]{#3}\right)}
\newcommand{\m}[2]{\ensuremath{\left[#1,#2\right]}\xspace}
\newcommand{\moplaxn}[2]{\ensuremath{\left[#1,#2\right]_{\oplaxn}}\xspace}
\newcommand{\moplaxm}[2]{\ensuremath{\left[#1,#2\right]_{\oplaxm}}\xspace}
\newcommand{\mF}[2]{\ensuremath{\left[#1,#2\right]^{\F}}\xspace}
\newcommand{\mFoplax}[2]{\ensuremath{\left[#1,#2\right]^{\F}_{\oplax}}\xspace}
\newcommand{\mFlax}[2]{\ensuremath{\left[#1,#2\right]^{\F}_{\lax}}\xspace}
\newcommand{\mFopolax}[2]{\ensuremath{\left[#1,#2\right]^{\F}_{\opn{(op)lax}}}\xspace}
\newcommand{\opn}[1]{\operatorname{#1}}
\newcommand{\y}[1]{\ensuremath{\operatorname{y}\hspace{-0.2ex}\left({#1}\right)}}
\DeclareMathOperator{\yy}{y}
\newcommand{\id}[1]{\operatorname{id}_{#1}}
\newcommand{\Id}[1]{\operatorname{Id}_{#1}}
\newcommand{\op}{\ensuremath{^{\operatorname{op}}}}
\newcommand{\x}[1][]{\h[-1]\times_{#1}\h[-1]}
\newcommand{\restr}[2]{{\left.\kern-\nulldelimiterspace {#1}\vphantom{\big|} \right|_{#2}}}
\newcommand{\dom}{\operatorname{dom}}
\newcommand{\pr}[1]{\operatorname{pr}_{#1}}
\newcommand{\cart}[3][]{{\operatorname{Cart}_{#1}\hspace{-0.2ex}\left({#2},\h{#3}\right)}}
\DeclareMathOperator{\colim}{colim}
\DeclareMathOperator{\bbicolim}{bicolim}
\DeclareMathOperator{\pseudo}{pseudo}
\DeclareMathOperator{\lax}{lax}
\DeclareMathOperator{\oplax}{oplax}
\DeclareMathOperator{\oplaxn}{oplax^{cart}}
\DeclareMathOperator{\oplaxm}{oplax^{mark}}
\DeclareMathOperator{\ssigma}{sigma}
\newcommand{\wcolim}[2]{{\colim}^{#1}\h{#2}}
\newcommand{\bicolim}[2]{{\bbicolim}^{#1}\h{#2}}
\newcommand{\pseudocolim}[2]{\pseudo\mbox{-}\h[1.5]\wcolim{#1}{#2}}
\newcommand{\oplaxncolim}[2][\Delta 1]{\oplaxn\mbox{-}\h[1.5]\wcolim{#1}{#2}}
\newcommand{\oplaxFcolim}[2]{\oplax^\F\mbox{-}\h[1.5]\wcolim{#1}{#2}}
\newcommand{\sigmacolim}[1]{\ssigma\mbox{-}\h[1.5]\wcolim{\Delta 1}{#1}}
\newcommand{\sigmabicolim}[1]{\ssigma\mbox{-}\h[1.5]\bicolim{\Delta 1}{#1}}
\newcommand{\Int}[1]{\ensuremath{\int \hspace{-0.35ex} #1}}
\newcommand{\Intdiag}[1]{\ensuremath{\scaleu{\int} \hspace{-0.15ex} #1}}
\newcommand{\Groth}[1]{\Int{#1}}
\newcommand{\Grothdiag}[1]{\Intdiag{#1}}
\newcommand{\groth}[1]{\mathcal{G}\mkern-1.4mu\left(#1\right)}
\def\H{\ensuremath{\mathcal{H}}}
\newcommand{\too}{\longrightarrow}
\newcommand{\mto}{\mapsto}
\newcommand{\ar}[2][]{\xrightarrow[#1]{#2}}
\newcommand{\aar}[2][]{\xrightarrow[#1]{#2}} 
\def\xlongrightarrowfill@{\arrowfill@\relbar\relbar\longrightarrow}
\newcommand{\arr}[2][]{%
	\ext@arrow 0099\xlongrightarrowfill@{#1}{#2}}
\newcommand{\aarr}[2][]{%
	\ext@arrow 0099\xlongrightarrowfill@{#1}{#2}} 
\newcommand{\aR}[2][]{%
	\ext@arrow 0055{\Rightarrowfill@}{#1}{#2}}
\def\xLongrightarrowfill@{\arrowfill@\Relbar\Relbar\Longrightarrow}
\newcommand{\aRR}[2][]{%
	\ext@arrow 0099\xLongrightarrowfill@{#1}{#2}}
\def\aitofill@{\arrowfill@{\lhook\joinrel\relbar}\relbar\rightarrow}
\newcommand{\aito}[2][]{%
	\ext@arrow 3095\aitofill@{#1}{#2}}
\def\Longaitofill@{\arrowfill@{\lhook\joinrel\relbar\joinrel\relbar}\relbar\rightarrow}
\newcommand{\aitoo}[2][]{%
	\ext@arrow 0099\Longaitofill@{#1}{#2}}
\newcommand{\al}[2][]{\xleftarrow[#1]{#2}}
\def\xlongleftarrowfill@{\arrowfill@\longleftarrow\relbar\relbar}
\newcommand{\all}[2][]{%
	\ext@arrow 0099\xlongleftarrowfill@{#1}{#2}}
\newcommand{\aL}[2][]{%
	\ext@arrow 0055{\Leftarrowfill@}{#1}{#2}}
\def\xLongleftarrowfill@{\arrowfill@\Longleftarrow\Relbar\Relbar}
\newcommand{\aLL}[2][]{%
	\ext@arrow 0099\xLongleftarrowfill@{#1}{#2}}
\def\xmapstofill@{\arrowfill@{\mapstochar\relbar}\relbar\rightarrow}
\newcommand{\am}[2][]{%
	\ext@arrow 0395\xmapstofill@{#1}{#2}}
\def\xlongmapstofill@{\arrowfill@\relbar\relbar\longmapsto}
\newcommand{\amm}[2][]{%
	\ext@arrow 0399\xlongmapstofill@{#1}{#2}}
\newcommand{\eqq}{\DOTSB\protect\Relbar\protect\joinrel\Relbar}
\def\xeqqfill@{\arrowfill@\Relbar\Relbar\eqq}
\newcommand{\aeqq}[2][]{%
	\ext@arrow 0099\xeqqfill@{#1}{#2}}
\def\xRrightarrowfill@{\arrowfill@\equiv\equiv\Rrightarrow}
\newcommand{\aM}[2][]{\ext@arrow 0359\xRrightarrowfill@{#1}{#2}}
\newcommand{\Llongrightarrow}{%
	\DOTSB\protect\equiv\protect\joinrel\Rrightarrow}
\def\xLlongrightarrowfill@{\arrowfill@\equiv\equiv\Llongrightarrow}
\newcommand{\aMM}[2][]{%
	\ext@arrow 0099\xLlongrightarrowfill@{#1}{#2}}
\newcommand{\aoplaxn}[1]{\aR[\oplaxn]{#1}}
\newcommand{\aoplaxm}[1]{\aR[\oplaxm]{#1}}
\newcommand{\aloose}[1]{\aR[\opn{loose}]{#1}}
\newcommand{\iso}{\cong}
\newcommand{\aequi}{\ensuremath{\stackrel{\raisebox{-1ex}{\kern-.3ex$\scriptstyle\sim$}}{\rightarrow}}}
\newcommand{\aequii}{\ensuremath{\stackrel{\raisebox{-1ex}{\kern-.3ex$\scriptstyle\sim$}}{\longrightarrow}}}
\newcommand{\PB}[1]{\arrow[#1,phantom,"\scalebox{1.6}{\color{black}$\lrcorner$}",very near start]}
\newcommand{\Ar}[4][]{\arrow[#2,"{#3}"{#1},""{name=#4, anchor=center}]}
\newcommand{\Ars}[4][]{\arrow[#2,"{#3}"'{#1},""{name=#4, anchor=center}]}
\newcommand{\Arb}[6][]{\arrow[#2,"{#3}"{#1},from=#4,to=#5,shorten <= #6 em, shorten >= #6 em]}
\newcommand{\Arbs}[6][]{\arrow[#2,"{#3}"'{#1},from=#4,to=#5,shorten <= #6 em, shorten >= #6 em]}
\NewDocumentCommand{\fib}{O{n} O{2.3} mmm}{%
	\begin{cd}*[#2][5]
		{#3}\ifx#1n{\arrow[d,"{\,\scaleu{#4}}"]}\else{\ifx#1i{\arrow[d,hookrightarrow,"{\,\scaleu{#4}}"]}\else{\ifx#1e{\arrow[d,equal,"{\,\scaleu{#4}}"]}\else{\ifx#1R{\arrow[d,Rightarrow,"{\,\scaleu{#4}}"]}\fi}\fi}\fi}\fi\\
		{#5}\ifx#1o{\arrow[u,"{\,\scaleu{#4}}"']}\fi
	\end{cd}\xspace
}
\NewDocumentCommand{\fibdiag}{O{n} O{2.3} mmm}{%
	\begin{cd}*[#2][5]
		{#3}\ifx#1n{\arrow[d,"{\,{#4}}"]}\else{\ifx#1i{\arrow[d,hookrightarrow,"{\,{#4}}"]}\else{\ifx#1e{\arrow[d,equal,"{\,{#4}}"]}\else{\ifx#1R{\arrow[d,Rightarrow,"{\,{#4}}"]}\fi}\fi}\fi}\fi\\
		{#5}\ifx#1o{\arrow[u,"{\,{#4}}"']}\fi
	\end{cd}\xspace
}
\NewDocumentCommand{\sq}{s O{n} O{7} O{7} O{} O{2.7} O{2.2} O{0.5} O{n}}{%
	\def\foosq##1##2##3##4##5##6##7##8{%
		\IfBooleanTF{#1}{\begin{cd}*}{\begin{cd}}[#3][#4]
				{##1}\ifx#2p{\PB{rd}}\fi\arrow[r,"{##5}"]\ifx#9l{\arrow[d,equal,"{##6}"']}\else{\arrow[d,"{##6}"']}\fi\&{##2}\ifx#9r{\arrow[d,equal,"{##7}"]}\else{\arrow[d,"{##7}"]}\fi\ifx#2l{\arrow[ld,Rightarrow,shorten <=#6ex,shorten >=#7ex,"{#5}"{pos=#8}]}\fi\\
				{##3}\ifx#9d{\arrow[r,equal,"{##8}"']}\else{\arrow[r,"{##8}"']}\fi\ifx#2o{\arrow[ur,Rightarrow,shorten <=#6ex,shorten >=#7ex,"{#5}"{pos=#8}]}\fi\&{##4}
		\end{cd}}%
		\foosq}
\NewDocumentCommand{\tr}{s O{4.5} O{6.5} O{0} O{0} O{n} O{0} O{} O{0}}{%
	\def\footr##1##2##3##4##5##6{%
		\IfBooleanTF{#1}{\begin{cd}*}{\begin{cd}}[#3][#2]
				{##1}\arrow[rr,"{##4}"]
				\Ars[inner sep =0.2ex]{dr}{##5}{A}\&[#4ex]\&[#5ex]{##2}\Ar[inner sep =0.2ex]{ld}{##6}{B}\\
				\&{##3}
				\ifx#6l{\Arb{Rightarrow,shift right=#7em}{#8}{A}{B}{#9}}\else{\ifx#6o{\Arbs{Rightarrow,shift right=#7em}{#8}{B}{A}{#9}}\else{\ifx#6i{\Arbs[inner sep=0.9ex]{iso,shift right=#7em}{#8}{A}{B}{#9}}\else{\ifx#6e{\Arb{equal,shift right=#7em}{#8}{A}{B}{#9}}\else{}\fi}\fi}\fi}\fi
		\end{cd}}%
		\footr}
\NewDocumentCommand{\tc}{s t+ O{7} O{30} O{} O{} O{} o}{
	\def\footc##1##2##3##4##5{%
		\FPmul\Mulresulttwo{#3}{#3}%
		\FPmul\Mulresult{0.0026}{\Mulresulttwo}%
		\IfBooleanTF{#1}{\begin{cd}*}{\begin{cd}}[#3][#3]
				{##1}\Ar[#5]{r,bend left=#4}{##3}{A}\Ars[#6]{r,bend right=#4}{##4}{B}\&{##2}
				\IfBooleanTF{#2}{\Arb[description,pos=0.49]}{\Arb}{Rightarrow #7}{\mkern1mu {##5}}{A}{B}{\IfNoValueTF{#8}{\Mulresult}{#8}}
		\end{cd}}%
		\footc}
\begin{document}

\title{Colimits in 2-dimensional slices}
\author[L. Mesiti]{Luca Mesiti}
\address{School of Mathematics, University of Leeds}
\email{mmlme@leeds.ac.uk}
\keywords{2-categories, slice, colimits, lax adjoints, F-categories, change of base}
\subjclass[2020]{18N10, 18A30, 18A40, 18D30, 18D15}

\begin{abstract}
	We generalize to dimension 2 the well-known fact that a colimit in a 1-dimensional slice is precisely the map from the colimit of the domains of the diagram that is induced by the universal property. For this, we find the need to reduce weighted 2-colimits to cartesian-marked oplax conical ones, and as a consequence the need to consider lax slices.

	We prove results of preservation, reflection and lifting of 2-colimits for the domain 2-functor from a lax slice. We thus generalize to dimension 2 the whole fruitful calculus of colimits in 1-dimensional slices. We achieve this within the framework of enhanced (or $\F$-)category theory. The preservation result assumes products in the base 2-category and uses an original general theorem which states that a lax left adjoint preserves appropriate 2-colimits if the adjunction is strict on one side and suitably $\F$-categorical.
	
	Finally, we apply the same general theorem of preservation of 2-colimits to the 2-functor of change of base along a split Grothendieck opfibration between lax slices. We prove that this change of base 2-functor is indeed a left adjoint of the kind described above by laxifying the proof that Conduch\'{e} functors are exponentiable. We conclude extending the result of preservation of 2-colimits for the change of base 2-functor to any finitely complete 2-category with a dense generator.
\end{abstract}

\maketitle

\setcounter{tocdepth}{1}
\tableofcontents

\section{Introduction}

It is well-known that, in dimension $1$, a colimit in a slice is precisely the map from the colimit of the domains of the diagram that is induced by the universal property of the colimit. This fact, together with the results of preservation, reflection and lifting of all colimits for the domain functor from a slice, gives a complete calculus of colimits in $1$-dimensional slices (see \thex\ref{colimitsinslicesdim1}). And such a calculus has been proven useful in myriads of applications, in particular in the context of locally cartesian closed categories or for general exponentiability of morphisms, in categorical logic, algebraic geometry and topos theory. Indeed, an exponentiable morphism $f:E\to B$ in $\C$, that is an exponentiable object in $\slice{\C}{B}$, can be characterized as a morphism which admits all pullbacks along it and is such that the change of base functor $f\st\:\slice{\C}{B}\to \slice{\C}{E}$ has a right adjoint. The latter condition implies, and by adjoint functor theorems is often implied by, preservation of all colimits for $f\st$. The calculus of colimits in $1$-dimensional slices is what allows to apply the exponentiability of a morphism to colimits in the slice that come from colimits in the category $\C$, i.e.\ the ones that we have in practice.

The main result of this paper is a generalization to dimension $2$ of this fruitful $1$-dimensional calculus, including results of preservation, reflection and lifting of $2$-colimits for the domain $2$-functor from a lax slice. Theorems on suitable change of base $2$-functors between lax slices are presented as well. The lax slice is indeed the appropriate $2$-dimensional slice to consider in order to achieve such generalization, as we justify with two different approaches.

These results, in combination with our~\cite{mesiti_twosetenrgrothconstrlaxnlimits} and our joint work with Caviglia~\cite{cavigliamesiti_indexedgrothconstr}, are then applied in our~\cite{mesiti_twoclassifiersdensegenstacks} to produce a 2-dimensional subobject classifier in the 2-categories of stacks. This generalizes to dimension 2 the fundamental fact that Grothendieck topoi are elementary topoi.

The following theorem condenses the calculus of colimits in $1$-dimensional slices.
\begin{teor}\label{colimitsinslicesdim1}
	Let $\C$ be a category with products and let $M\in\C$. The domain functor $\dom\:\slice{\C}{M}\to\C$ preserves, reflects and lifts uniquely all colimits \pteor{and thus creates all colimits}.
	
	Moreover, for every diagram $D\:\A\to \C$ with $\A$ small that admits a colimit in $\C$, every morphism $q\:\colim_A{D(A)}\to M$ in $\C$ is the colimit of a diagram in $\slice{\C}{M}$. More precisely,
	\begin{equation}\label{propertycoliminslicedim1}
		\fib{\colim_A{D(A)}}{q}{M}=\h[3]\colim_{A}{\fib{D(A)}{q\c i_A}{M}}\quad\text{in }\slice{\C}{M},
	\end{equation}
	where the $i_A\:D(A)\to \colim_A{D(A)}$ are the inclusions that form the universal cocone.
\end{teor}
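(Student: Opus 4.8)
The plan is to reduce all four assertions---unique lifting, preservation, reflection, and the explicit formula~\eqref{propertycoliminslicedim1}---to a single construction, exploiting that $\slice{\C}{M}$ is the category of elements of the representable $\C(-,M)$ and that $\dom$ is the associated projection. First I would fix a diagram $F\:\A\to\slice{\C}{M}$, write $F(A)=(f_A\:D(A)\to M)$ so that $\dom F = D$, and record the one observation everything rests on: the structure maps $f_A$ assemble into a cocone over $D$ with apex $M$, since for $\alpha\:A\to A'$ the slice condition $f_{A'}\c F(\alpha)=f_A$ is exactly the cocone compatibility.

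Next, assuming $D$ admits a colimit $\p{\colim_A D(A),\,(i_A)}$ in $\C$---which is the hypothesis of the lifting statement and of the second part---I would invoke the universal property against the cocone $(f_A)$ to obtain a unique $q\:\colim_A D(A)\to M$ with $q\c i_A=f_A$. This $q$ turns the $i_A$ into morphisms of $\slice{\C}{M}$ and so produces a canonical cocone over $F$ lying over $(i_A)$; the uniqueness of $q$ is precisely the unique-lifting claim. I would then verify that this cocone is a colimit in the slice: given a competing cocone with apex $(g\:Y\to M)$ and legs $\phi_A\:D(A)\to Y$, the $\phi_A$ form a cocone over $D$ in $\C$, inducing a unique $\phi\:\colim_A D(A)\to Y$ with $\phi\c i_A=\phi_A$; it remains only to check that $\phi$ is a slice morphism, i.e.\ $g\c\phi=q$, which follows because $g\c\phi$ and $q$ both satisfy $(-)\c i_A=g\c\phi_A=f_A$ and the induced map into $M$ is unique. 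Taking $C=\colim_A D(A)$ and reading this construction off with the given $q$ yields exactly~\eqref{propertycoliminslicedim1}.

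From this core I would harvest the remaining properties formally. Reflection is the same back-and-forth run in reverse: if a slice cocone over $F$ has underlying cocone a colimit of $D$, the displayed computation shows it is universal in $\slice{\C}{M}$, with its structure map forced to be the induced $q$. Preservation then follows in the presence of the colimit in $\C$: a colimit of $F$ in the slice is unique up to isomorphism, hence isomorphic to the lift just built, so $\dom$ sends it to something isomorphic to $\colim_A D(A)$, again a colimit. The parenthetical ``creates'' is then the conjunction of unique lifting with this preservation and reflection.

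The step demanding the most care is the verification of universality in the slice---specifically, that the comparison $\phi$ induced in $\C$ is automatically compatible with the maps to $M$. This is where the \emph{uniqueness} clause of the universal property in $\C$, rather than mere existence, does the real work. I would also flag one genuine subtlety behind the word ``preserves'': when $D$ fails to admit a colimit in $\C$, a colimit of $F$ may still exist in $\slice{\C}{M}$ without its image being a colimit in $\C$ (for instance in a poset, where the slice join is computed only among the bounds lying below $M$). This is exactly why the honest content of the first part is \emph{creation}, and why the second part carries the standing hypothesis that $D$ admits a colimit in $\C$.
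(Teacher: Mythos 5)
Your proof is correct, and it is the direct, elementary argument; the paper never proves this theorem in situ (it is quoted in the Introduction as well known) and instead justifies it a posteriori in \remx\ref{teordomrewritten} through the colim-fibration formalism of Section~\ref{sectioncolimitsin2slices}: $\slice{\C}{M}$ is the category of elements of the representable $\y{M}\:\C\op\to\Set$, so $\dom$ is a discrete fibration; a discrete fibration reflecting all colimits is a colim-fibration (\prox\ref{characterizecolimfibration}); and equation~\refs{propertycoliminslicedim1} is then read off from the explicit liftings. Your computation is precisely the concrete unwinding of that abstraction: inducing $q$ from the cocone $(f_A)$ is the unique lifting of the cocone legs along the discrete fibration, and your key verification that the comparison map $\phi$ satisfies $g\c\phi=q$ --- via the \emph{uniqueness} clause of the universal property in $\C$ --- is exactly the point where reflection of colimits does the work in \prox\ref{characterizecolimfibration}. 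What the paper's route buys is a formulation that survives the passage to dimension $2$, where your direct argument would have to juggle weighted cocylinders; what your route buys is self-containedness and the explicit formula with no machinery. Your closing caveat is also well taken: as literally stated, ``preserves all colimits'' requires either that $\dom\c F$ admit a colimit in $\C$ (the case covered by creation) or an adjunction $\dom\dashv M\x -$, hence binary products in $\C$; your poset example (two objects admitting two incomparable minimal upper bounds, one of which is $M$) does break unconditional preservation, so the honest unconditional content of the first sentence is creation, which is all the paper ever uses downstream.
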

Notice that this theorem recovers the property we mentioned above, that a colimit in $\slice{\C}{M}$ is precisely the map from the colimit of the domains which is induced by the universal property. Indeed, half of this fact is captured by the preservation of colimits for $\dom\:\slice{\C}{M}\to \C$, whereas the other half, that is harder to capture, is represented by equation~\refs{propertycoliminslicedim1}. In dimension $1$, the latter special property holds because a cocone on $M$ is the same thing as a diagram in the slice over $M$. But in dimension $2$, we need weighted $2$-colimits and then weighted $2$-cocylinders rather than cocones. This makes it harder to establish a bijection with diagrams in a $2$-dimensional slice, as such diagrams still have a conical shape.

In this paper, we first focus on generalizing the special property of equation~\refs{propertycoliminslicedim1} to dimension $2$ (\thex\ref{teordomis2colimfib}), extracting from a now weighted $2$-cocylinder a diagram in a $2$-dimensional slice that works. We show two different approaches to this. The first approach (\conx\ref{consfirstapproach}) is more intuitive, and is based on the reduction of weighted $2$-colimits to essentially conical ones, namely cartesian-marked oplax conical ones. The idea of such reduction goes back to Street's~\cite{street_limitsindexedbycatvalued}. More recently, Descotte, Dubuc and Szyld brought new attention to it in their~\cite{descottedubucszyld_sigmalimandflatpseudofun}, where they call sigma colimits the pseudo version of cartesian-marked oplax conical colimits. We have described such reduction in detail with new more elementary proofs in~\cite{mesiti_twosetenrgrothconstrlaxnlimits}. The idea behind cartesian-marked oplax conical colimits is the following. Conical colimits do not suffice anymore in dimension $2$, basically because functors from $\1$ to a category $\C$ cannot capture the whole of $\C$, but just its objects. The philosophy behind weighted $2$-colimits is to capture the whole of $\C$ with functors from every possible category (or actually just $\2$) to it. But another solution is given by considering functors from $\1$ to $\C$ together with natural transformations between them. This brings to cartesian-marked oplax conical $2$-colimits, that are as expressive as weighted $2$-colimits, but offer substantial advantages in certain situations. Indeed there is sometimes the need, as in this paper, to use essentially conical shapes. The price to pay is to have (coherent) $2$-cells inside the cones. The reduction of weighted $2$-colimits to cartesian-marked oplax conical ones is allowed and regulated by the 2-category of elements construction. Such construction is an extension of the usual Grothendieck construction that admits $2$-functors $\A\to \CAT$ from a $2$-category $\A$ (rather than just a $1$-category). It was introduced by Street in~\cite{street_limitsindexedbycatvalued}; we studied this construction in detail in our~\cite{mesiti_twosetenrgrothconstrlaxnlimits}, both from an elementary and an abstract perspective.

The second approach, culminating with \thex\ref{teordomis2colimfib}, is instead more abstract, based on an apparently original concept of colim-fibration, that we give both in dimension $1$ (\defx\ref{defcolimfibrationdim1}) and dimension $2$ (\defx\ref{def2colimfib}), as well as on the 2-category of elements construction. Both the approaches show the need to consider lax slices in order to generalize the special property of equation~\refs{propertycoliminslicedim1} to dimension $2$. In the first one, for example, this corresponds to only being able to essentially conicalize weighted $2$-colimits, rather than to strictly conicalize them.

We also characterize (\thex\ref{teorcolimitsinlaxslicesthatcomefromthebase}) which colimits in the lax slice come from colimits in the base 2-category via the generalization of equation~\refs{propertycoliminslicedim1}. We explain how these are not all colimits, as opposed to what happens in dimension 1.

We achieve a result of reflection of $2$-colimits for the domain $2$-functor $\dom\:\laxslice{\E}{M}\to \E$ from a lax slice (\thex\ref{teordomreflects}). This is the main part of the proof that $\dom\:\laxslice{\E}{M}\to \E$ is a 2-colim-fibration (\thex\ref{teordomis2colimfib}). Further than reflecting appropriate $2$-colimits, a $2$-colim-fibration is in particular a discrete 2-fibration, that is, what is classified by the 2-category of elements construction. The notion of discrete $2$-fibration is described both in our~\cite{mesiti_twosetenrgrothconstrlaxnlimits} and in Lambert's~\cite{lambert_discretetwofib}. It corresponds to a locally discrete version of Hermida's~\cite{hermida_somepropoffibasafibredtwocat} $2$-fibrations.

We then show a result of lifting of $2$-colimits for $\dom\:\laxslice{\E}{M}\to \E$ in \prox\ref{proplifting}. This is based on a generalization to dimension $2$ of the bijective correspondence between cocones on $M$ and diagrams in the slice over $M$ (\prox\ref{correspoplaxncoconesdiag}). The $2$-dimensional correspondence is captured and justified by $\F$-category theory, also called enhanced $2$-category theory and introduced in Lack and Shulman's~\cite{lackshulman_enhancedtwocatlimlaxmor}. Roughly, the idea is to consider $2$-categories, whose morphisms we think as loose, with a selected subclass of morphisms that we call tight. We then ask $2$-functors to preserve the tightness of morphisms. A more thorough recall is in \recx\ref{recFcategorytheory}.

$\F$-category theory is then crucial in establishing a result of preservation of $2$-colimits for $\dom\:\laxslice{\E}{M}\to \E$ (\thex\ref{teordomhassemilaxFadjoint}), assuming that $\E$ has products. Indeed we guarantee preservation of $2$-colimits by an original general theorem (\thex\ref{teorsemilaxFadjpreserve}) that states that a lax left adjoint (\defx\ref{deflaxadj}) preserves appropriate 2-colimits (\defx\ref{defsoplaxFcolimit}) if the adjunction is strict on one side and suitably $\F$-categorical (see the partly original \defx\ref{deflaxFadj}). Lax adjoints have been firstly introduced in Gray's~\cite{gray_formalcattheory}, and the idea is to admit unit and counit to only be lax natural. At the level of hom-categories, such laxness translates as having an adjunction between them rather than an isomorphism (or an equivalence as for a biadjunction); see \recx\ref{reclaxadjunction}. We need lax adjunctions because the suitable right adjoint to the domain $2$-functor $\dom\:\laxslice{\E}{M}\to \E$ is still $M\x -$ (as in dimension $1$), but the unit is now only lax natural. A lax adjunction between $\dom\:\laxslice{\E}{M}\to \E$ and $M\x -$ would not be enough to guarantee that $\dom$ preserves 2-colimits. But taking then the tight part of a lax slice to be the strict slice (see \remx\ref{rephrasecorrespondence}), we prove the crucial fact that such a lax adjunction is suitably $\F$-categorical and strict on one side. And we can then conclude that $\dom$ preserves appropriate 2-colimits. $\F$-categorical adjunctions appear also in Walker's~\cite{walker_laxfamilialrepreslaxgenfact}; another potential source of examples is Bourke's~\cite{bourke_twodimmonadicity}. Remember that, despite the preservation result is expressed in an $\F$-categorical language, it is always possible to start from a weighted $2$-colimit and view it in this context, after reducing it to a cartesian-marked oplax conical one (\recx\ref{rectwocatofel}). We also explain what such $\F$-categorical assumptions explicitly mean in practice.

Finally, we apply the general $\F$-categorical theorem of preservation of $2$-colimits to the $2$-functor of change of base along a split Grothendieck opfibration between lax slices (see \prox\ref{proptaustextendstoatwofunctor}). In dimension $1$, the concept of change of base between slices is definitely helpful, and it is well-known that the pullback perfectly realizes such a job. For $\CAT$, given a functor $\tau\:\E\to \B$, it is still a good idea to consider the pullback $2$-functor $\tau\st\:\slice{\CAT}{\B}\to \slice{\CAT}{\E}$ between strict slices. It is well-known that such change of base $2$-functor has a right 2-adjoint $\tau\stb$, and thus preserves all weighted 2-colimits, precisely when $\tau$ is a Conduch\'{e} functor (as Conduch\'{e} functors are the exponentiable morphisms in $\CAT$). This is proved in Conduch\'{e}'s~\cite{conduche_existadjointsadroitcondfunc}, with the ideas already present in Giraud's~\cite{giraud_methodedeladescente}. However, in order to generalize the calculus of colimits in $1$-dimensional slices to dimension $2$, we need to consider lax slices. And it is then helpful to have a change of base $2$-functor between lax slices of a finitely complete $2$-category. We can achieve this by calculating comma objects or, equivalently, pullbacks along split Grothendieck opfibrations (that serve as a kind of fibrant replacement, see \prox\ref{propreplacement}). Taking pullbacks along opfibrations is preferable in the context of this paper, since Grothendieck opfibrations in $\CAT$ are always Conduch\'{e} and we can generalize the ideas for finding a right adjoint to the pullback functor $\tau\st\:\slice{\CAT}{\B}\to \slice{\CAT}{\E}$ (from Conduch\'{e}'s~\cite{conduche_existadjointsadroitcondfunc}) to lax slices. Notice that lax slices have a fixed direction and we then need to take opfibrations, while Conduch\'{e} functors are now too unbiased.

We prove that $\tau\st\:\laxslice{\CAT}{\B}\to \laxslice{\CAT}{\E}$ has a loose $\F$-categorical lax right adjoint that is strict on one side, in \thex\ref{teortausthasadjoint}. This generalizes also Palmgren's~\cite{palmgren_groupoidsloccartclos}, that proved a similar result for pseudoslices of groupoids, from the comma objects point of view. It is also related to Johnstone's~\cite{johnstone_fibrationsandpartialproducts}, which characterizes Grothendieck opfibrations as those functors for which postcomposition gives a lax left adjoint to $\tau\st\:\laxslice{\CAT}{\B}\to \laxslice{\CAT}{\E}$ that is (pseudo-)strict on one side. Our theorem then implies the preservation of appropriate $2$-colimits for the $2$-functor $\tau\st$ between lax slices.

We also show that the $2$-functor of change of base along a split Grothendieck opfibration between lax slices makes sense in a general $2$-category rather than just in \CAT (\prox\ref{proptaustextendstoatwofunctor}). We conclude proving that the $2$-functor $\tau\st$ between lax slices preserves appropriate $2$-colimits also in the case of prestacks (\prox\ref{proptaustprestacks}) and more in general of finitely complete $2$-categories with a dense generator (\thex\ref{teortaustdensegenerator}).

\subsection*{Outline of the paper}

In Section~\ref{sectionlaxfadjpreserv}, we prove a general theorem (\thex\ref{teorsemilaxFadjpreserve}) which states that a lax left adjoint preserves appropriate colimits if the adjunction is strict on one side and suitably $\F$-categorical. We also introduce a few needed original $\F$-categorical notions, after recalling the basics of $\F$-category theory and lax adjunctions. The results of this section will be very useful in Sections~\ref{sectionliftingpreservfordom} and~\ref{sectionchangeofbase}. 

In Section~\ref{sectioncolimitsin2slices}, we generalize to dimension 2 the well-known fact that a colimit in a 1-dimensional slice is the map from the colimit of the domains of the diagram that is induced by the universal property (\thex\ref{teordomis2colimfib}). We do this with two different approaches: firstly via reducing weighted $2$-colimits to cartesian-marked oplax conical ones (\conx\ref{consfirstapproach}) and then via $2$-colim-fibrations (\defx\ref{def2colimfib}). This section includes a result of reflection of $2$-colimits for $\dom\:\laxslice{\E}{M}\to \E$. 

In Section~\ref{sectionliftingpreservfordom}, we generalize to dimension $2$ the bijective correspondence between cocones on $M$ and diagrams in the slice over $M$ (\prox\ref{correspoplaxncoconesdiag}), thanks to $\F$-category theory. We then prove results of lifting and preservation of $2$-colimits for $\dom\:\laxslice{\E}{M}\to \E$ (\prox\ref{proplifting} and \thex\ref{teordomhassemilaxFadjoint}). We achieve the result of preservation via the work of Section~\ref{sectionlaxfadjpreserv}, showing that $\dom\:\laxslice{\E}{M}\to \E$ has an appropriate $\F$-categorical lax right adjoint (when $\E$ has products). Finally, we characterize which 2-colimits in the lax slice come from 2-colimits of the base via the work of Section~\ref{sectioncolimitsin2slices}.

In Section~\ref{sectionchangeofbase}, we apply the general theorem of preservation of $2$-colimits presented in Section~\ref{sectionlaxfadjpreserv} to the $2$-functor of change of base along a split Grothendieck opfibration between lax slices (\thex\ref{teortausthasadjoint}). We show indeed that such change of base 2-functor has an appropriate $\F$-categorical lax right adjoint. We conclude extending the result of preservation of 2-colimits for the change of base between lax slices to prestacks (\prox\ref{proptaustprestacks}) and then to any finitely complete $2$-category with a dense generator (\thex\ref{teortaustdensegenerator}).

\section{Lax $\F$-adjoints and preservation of colimits}\label{sectionlaxfadjpreserv}

In this section, we present a general theorem of preservation of 2-colimits for lax left adjoints (\thex\ref{teorsemilaxFadjpreserve}). We achieve this within the framework of enhanced (or \mbox{$\F$-)}category theory, introduced in Lack and Shulman's~\cite{lackshulman_enhancedtwocatlimlaxmor}. This theorem will be very useful in the following sections to prove results of preservation of 2-colimits for both the domain 2-functor from a lax slice (assuming to have products in the base 2-category) (\thex\ref{teordomhassemilaxFadjoint}) and the change of base 2-functors between lax slices (\thex\ref{teortausthasadjoint}). We will indeed explain in Section~\ref{sectioncolimitsin2slices} that we need to consider lax slices in order to generalize to dimension 2 the fruitful calculus of colimits in 1-dimensional slices. And taking lax slices will result in having lax adjunctions rather than strict ones. For example, assuming to have products in $\E$, the domain 2-functor $\dom\:\laxslice{\E}{M}\to \E$ has only a lax right adjoint, given by $M\x -$ (\thex\ref{teordomhassemilaxFadjoint}).

The work of Section~\ref{sectioncolimitsin2slices}, which includes the generalization to dimension 2 of the special property of equation~\refs{propertycoliminslicedim1} (of \thex\ref{colimitsinslicesdim1}), can be understood also without reading this section. Nonetheless, $\F$-category theory will be helpful in the entire paper.

We begin recalling basic definitions and tools of $\F$-category theory, for which we take as main reference Lack and Shulman's~\cite{lackshulman_enhancedtwocatlimlaxmor}. We also recall the concept of lax adjunction and the universal mapping property that characterizes it, for which we take as references Gray's~\cite{gray_formalcattheory} and Bunge's~\cite{bunge_coherentextrelationalalg}. We then aim at the main theorem of this section, introducing also needed original notions in the framework of $\F$-category theory. Although being a lax left adjoint is not enough to preserve 2-colimits, we prove (\thex\ref{teorsemilaxFadjpreserve}) that being a \predfn{right-semi-lax left $\F$-adjoint} is enough to preserve all \predfn{tight strict/oplax $\F$-colimits}. Furthermore, we prove that being only a \predfn{right-semi-lax loose left $\F$-adjoint} is enough to guarantee the preservation of a large class of $2$-colimits. 

\begin{rec}\label{recFcategorytheory}
	$\F$ is the cartesian closed full subcategory of $\CAT^\2$ (the category of arrows in $\CAT$) determined by the functors which are injective on objects and fully faithful (i.e.\ full embeddings). It is possible to enrich over $\F$, obtaining $\F$-category theory. An $\F$-category $\S$ is then given by a collection of objects, a hom-category ${\HomC{\S}{X}{Y}}\tight$ of tight morphisms and a second hom-category ${\HomC{\S}{X}{Y}}\loose$ of loose morphisms that give $2$-category structures (respectively) $\S\tight$ and $\S\loose$ to $\S$, together with an identity on objects, faithful and locally fully faithful $2$-functor $J_\S\:\S\tight\to\S\loose$. An $\F$-functor $F\:\S\to \T$ is a $2$-functor $F\loose\:\S\loose\to\T\loose$ that restricts to a $2$-functor $F\tight\:\S\tight\to\T\tight$ (forming a commutative square); this is equivalent to $F\loose$ preserving tightness. And an $\F$-natural transformation is a $2$-natural transformation $\alpha\loose$ between loose parts that restricts to one between the tight parts; this is equivalent to $\alpha\loose$ having tight components. It is then true that the category $\F$ is enriched over itself, with tight morphisms the morphisms of $\F$, loose morphisms the functors between loose parts and $2$-cells the $2$-natural transformations between the latter. And for every $\F$-category $\S$ and $S\in \S$ we can build a copresheaf $\HomC{\S}{S}{-}\:\S\to \F$, that sends $S'$ to the full embedding $\HomC{\S\tight}{S}{S'}\to\HomC{\S\loose}{S}{S'}$.
	
	Given $\F$-categories $\S$ and $\T$, there is an $\F$-category $\mF{\S}{\T}$ of $\F$-functors from $\S$ to $\T$, where the tight morphisms are the $\F$-natural transformations, the loose morphisms are the $2$-natural transformations between the loose parts and the $2$-cells are the modifications between the loose morphisms. But we will need also an (op)lax version $\mFopolax{\S}{\T}$ of it, which is the $\F$-category defined as follows:
	\begin{description}
		\item[an object] is an $\F$-functor $G\:\S\to\T$;
		\item[a loose morphism $G\aloose{}H$] is an (op)lax natural transformation $\alpha\loose$ between the loose parts such that the structure $2$-cells on tight morphisms are identities, that precisely means that $\alpha\loose\ast J_\S$ is (strictly) $2$-natural; we call them \dfn{loose strict/(op)lax};
		\item[a tight morphism $G\aR{}H$] is a loose one that restricts to a $2$-natural transformation between the tight parts, which is equivalent to a loose morphism with tight components; they are usually called \dfn{strict/(op)lax};
		\item[a $2$-cell] is a modification between the loose morphisms.
	\end{description}
	We can then apply the definitions above to the case $\T=\F$, obtaining two $\F$-categories of copresheaves on $\S$. The strict one, $\mF{\S}{\F}$, can be characterized as follows:
	\begin{description}
		\item[an object] is an $\F$-functor $G\:\S\to\F$, that we can identify with a pair of $2$-functors $G\tight\:\S\tight\to \CAT$ and $G\loose\:\S\loose\to \CAT$ together with a $2$-natural transformation
		\tr[2.8][4][0][0][l][-0.235][j_G][0.85]{\S\tight}{\S\loose}{\CAT}{J_\S}{G\tight}{G\loose}
		whose components are all full embeddings;
		\item[a loose morphism $G\aloose{}H$] is a $2$-natural transformation $\alpha\loose\:G\loose\aR{}H\loose\:\S\loose\to \CAT$;
		\item[a tight morphism $G\aR{}H$] is a loose one with tight components, that precisely means that it induces a $2$-natural transformation $\alpha\tight\:G\tight\aR{}H\tight$ such that
		\begin{eqD*}
		\begin{cd}*
			\S\tight \arrow[r,"{J_\S}"]\arrow[rd,bend right,"{G\tight}"'{inner sep=0.2ex},""{name=A}]\& \S\loose \arrow[d,bend right=40,"{G\loose}"'{pos=0.67,inner sep=0.2ex},""{name=B}]\arrow[d,bend left=43,"{H\loose}",""'{name=C}]\arrow[Rightarrow,from=A,"{j_G}"{inner sep=0.3ex,pos=0.4},shorten <=0.7ex,shorten >=2.5ex]\\
			\& \CAT
			\Arb[pos=0.45]{Rightarrow}{\alpha\loose}{B}{C}{0.2}
		\end{cd}\h[4]=\h[4]
		\begin{cd}*
			\S\tight \arrow[r,"{J_\S}"]\arrow[d,bend right=43,"{G\tight}"'{inner sep=0.35ex},""{name=A}]\arrow[d,bend left=40,"{H\tight}"{pos=0.65,inner sep=0.2ex},""'{name=B}]\& |[alias=K]|\S\loose \arrow[ld,bend left,"{H\loose}",""'{name=C}]\\
			\CAT
			\Arb[pos=0.48]{Rightarrow}{\alpha\tight}{A}{B}{0.2}
			\arrow[Rightarrow,from=B,to=K,"{j_H}"'{pos=0.37,inner sep=0.3ex},shorten <=1.35ex,shorten >=2.2ex]
		\end{cd}
		\end{eqD*}
		\item[a $2$-cell] is a modification between the loose morphisms.
	\end{description}
	
	Whereas the oplax version $\mFoplax{\S}{\F}$ can be characterized as follows:
	\begin{description}
		\item[an object] is an object of $\mF{\S}{\F}$, that we keep on viewing as a triangle above (in the description of $\mF{\S}{\F}$);
		\item[a loose morphism $G\aloose{}H$] is an oplax natural transformation $\alpha\loose\:G\loose\to H\loose$ that is (strictly) $2$-natural on tight morphisms, meaning that $\alpha\loose\ast J_\S$ is $2$-natural; we call them \dfn{marked oplax};
		\item[a tight morphism $G\aR{}H$] is a loose one with tight components, that precisely means that it induces a $2$-natural transformation $\alpha\tight\:G\tight\aR{}H\tight$ such that
		\begin{eqD*}
			\begin{cd}*
				\S\tight \arrow[r,"{J_\S}"]\arrow[rd,bend right,"{G\tight}"'{inner sep=0.2ex},""{name=A}]\& \S\loose \arrow[d,bend right=40,"{G\loose}"'{pos=0.67,inner sep=0.2ex},""{name=B}]\arrow[d,bend left=43,"{H\loose}",""'{name=C}]\arrow[Rightarrow,from=A,"{j_G}"{inner sep=0.3ex,pos=0.4},shorten <=0.7ex,shorten >=2.5ex]\\
				\& \CAT
				\Arb[pos=0.45]{Rightarrow}{\alpha\loose}{B}{C}{0.2}
			\end{cd}\h[4]=\h[4]
			\begin{cd}*
				\S\tight \arrow[r,"{J_\S}"]\arrow[d,bend right=43,"{G\tight}"'{inner sep=0.35ex},""{name=A}]\arrow[d,bend left=40,"{H\tight}"{pos=0.65,inner sep=0.2ex},""'{name=B}]\& |[alias=K]|\S\loose \arrow[ld,bend left,"{H\loose}",""'{name=C}]\\
				\CAT
				\Arb[pos=0.48]{Rightarrow}{\alpha\tight}{A}{B}{0.2}
				\arrow[Rightarrow,from=B,to=K,"{j_H}"'{pos=0.37,inner sep=0.3ex},shorten <=1.35ex,shorten >=2.2ex]
			\end{cd}
		\end{eqD*}
		\item[a $2$-cell] is modification between the loose morphisms.
	\end{description}
\end{rec}

\begin{defne}\label{deflaxadj}
	A lax adjunction is, for us, what Gray calls a strict weak quasi-adjunction in~\cite{gray_formalcattheory}. That is, a \dfn{lax adjunction} from a $2$-functor $F\:\A\to \B$ to a $2$-functor $U\:\B\to \A$ is given by a lax natural unit $\eta\:\Id{}\aR{} U\c F$, a lax natural counit $\eps\:F\c U\aR{} \Id{}$ and modifications
	\begin{eqD*}
	\begin{cd}*[4][4]
		F \arrow[rr,equal,"{}"] \arrow[rd,"{F\h\eta}"'{inner sep=0.2ex}]\&\hphantom{.}\arrow[d,Rightarrow,"{s}"{pos=0.38},shorten <= 0ex, shorten >=0.7ex]\& F \\
		\& F\c U\c F\arrow[ru,"{\eps\h F}"'{inner sep=0.2ex}]
	\end{cd}\quad\quad
	\begin{cd}*[4][4]
		\& U\c F\c U\arrow[rd,"{U\h\eps}"{inner sep=0.2ex}] \arrow[d,Rightarrow,"{t}"{pos=0.47},shorten <= 1.1ex,shorten >=1ex] \\
		U \arrow[rr,equal,"{}"] \arrow[ru,"{\eta\h U}"{inner sep=0.2ex}]\&\hphantom{.}\& U
	\end{cd}
	\end{eqD*}
	that express lax triangular laws, such that both the swallowtail modifications
	\begin{eqD*}
		\begin{cd}*[5.3][5.3]
			\id{} \arrow[d,"{\eta}"']\arrow[r,"{\eta}"]\& UF \arrow[ld,Rightarrow,"{\eta_\eta}",shorten <= 2.2ex,shorten >=1.6ex] \arrow[d,"{U\h[-1]F \eta}"]\arrow[rdd,equal,bend left,"{}"'{name=A}]\\
			UF \arrow[rrd,equal,bend right=29,"{}"{name=B}]\arrow[r,"{\eta U\h[-1]F}"'] \& UFUF\arrow[rd,"{U\eps F}"{inner sep=0.2ex}]\arrow[Rightarrow,from=A,"{Us}"{pos=0.65},shorten <=0.45ex,shorten >=-0.1ex]\arrow[Rightarrow,to = B,"{tF}"{pos=0.35},shorten <=0.9ex,shorten >=0.5ex] \\[-4ex]
			\& \&[-1ex] UF
		\end{cd}\quad
		\begin{cd}*[5.3][5.3]
			FU \arrow[rd,"{F\eta U}"{inner sep=0.2ex}]\arrow[rdd,equal,bend right,""{name=B}]\arrow[rrd,equal,bend left,"{}"'{name=A}]\&[-1ex] \\[-3.8ex]
			\& FUFU \arrow[Rightarrow,from=A,"{sU}"{pos=0.35},shorten <=0.75ex,shorten >=0.5ex]\arrow[Rightarrow,to = B,"{Ft}"'{pos=0.35},shorten <=0.4ex,shorten >=0ex] \arrow[r,"{\eps FU}"] \arrow[d,"{FU\eps}"']\& FU\arrow[d,"{\eps}"]\arrow[ld,Rightarrow,"{\eps_\eps}",shorten <= 2.2ex,shorten >=1.6ex]\\
			\& FU \arrow[r,"{\eps}"']\& \id{}
		\end{cd}
	\end{eqD*}
	 are identities.
	 
	 A \dfn{right-semi-lax adjunction} is a lax adjunction in which the counit $\eps$ is strictly $2$-natural and the modification $s$ is the identity.
	 
	 We call a lax adjunction \dfn{strict} when $s$ and $t$ are both identities, making the triangular laws to hold strictly.
\end{defne}

\begin{rec}\label{reclaxadjunction}
	Using lax comma objects, firstly introduced in Gray's~\cite{gray_formalcattheory} and refined in our~\cite{mesiti_twosetenrgrothconstrlaxnlimits}, we can reduce the study of lax adjunctions to ordinary adjunctions between hom-sets. Indeed, according to Gray, a lax adjunction is equivalently given by homomorphic $2$-adjoint functors
	\begin{cd}
		\laxslice{F}{\B}\arrow[r,bend left,"{S}",""{name=A}]\& \laxslice{\A}{U}\arrow[l,bend left,"{T}",""{name=B}]
		\Arb{adjunction}{}{A}{B}{0}
	\end{cd}
	over $\A\x \B$ with unit $\chi\:\id{}\aR{} T\c S$ and counit $\xi\:S\c T\aR{}\id{}$ (that are automatically $2$-natural if assumed natural) over $\A\x \B$. Here $S$ and $T$ homomorphic means that they are given uniquely by lax natural $\eta$ and $\eps$ (it can be defined precisely as in 1.5.10 of Gray's~\cite{gray_formalcattheory}, asking for example $T$ to transform precomposition with cells in $\A$ into precomposition with the image through $F$ of those cells; see also below how we produce $T$ from $\eps$).
	
	Strictness corresponds to $$\chi\ast i_F=\id{} \quad\h[4]\text{ and }\quad\h[4] \xi\ast i_U=\id{},$$
	where $i_F\:\A\to \laxslice{F}{\B}$ is the $2$-functor induced by $\id{F}$ and analogously for $i_U$.
	
	Such a $2$-adjunction $S\dashv T$ means, in particular, that we have ordinary adjunctions between homsets
	 \begin{cd}[6][6]
	 	\HomC{\B}{F(A)}{B}\arrow[r,bend left,"{S}",""{name=A}]\& \HomC{\A}{A}{U(B)}\arrow[l,bend left,"{T}",""{name=B}]
	 	\Arb{adjunction}{}{A}{B}{0}
	 \end{cd}
 	for every $A\in \A$ and $B\in \B$. And we can rephrase such ordinary adjunctions in terms of having universal units. The global adjunction $S\dashv T$ corresponds, then, to such units satisfying a broader universal property, that captures the possibility of $h\:F(A)\to B$ to vary in the whole lax comma object $\laxslice{F}{\B}$ rather than in just $\HomC{\B}{F(A)}{B}$. This is the idea behind \prox\ref{univmappingproplaxadj}, that shows the universal mapping property that characterizes lax adjunctions. Before that, it is helpful to see explicitly how a lax adjunction $(F,U,\eta,\eps,s,t)$ produces the adjunctions $(S,T,\chi,\xi)$ between the homsets on $A\in\A$ and $B\in\B$, as we will use this later. $S$ and $T$ are defined as usual as
 	$$S=(-\c\eta_A)\c U \qquad \quad T=(\eps_B\c -)\c F$$
 	And the lax naturality of $\eta$ and $\eps$ gives $\chi$ and $\xi$; precisely, given $h\:F(A)\to B$ in $\B$ and $k\:A\to U(B)$ in $\A$\v[-1]
 	$$\chi_h=\h[4]\begin{cd}*[1.1][1.5]
 		\&[1.5ex]\& F(A)\arrow[rd,"{h}"]\arrow[dd,Rightarrow,"{\eps_h}",shorten <=1.4ex,shorten >=1.3ex]\\
 		F(A) \arrow[rru,equal,bend left=30,"{}"'{name=A}]\arrow[r,"{F(\eta_A)}"']\& F(U(F(A)))\arrow[Rightarrow,from=A,"{s_A}"{pos=0.7},shorten <=0.5ex,shorten >=0.4ex]\arrow[ru,"{\eps_{F(A)}}"{pos=0.63,inner sep=0.15ex}]\arrow[rd,"{F(U(h))}"'{pos=0.63,inner sep=0.2ex}]\&\& B\\
 		\&\&F(U(B))\arrow[ru,"{\eps_B}"']
 	\end{cd}\v[-1]$$
 	$$\xi_k=\h[4]\begin{cd}*[1.1][1.5]
 		\& U(F(A))\arrow[dd,Rightarrow,"{\eta_k}",shorten <=1.4ex,shorten >=1.3ex]\arrow[rd,"{U(F(k))}"{pos=0.37,inner sep=0.2ex}]\\
 		A \arrow[ru,"{\eta_A}"{inner sep=0.25ex}]\arrow[rd,"{k}"']\&\& |[alias=K]|U(F(U(B)))\arrow[r,"{U(\eps_B)}"] \&[1.5ex] U(B)\\
 		\& U(B) \arrow[ru,"{\eta_{U(B)}}"'{pos=0.37}]\arrow[rru,equal,bend right=26,"{}"{name=B}]
 		\arrow[Rightarrow,from=K,to=B,"{t_B}"{pos=0.73},shorten <=0.5ex,shorten >=0.4ex]
 	\end{cd}\v[1]$$
 
 	In particular, we see that for a right-semi-lax adjunction we obtain $\chi=\id{}$ and then $T\c S=\Id{}$ (this is where the name comes from).
\end{rec}

\begin{prop}[dual of Proposition~I,7.8.2 in Gray's~\cite{gray_formalcattheory} and of Theorem~4.1 in Bunge's~\cite{bunge_coherentextrelationalalg}]\label{univmappingproplaxadj}
	Let $F\:\A\to \B$ be a $2$-functor. Suppose that for every $B\in \B$ there is an object $U(B)\in \A$ and a morphism $\eps_B\:F(U(B))\to B$ in $\B$ that is universal in the following sense: for every $h\:F(A)\to B$ in $\B$ there is an $\ov{h}\:A\to U(B)$ in $\A$ and a $2$-cell
	\begin{cd}[6][8]
		F(A) \arrow[rd,bend left,"{h}",""'{name=C}]\arrow[d,"{F(\ov{h})}"'] \\
		F(U(B))\arrow[r,"{\eps_B}"'] \arrow[Rightarrow,from=C,"{\lambda_h}",shorten <=1ex,shorten >= 1.3ex]\& B
	\end{cd}
	in $\B$ such that, given any other $g\:A\to U(B)$ and $\sigma\:h\aR{}\eps_B\c F(g)$, there is a unique $\delta\:\ov{h}\aR{}g$ such that
	$$\begin{cd}*[6][8]
		F(A) \arrow[rd,bend left,"{h}",""'{name=C}]\arrow[d,bend left=20,"{F(\ov{h})}"{pos=0.44},""'{name=A}]\arrow[d,bend right=60,"{F(g)}"',""{name=B},shorten <=-0.8ex,shorten >=0.2ex] \\
		F(U(B))\arrow[r,"{\eps_B}"'] \arrow[Rightarrow,from=C,"{\lambda_h}",shorten <=1ex,shorten >= 1.3ex]\& B
		\Arb[inner sep=0.55ex]{Rightarrow,shift right=0.2ex}{F(\delta)}{A}{B}{0.2}
	\end{cd}=\h[4]\begin{cd}*[6][8]
		F(A) \arrow[rd,bend left,"{h}",""'{name=C}]\arrow[d,"{F(g)}"'] \\
		F(U(B))\arrow[r,"{\eps_B}"'] \arrow[Rightarrow,from=C,"{\sigma}",shorten <=1ex,shorten >= 1.3ex]\& B
	\end{cd}\v[2]$$
	Assume then that, for every $h\:F(A)\to B$ in $\B$, we have $\ov{h}=\ov{h\c\h\eps_{F(A)}}\c \ov{\id{F(A)}}$ and
	\begin{equation}\label{assumptioncharactlaxadj}\begin{cd}*[5][8]
		F(A)\arrow[rd,equal,bend left,"{}"'{name=A}] \arrow[d,"{F(\ov{\id{}})}"'] \\
		F(U(F(A))) \arrow[Rightarrow,from=A,"{\lambda_{\id{}}}",shorten <=1.3ex,shorten >= 1.6ex]\arrow[d,"{F(\ov{h\c\h \eps_{F(A)}})}"']\arrow[r,"{\eps_{F(A)}}"']\& F(A)\arrow[d,"{h}"] \arrow[ld,Rightarrow,"{\lambda_{h\c\h\eps_{F(A)}}}",shorten <=1.9ex,shorten >=1.6ex]\\
		F(U(B))\arrow[r,"{\eps_B}"']\& B		
	\end{cd}=\begin{cd}*[6][8]
		F(A) \arrow[rd,bend left,"{h}",""'{name=C}]\arrow[d,"{F(\ov{h})}"'] \\
		F(U(B))\arrow[r,"{\eps_B}"'] \arrow[Rightarrow,from=C,"{\lambda_h}",shorten <=1ex,shorten >= 1.3ex]\& B
	\end{cd}
	\end{equation}
	and also that, for every $B\in \B$, we have $\ov{\eps_B}=\id{}$ and $\lambda_{\eps_{B}}=\id{}$.
	
	Then $U$ extends to an oplax functor, $\eps$ extends to a lax natural transformation and there exist a lax natural transformation $\eta$ and modifications $s,t$ such that $U$ is a lax right-adjoint to $F$, except that in general $U$ is only an oplax functor \pteor{and the swallowtail identities need to be slightly modified accordingly}. 
	
	In particular, if $\lambda_h=\id{}$ for every $h\:F(A)\to B$, we obtain a right-semi-lax adjunction.
\end{prop}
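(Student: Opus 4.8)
The plan is to assemble the pointwise universal data into a global lax adjunction, following the dualization of Gray's Proposition~I,7.8.2 and Bunge's Theorem~4.1 and using the correspondence recalled in \recx\ref{reclaxadjunction}. The first observation is that the assumed universal property says precisely that, for each $A\in\A$ and $B\in\B$, the functor $T_{A,B}\deq(\eps_B\c-)\c F\:\HomC{\A}{A}{U(B)}\to\HomC{\B}{F(A)}{B}$ admits a left adjoint $S_{A,B}\deq\ov{(-)}$ with unit $\lambda$: indeed $(\ov h,\lambda_h)$ is exactly a universal arrow from $h$ to $T_{A,B}$. The content of the proposition is then to promote this family of hom-adjunctions to an oplax functor $U$ together with lax natural $\eps,\eta$ and modifications $s,t$.

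First I would fix the structure of $U$. On a morphism $b\:B\to B'$ set $U(b)\deq\ov{b\c\eps_B}$, and on a $2$-cell $\beta\:b\aR{}b'$ define $U(\beta)$ to be the unique $2$-cell supplied by the universal property of $\lambda_{b\c\eps_B}$ that satisfies $(\eps_{B'}\ast F(U(\beta)))\c\lambda_{b\c\eps_B}=\lambda_{b'\c\eps_B}\c(\beta\ast\eps_B)$. The hypothesis $\ov{\eps_B}=\id{}$ gives $U(\id_B)=\id{U(B)}$ strictly, so the unit constraint of $U$ is trivial; the composition constraint $U(b'\c b)\aR{}U(b')\c U(b)$ (the oplax direction) is extracted, via the universal property of $\lambda_{b'b\c\eps_B}$, from the pasting of $b'\ast\lambda_{b\c\eps_B}$ with $\lambda_{b'\c\eps_{B'}}\ast F(U(b))$. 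Then $\eps$ is made lax natural by declaring its structure $2$-cell on $b$ to be $\lambda_{b\c\eps_B}\:b\c\eps_B\aR{}\eps_{B'}\c F(U(b))$, whose normalization on identities uses $\lambda_{\eps_B}=\id{}$. For the unit I would set $\eta_A\deq\ov{\id{F(A)}}$, carrying the $2$-cell $\lambda_{\id{F(A)}}$ as its data; the two displayed hypotheses are then exactly what identify $S_{A,B}=\ov{(-)}$ with $(-\c\eta_A)\c U$, both at the level of morphisms ($\ov h=\ov{h\c\eps_{F(A)}}\c\ov{\id{F(A)}}=U(h)\c\eta_A$) and at the level of the unit $2$-cells (equation~\eqref{assumptioncharactlaxadj}), which is precisely the homomorphy required in \recx\ref{reclaxadjunction}.

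With $U$, $\eps$, $\eta$ in place, I would produce the lax triangle modifications $s$ and $t$ and verify the (suitably modified) swallowtail identities. The uniform technique throughout is that any required equation of $2$-cells in $\A$ --- oplax functoriality of $U$ (interchange, associativity of the composition constraint, functoriality on $2$-cells), lax naturality of $\eps$ and $\eta$, the modification axioms for $s,t$, and the swallowtails --- becomes, after whiskering by $T_{A,B}=\eps_B\c F(-)$ and composing with the universal unit $\lambda$, an equation of $2$-cells in $\B$ that reduces to the decomposition hypotheses $\ov h=\ov{h\c\eps_{F(A)}}\c\ov{\id{F(A)}}$ and equation~\eqref{assumptioncharactlaxadj} together with $\ov{\eps_B}=\id{}$ and $\lambda_{\eps_B}=\id{}$; the uniqueness clause of the universal property then lifts each such equation back to $\A$. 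I expect this coherence bookkeeping to be the main obstacle: there is no conceptual difficulty, but one must organize the verifications so that the only nontrivial inputs are the displayed hypotheses, and one must keep careful track of the fact that $U$ is merely oplax, which is exactly what forces the swallowtail identities to appear in their modified form.

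Finally, for the last assertion I would observe that if $\lambda_h=\id{}$ for every $h$, then in particular the structure $2$-cell $\lambda_{b\c\eps_B}$ of $\eps$ is an identity, so $\eps$ is strictly $2$-natural; tracing the construction of $s$ through the universal property shows that $s$ is likewise an identity, and the induced hom-level unit $\chi$ of \recx\ref{reclaxadjunction} vanishes. By \defx\ref{deflaxadj} this is exactly a right semi-lax adjunction, completing the proof.
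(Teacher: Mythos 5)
Your construction matches the paper's own proof essentially verbatim: the same definitions $U(b)\deq\ov{b\c\eps_B}$, $\eps_b\deq\lambda_{b\c\eps_B}$, $U(\beta)$ induced by the universal property, $\eta_A\deq\ov{\id{F(A)}}$ with $s_A=\lambda_{\id{F(A)}}$, the coassociator and counitor obtained by pasting, and $t_B$ induced from the identity $2$-cell on $\eps_B$ via equation~\refs{assumptioncharactlaxadj}. The paper likewise records only the constructions and delegates the coherence verifications to the uniqueness clause of the universal property, so your account — including the hom-adjunction reading of the hypotheses via \recx\ref{reclaxadjunction} and the final observation that $\lambda=\id{}$ forces $\eps$ strict and $s=\id{}$ — is the same argument, stated if anything slightly more explicitly.
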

\begin{proof}[Proof \pteor{constructions}]
	Given $g\:B\to B'$ in $\B$, define $U(g)\deq \ov{g\c \eps_B}$ and $\eps_{g}\deq \lambda_{g\c \h\eps_B}$. Given a $2$-cell $\mu\:g\aR{}g'$ in $\B$, define $U(\mu)$ as the unique $2$-cell induced by $\eps_{g'}\c \mu\h\eps_B$. Given composable morphisms $g$ and $g'$ in $\B$, pasting $\eps_{g}$ and $\eps_{g'}$ induces a unique coassociator for $U$, while the identity $2$-cell induces a unique counitor.
	
	We then define $\eta_A\deq \ov{\id{F(A)}}$ and $s_A\deq \lambda_{\id{F(A)}}$.
	\begin{cd}[6][8]
		F(A) \arrow[rd,bend left,equal,"{}",""'{name=C}]\arrow[d,"{F(\eta_A)}"'] \\
		F(U(F(A)))\arrow[r,"{\eps_{F(A)}}"'] \arrow[Rightarrow,from=C,"{s_A}",shorten <=1ex,shorten >= 1.3ex]\& F(A)
	\end{cd}
	And for every $f\:A\to A'$ in $\A$ we take $\eta_{f}$ to be the unique $2$-cell that is induced from
	\begin{cd}[5][8]
		F(A)\arrow[rd,equal,bend left,"{}"'{name=A}] \arrow[d,"{F(\eta_A)}"'] \\
		F(U(F(A))) \arrow[Rightarrow,from=A,"{s_A}",shorten <=1.3ex,shorten >= 1.6ex]\arrow[d,"{F(U(F(f)))}"']\arrow[r,"{\eps_{F(A)}}"']\& F(A)\arrow[d,"{F(f)}"] \arrow[ld,Rightarrow,"{\eps_{F(f)}}",shorten <=1.9ex,shorten >=1.6ex]\\
		F(U(F(A')))\arrow[r,"{\eps_{F(A')}}"']\& F(A')		
	\end{cd}
	considering $s_{A'}\ast F(f)$, thanks to the assumption in equation~\refs{assumptioncharactlaxadj}. Finally, for every $B\in \B$, we induce $t_B$ from the identity $2$-cell on $\eps_B$, using again the assumption in equation~\refs{assumptioncharactlaxadj}, with $h=\eps_B$.
\end{proof}

\begin{rem}
	In our two examples, i.e.\ with $F$ equal to $\dom\:\laxslice{\E}{M}\to\E$ (\thex\ref{teordomhassemilaxFadjoint}) and to the $2$-functor of pullback along a split Grothendieck opfibration between lax slices of $\CAT$ (\thex\ref{teortausthasadjoint}), \prox\ref{univmappingproplaxadj} will produce a strict right-semi-lax adjunction between $2$-functors. But this would not be enough to guarantee preservation of colimits. It is enough if the right-semi-lax adjunction is $\F$-categorical, as we prove in \thex\ref{teorsemilaxFadjpreserve}. But we have to restrict the attention to the \predfn{\pteor{tight} strict/oplax $\F$-colimits}, defined in \defx\ref{defsoplaxFcolimit} (that we believe are the suitable colimits to consider in this context).
	
	The concept of lax $\F$-adjunction appears in Walker's~\cite{walker_laxfamilialrepreslaxgenfact}, but in a pseudo/lax version and with the stronger request that $s$ and $t$ are isomorphisms. Moreover, only what for us is the tight version is considered there, asking the unit $\eta$ and the counit $\eps$ to be (tight) pseudo/oplax $\F$-natural rather than loose ones. The latter request means that $\eta$ and $\eps$ are tight morphisms in some $\mFoplax{\S}{\S}$ of \recx\ref{recFcategorytheory} rather than loose ones. Such request is not necessary to guarantee the preservation of the ``loose part" of tight strict/oplax $\F$-colimits. Moreover, it is not satisfied by our example of change of base along a split Grothendieck opfibration between lax slices.
\end{rem}

\begin{defne}\label{deflaxFadj}
	A \dfn{loose lax $\F$-adjunction} is a lax adjunction $(F,U,\eta,\eps,s,t)$ between the loose parts in which $F$ and $U$ are $\F$-functors and $\eta$ and $\eps$ are loose strict/lax $\F$-natural transformations (i.e.\ loose morphisms in $\mFlax{\S}{\S}$ of \recx\ref{recFcategorytheory} for suitable $\S$).\v[0.5]
	
	A \dfn{\pteor{tight} lax $\F$-adjunction} is a loose one such that $\eta$ and $\eps$ are (tight) strict/lax $\F$-natural transformations (that is, have tight components).
	
	A \dfn{right-semi-lax loose $\F$-adjunction} is a loose lax $\F$-adjunction such that $\eps$ is strictly $2$-natural (i.e.\ a loose morphism in $\mF{\S}{\S}$ of \recx\ref{recFcategorytheory}) and $s$ is the identity.\v[0.5]
	
	We call a loose lax $\F$-adjunction \dfn{strict} if both $s$ and $t$ are identities.
\end{defne}

\begin{defne}\label{defsoplaxFcolimit}
	Let $\A$ be a small $\F$-category and consider $\F$-functors $W\:\A\op\to \F$ (the weight) and $H\:\A\to \S$ (the $\F$-diagram). The \dfn{strict/oplax $\F$-colimit of $H$ weighted by $W$}, denoted as $\oplaxFcolim{W}{H}$, is (if it exists) an object $C\in \S$ together with an isomorphism in $\F$
	$$\HomC{\S}{C}{Q}\iso \HomC{\mFoplax{\A\op}{\F}}{W}{\HomC{\S}{H(-)}{Q}}$$
	$\F$-natural in $Q\in \S$,\v where $\mFoplax{\A\op}{\F}$ is the $\F$-category defined in \recx\ref{recFcategorytheory}.
\end{defne}
	
\begin{rem}\label{remunivcoconetightjoitnlydetect}
	The natural isomorphism of \defx\ref{defsoplaxFcolimit} is equivalently a $2$-natural isomorphism between the loose parts, that is,
	\begin{equation}\label{eqstrictoplaxFcolim}
		\HomC{\S\loose}{C}{Q}\iso \HomC{\moplaxm{\A\loose\op}{\CAT}}{W\loose}{\HomC{\S\loose}{H\loose(-)}{Q}},
	\end{equation}
	where the right hand side denotes the marked oplax transformations, that restricts to a $2$-natural isomorphism between the tight parts. Such tight parts are respectively $\HomC{\S\tight}{C}{Q}$ and those marked oplax natural transformations $\alpha\loose$ that restrict to $2$-natural ones $\alpha\tight\:W\tight\aR{}\HomC{\S\tight}{H\tight(-)}{Q}$, i.e.\ those forming a commutative square
	\begin{cd}[6][6]
		W\tight(A) \arrow[r,"{{(\alpha\tight)}_A}"]\arrow[d,"{{\left(j_W\right)}_A}"']\& \HomC{\S\tight}{H\tight(A)}{Q}\arrow[d,"{{\left(J_\S\ast H\tight\right)}_A}"]\\
		W\loose(A) \arrow[r,"{{(\alpha\loose)}_A}"']\& \HomC{\S\loose}{H\loose(A)}{Q}
	\end{cd}
	for every $A\in \A$ (where $J_\S\:\HomC{\S\tight}{-}{Q}\aR{}\HomC{\S\loose}{-}{Q}\c J_\S$)
	
	Remember that identities are always tight and tight morphisms are closed under composition. So the request that the $2$-natural isomorphism of equation~\refs{eqstrictoplaxFcolim} restricts to one between the tight parts equivalently means that the universal marked oplax $2$-cocylinder $\mu^\lambda$ (corresponding to $\id{C}$) satisfies the following two conditions. That for every $A\in \A$ and $X\in W\tight(A)$, the morphism
	$$\mu^\lambda_A(X)\:H(A)\to C$$
	is tight, and that for every $q\:C\to Q$ in $\S$, if $q\c \mu^\lambda_A(X)$ is tight for every $A\in \A$ and $X\in W\tight(A)$ then $q$ needs to be tight. We say that the \dfn{\pteor{cocylinder} $\tau$-components} $\mu^\lambda_A(X)$'s are tight and \dfn{jointly detect tightness}.
\end{rem}
	
\begin{prop}\label{charactsoplaxFcolimit}
	Let $\A$ be a small $\F$-category and consider $\F$-functors $W\:\A\op\to \F$ \pteor{the weight} and $H\:\A\to \S$ \pteor{the $\F$-diagram}. The strict/oplax $\F$-colimit of $H$ weighted by $W$ is, equivalently, an object $C\in \S$ together with a marked oplax $2$-cocylinder
	$$\mu^\lambda\:W\loose\aoplaxm{}\HomC{\S\loose}{H\loose(-)}{C}$$
	that is universal in the $2$-categorical sense, giving a $2$-natural isomorphism
	$$\HomC{\S\loose}{C}{Q}\iso \HomC{\moplaxm{\A\loose\op}{\CAT}}{W\loose}{\HomC{\S\loose}{H\loose(-)}{Q}},$$
	and has $\tau$-components that are tight and jointly detect tightness.
\end{prop}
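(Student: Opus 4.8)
The plan is to derive the statement directly from \remx\ref{remunivcoconetightjoitnlydetect}, splitting the datum of the $\F$-natural isomorphism of \defx\ref{defsoplaxFcolimit} into a loose half, governed by ordinary weighted $2$-colimit theory, and a tight half, governed by a universal-element argument. Recall that an object of $\F$ is an injective-on-objects, fully faithful functor, so an isomorphism in $\F$ between two such full embeddings is exactly an isomorphism of their loose (codomain) parts carrying the tight (domain) subcategory of the one isomorphically onto that of the other; equivalently, a $2$-natural isomorphism of the loose parts that \emph{restricts} to a $2$-natural isomorphism of the tight parts. I would first invoke this to rewrite the isomorphism of \defx\ref{defsoplaxFcolimit} as the $2$-natural isomorphism~\refs{eqstrictoplaxFcolim} of loose parts restricting to one of tight parts, exactly as spelled out in \remx\ref{remunivcoconetightjoitnlydetect}.

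Next I would dispose of the loose half. The isomorphism
$$\HomC{\S\loose}{C}{Q}\iso \HomC{\moplaxn{\A\loose\op}{\CAT}}{W\loose}{\HomC{\S\loose}{H\loose(-)}{Q}},$$
$2$-natural in $Q\in\S\loose$, is precisely the representability, in the ordinary $2$-category $\S\loose$, of the colimit of $H\loose$ weighted by $W\loose$ with oplax normal cocylinders. By the ($\CAT$-enriched) Yoneda lemma such a $2$-natural isomorphism is equivalent to a single universal element, namely the image $\mu^\lambda$ of $\id{C}$, which is an oplax normal $2$-cocylinder $\mu^\lambda\:W\loose\aoplaxn{}\HomC{\S\loose}{H\loose(-)}{C}$; the isomorphism then acts by postcomposition $q\mapsto(q\c-)\c\mu^\lambda$, its being invertible for every $Q$ being exactly the $2$-categorical universality of $\mu^\lambda$. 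This identifies the loose datum with a universal oplax normal $2$-cocylinder.

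The heart of the matter is translating the residual condition that the isomorphism restrict to the tight parts. Evaluating at $Q=C$, the tight morphism $\id{C}$ must be sent to a tight morphism of $\mFoplax{\A\op}{\F}$; since its image is $\mu^\lambda$, the description of tight morphisms in \recx\ref{recFcategorytheory} says precisely that each $\tau$-component $\mu^\lambda_A(X)\:H(A)\to C$, with $A\in\A$ and $X\in W\tight(A)$, is tight. Granting this, the isomorphism automatically preserves tightness, since for tight $q\:C\to Q$ the components $q\c\mu^\lambda_A(X)$ of its image are composites of tight morphisms; thus restricting to the tight parts reduces to the \emph{reflection} of tightness, namely that $q$ is tight whenever all the $q\c\mu^\lambda_A(X)$ (for $X\in W\tight(A)$) are tight. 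This is exactly the requirement that the $\tau$-components jointly detect tightness. Combining the loose datum with these two conditions yields the asserted description. The only genuine content here is this passage between the full-embedding structure of $\F$ and the twin conditions on the $\tau$-components; I expect it to be a matter of careful bookkeeping rather than a real obstacle, the loose half being entirely standard.
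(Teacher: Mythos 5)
Your proposal is correct and follows essentially the same route as the paper: the paper's proof simply declares the statement clear after \remx\ref{remunivcoconetightjoitnlydetect}, which already contains exactly your decomposition (loose half as a $2$-categorically universal cocylinder via the enriched Yoneda lemma, tight half reduced to the $\tau$-components being tight and jointly detecting tightness, using that identities are tight and tight morphisms compose). You have merely written out the bookkeeping that the paper leaves implicit.
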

\begin{proof}
	The proof is clear after \remx\ref{remunivcoconetightjoitnlydetect}. Since the loose limit is $2$-categorical, it can indeed be characterized as having a universal marked oplax $2$-cocylinder.
\end{proof}

\begin{defne}
	We call a strict/oplax $\F$-colimit \dfn{tight} if it is exhibited by a marked oplax $2$-cocylinder $\mu^\lambda$ as in \prox\ref{charactsoplaxFcolimit} such that all \dfn{cocylinder $\lambda$-components} $\mu^\lambda_A(X)$, for $A\in \A$ and $X\in W\loose(A)$, are tight. We will see that this condition is automatic in the case of cartesian-marked oplax $2$-cocones, that is the one we are mostly interested in, as every weighted $2$-cocylinder can be reduced to one of this form.
\end{defne}

\begin{rem}
	We are now ready to prove that having a right-semi-lax right $\F$-adjoint guarantees the preservation of all tight strict/oplax $\F$-colimits. We will actually see that the property of a universal marked oplax $2$-cocylinder to have $\tau$-components that jointly detect tightness is preserved by a right-semi-lax (tight) left $\F$-adjoint, but not necessary to prove the preservation of the rest of the structure, for which a loose adjunction is enough.
	
	The following theorem does not seem to appear in the literature.
\end{rem}

\begin{teor}\label{teorsemilaxFadjpreserve}
	Right-semi-lax loose left $\F$-adjoints preserve all the universal marked oplax $2$-cocylinders for an $\F$-diagram which have tight $\lambda$-components \pteor{i.e.\ in some sense the ``loose part" of all the tight strict/oplax $\F$-colimits, even if the $\tau$-components do not jointly detect tightness}.
	
	Right-semi-lax \pteor{tight} left $\F$-adjoints preserve all tight strict/oplax $\F$-colimits.
\end{teor}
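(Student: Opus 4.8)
The plan is to run the classical ``left adjoints preserve colimits'' argument, but with the hom-set bijections replaced by the hom-category adjunctions $S\dashv T$ of \recx\ref{reclaxadjunction} and with the strictness on one side used to turn the relevant comparison into an honest isomorphism. Let $F\:\S\to\T$ be the left $\F$-adjoint, $U\:\T\to\S$ its right semi-lax right $\F$-adjoint with unit $\eta$ and counit $\eps$, let $H\:\A\to\S$ be the $\F$-diagram, and let $\mu\:W\loose\aoplaxn{}\HomC{\S\loose}{H\loose(-)}{C}$ be the universal oplax normal $2$-cocylinder of \prox\ref{charactsoplaxFcolimit} exhibiting $C=\oplaxFcolim{W}{H}$, with tight $\lambda$-components $\mu_A(X)\:H(A)\to C$. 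Fixing $Q\in\T$, I must show that the comparison functor
\[
\Phi_Q\:\HomC{\T\loose}{F(C)}{Q}\too\HomC{\moplaxn{\A\loose\op}{\CAT}}{W\loose}{\HomC{\T\loose}{(F\c H)\loose(-)}{Q}},\qquad q\mto (q\ast-)\c(F\ast\mu),
\]
is an isomorphism of categories, $2$-naturally in $Q$; this says exactly that $F\ast\mu$ is a universal oplax normal $2$-cocylinder.

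Two transport operations on cocylinders drive the proof. Recall from \recx\ref{reclaxadjunction} that $S=(-\c\eta_{(-)})\c U$ and $T=(\eps_Q\c-)\c F$, and that the right semi-lax hypothesis ($\eps$ strictly $2$-natural, $s=\id{}$) forces $\chi=\id{}$, hence $T\c S=\Id{}$. Whiskering by $T$ gives $T_*$: since $F$ is a strict $2$-functor and $\eps_Q$ is fixed, $T$ is \emph{strictly} natural in the domain variable, so $T_*$ sends oplax normal cocylinders to oplax normal ones. Whiskering by $S$ gives $S_*$: here $S$ is only lax natural in the domain variable, its structure $2$-cells built from those of $\eta$; but $\eta$ is loose strict/lax $\F$-natural (\defx\ref{deflaxFadj}) and $H$ is an $\F$-functor, so those structure $2$-cells vanish on the tight morphisms $H(f)$ and $S_*$ again preserves normality. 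From $T\c S=\Id{}$ one reads off $T_*\c S_*=\Id{}$.

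The heart of the argument is four functor identities, checked first on objects and then valid verbatim on $2$-cells by functoriality. Writing $\Theta_Q$ for the isomorphism $\HomC{\S\loose}{C}{U(Q)}\iso\HomC{\moplaxn{\A\loose\op}{\CAT}}{W\loose}{\HomC{\S\loose}{H\loose(-)}{U(Q)}}$ furnished by the universality of $\mu$ at $U(Q)\in\S$, these are
\[
\Phi_Q=T_*\c\Theta_Q\c S,\qquad \Phi_Q\c T=T_*\c\Theta_Q,\qquad S_*\c\Phi_Q=\Theta_Q\c S,\qquad T_*\c S_*=\Id{}.
\]
The first two use only $T\c S=\Id{}$ and functoriality of $F$ (via $\eps_Q\c F(U(q))\c F(\eta_C)=T(S(q))=q$). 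The third is where the tight $\lambda$-components enter decisively: it needs $\eta_C\c\mu_A(X)=U\h[-1]F(\mu_A(X))\c\eta_{H(A)}$, i.e.\ strict naturality of $\eta$ at $\mu_A(X)$, which holds precisely because $\mu_A(X)$ is tight and $\eta$ is $\F$-natural. Granting these, $\Phi_Q$ is injective on objects and faithful because $S_*\c\Phi_Q=\Theta_Q\c S$ is (an isomorphism composed with the fully faithful, injective-on-objects $S$); it is surjective on objects because $\Phi_Q\big(T(\Theta_Q^{-1}(S_*\nu))\big)=T_*(S_*\nu)=\nu$ for every cocylinder $\nu$; and it is full by lifting a modification $\Xi$ through $\Theta_Q^{-1}\c S_*$ and the fullness of $S$, recognising the lift via faithfulness of $S_*$. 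Hence $\Phi_Q$ is an isomorphism, naturally in $Q$, which proves the first assertion.

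For the second assertion I would upgrade the universal cocylinder $F\ast\mu$ to a genuine $\F$-colimit by checking, as in \remx\ref{remunivcoconetightjoitnlydetect}, that its $\tau$-components are tight and jointly detect tightness. Tightness of $F(\mu_A(X))$ is immediate since $F$ preserves tightness. For joint detection, suppose $q\c F(\mu_A(X))$ is tight for all $A$ and $X\in W\tight(A)$; using again the strict naturality of $\eta$ at the tight $\mu_A(X)$ one gets $S(q)\c\mu_A(X)=S\big(q\c F(\mu_A(X))\big)$, which is tight because $U$ preserves tightness and $\eta_{H(A)}$ is tight in a tight $\F$-adjunction, so $S(q)$ is tight by the joint detection of the $\tau$-components of $\mu$ in $\S$, whence $q=\eps_Q\c F(S(q))$ is tight since $\eps_Q$ is tight and $F$ preserves tightness. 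I expect the main obstacle to be the bookkeeping in establishing that $S_*$ preserves normality and in verifying the third identity on $2$-cells: these are exactly the points where the $\F$-categoricity of the lax adjunction is used to force the offending structure $2$-cells of $\eta$ to be identities.
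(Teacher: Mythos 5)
Your proposal is correct and follows essentially the same route as the paper: the identities you isolate ($T_*\c S_*=\Id{}$, $\Phi_Q\c T=T_*\c\Theta_Q$, and above all $S_*\c\Phi_Q=\Theta_Q\c S$, which is exactly where the tightness of the $\lambda$-components forces the structure $2$-cells of $\eta$ to vanish) are precisely the computations the paper carries out when it verifies the one- and two-dimensional universal properties of $F\c\mu$ by hand, and your treatment of the joint detection of tightness matches the paper's argument step for step. The only difference is presentational: you package everything as an isomorphism of comparison functors between hom-categories, while the paper chases existence and uniqueness of the induced $1$- and $2$-cells directly.
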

\begin{proof}
	Let $(F,U,\eta,\eps,s,t)$ be a right-semi-lax loose $\F$-adjunction between $\F$-categories $\S$ and $\E$. That is, a lax adjunction between the loose parts where $F$ and $U$ are $\F$-functors, $\eta$ is a loose strict/lax $\F$-natural transformation, $\eps$ is strictly $2$-natural and $s$ is the identity. Let then $\A$ be a small $\F$-category and consider $\F$-functors $W\:\A\op\to \F$ and $H\:\A\to \S$ such that the strict/oplax $\F$-colimit of $H$ weighted by $W$ exists in $\S$ and is tight. Call $C$ such colimit; we want to show that $F$ preserves it. By \prox\ref{charactsoplaxFcolimit}, it suffices to consider the universal marked oplax $2$-cocylinder
	$$\mu^\lambda\:W\loose\aoplaxm{}\HomC{\S\loose}{H\loose(-)}{C}$$
	with tight $\lambda$-components that exhibits $C=\oplaxFcolim{W}{H}$ and the marked oplax $2$-cocylinder $F\c \mu^\lambda$.
	
	We prove that $F\c \mu^\lambda$ is universal in the $2$-categorical sense and such that the $F(\mu^\lambda_A(X))$'s, for $A\in \A$ and $X\in W\loose(A)$, are all tight, without using that the $\tau$-components jointly detect tightness. Moreover, we show that if $\eta$ and $\eps$ have tight components, giving a right-semi-lax (tight) $\F$-adjunction, then having $\tau$-components that jointly detect tightness is preserved as well.
	
	Since a right-semi-lax loose $\F$-adjunction is in particular a right-semi-lax adjunction between the loose parts, we know by \recx\ref{reclaxadjunction} that $(F,U,\eta,\eps,\id{},t)$ induces an adjunction between homsets
	\begin{cd}[6][6]
		\HomC{\E\loose}{F\loose(Y)}{Z}\arrow[r,bend left,"{S_{Y,Z}}",""{name=A}]\& \HomC{\S\loose}{Y}{U\loose(Z)}\arrow[l,bend left,"{T_{Y,Z}}",""{name=B}]
		\Arb{adjunction}{}{A}{B}{0}
	\end{cd}
	for every $Y\in \S$ and $Z\in \E$ with unit the identity, showing $T\c S=\Id{}$, and counit $\xi\:S\c T\aR{}\id{}$. The strategy will be to make use of the equality $T\c S=\Id{}$ to move back and forth between $\E$ and $\S$, recovering, after $T\c S$, the original starting data of $\E$ but with new information gathered in $\S$.
	
	The $\lambda$-components $F(\mu^\lambda_A(X))$'s are surely tight, since $F$ is an $\F$-functor. Take then $q\:F(C)\to Z$ in $\E$ such that $q\c F(\mu^\lambda_A(X))$ is tight for every $A\in \A$ and every $X\in W\tight(A)$. We show that if $\eta$ and $\eps$ have tight components, then $q$ needs to be tight as well. Notice that $S_{-,Z}$ is marked oplax natural in $Y\in \S\op\loose$, with structure $2$-cell on $y\:Y\al{} Y'$ in $\S\loose$  given by $(-\ast \eta_{y})\ast U$, since $\eta$ is loose strict/lax $\F$-natural. Moreover, if $\eta$ has tight components then $S_{-,Z}$ is tight as well, since $U$ is an $\F$-functor. Since $\mu^\lambda_A(X)\:H(A)\to C$ is tight, we obtain $S_{\mu^\lambda_A(X),Z}=\id{}$ and hence
	$$S_{H(A),Z}(q\c F(\mu^\lambda_A(X)))=S_{C,Z}(q)\c \mu^\lambda_A(X)$$
	So if $\eta$ is tight then the left hand side of the equality here above is tight, and since the $\mu^\lambda_A(X)$'s jointly detect tightness we obtain that $S_{C,Z}(q)$ is tight. If we also assume that $\eps$ is tight, then $T_{C,Z}$ is tight, whence $q=T(S(q))$ is tight.
	
	We now prove that $F\c \mu^\lambda$ is universal, assuming only a right-semi-lax loose $\F$-adjunction and never using that the $\tau$-components $\mu^\lambda_A(X)$ jointly detect tightness. Everything below will be loose, so we abuse the notation dropping the loose subscripts. The following figure condenses the strategy.
	\begin{eqD}{diagrcondensesprescolim}
	\begin{cd}*[3.5][3.5]
		W\arrow[d,Rightarrow,"{\sigma}"']\arrow[r,Rightarrow,"{\mu}"] \& \HomC{\S}{H(-)}{C}\arrow[d,Rightarrow,dashed,"{\gamma\c -}"] \arrow[r,Rightarrow,"{F}"]\& \HomC{\E}{(F\c H)(-)}{F(C)} \arrow[d,dashed,Rightarrow,"{T(\gamma)\c -}"]\\
		\HomC{\E}{(F\c H)(-)}{Z} \arrow[r,Rightarrow,"{S}"']\& \HomC{\S}{H(-)}{U(Z)} \arrow[r,Rightarrow,"{T}"']\& \HomC{\E}{(F\c H)(-)}{Z}
	\end{cd}
	\end{eqD}
	Given a marked oplax $2$-cocylinder
	$$\sigma\:W\aoplaxm{}\HomC{\E}{(F\c H)(-)}{Z},$$
	we prove that there is a unique $\delta\:F(C)\to Z$ in $\E$ such that $(\delta\c -)\c F\c \mu = \sigma$.
	Postcomposing $\sigma$ with $S_{H(-),Z}$, we obtain a marked oplax $2$-cocylinder for $H$. Indeed $S_{H(-),Z}$ is marked oplax natural in $A\in \A\op$ with structure $2$-cell on $a\:A\al{}A'$ in $\A$ given by $(-\ast \eta_{H(a)})\ast U$, since $\eta$ is loose strict/lax $\F$-natural and $H$ is an $\F$-functor. So, by universality of $\mu$, $S\c \sigma$ induces a unique $\gamma\:C\to U(Z)$ in $\S$ such that $(\gamma\c -)\c \mu=S\c \sigma$.
	Notice then that the right square of the figure in equation~\refs{diagrcondensesprescolim} is commutative, since it is equivalent to
	$$(\eps_{Z}\c F(\gamma))\c F(-)=\eps_Z\c F(\gamma\c -),$$
	which holds since $F$ is a $2$-functor. So $\delta\deq T(\gamma)$ in $\E$ is such that
	$$(\delta\c -)\c F\c \mu=T\c (\gamma\c -)\c \mu=T\c S\c \sigma=\sigma.$$
	We now show the uniqueness of $\delta$. So consider another $\delta'\:F(C)\to Z$ in $\E$ such that $(\delta'\c -)\c F\c \mu = \sigma$. Postcomposing with $S$ we obtain
	$$S\c (\delta'\c -)\c F\c \mu = S\c \sigma.$$
	But we notice that
	\begin{eqD}{eqFcattheorem}
		S\c (\delta'\c -)\c F\c \mu=\left(S(\delta')\c -\right)\c \mu
	\end{eqD}
	as marked oplax natural transformations. Indeed, given $A\in \A$ and $\alpha\:X\to X'$ in $W(A)$,
	$${\left(S\c (\delta'\c -)\c F\c \mu\right)}_{A}(X)=U\left(\delta'\c F\left(\mu_A(X)\right)\right)\c \eta_{H(A)}=U(\delta')\c U\left(F\left(\mu_A(X)\right)\right)\c \eta_{H(A)}.$$
	Since $\mu_A(X)$ is tight and $\eta$ is loose strict/lax $\F$-natural, $\eta_{\mu_A(X)}=\id{}$ and hence
	$$U(\delta')\c U\left(F\left(\mu_A(X)\right)\right)\c \eta_{H(A)}=U(\delta')\c \eta_C\c \mu_A(X)=S(\delta')\c \mu_A(X).$$
	And it works similarly for the images on $\alpha$, using the $2$-dimensional property of $\eta$ being oplax natural. Given $a\:A\al{} A'$ in $\A$ and $X\in W(A)$,
	$${\left(S\c (\delta'\c -)\c F\c \mu\right)}_{a,X}=U\left(\delta'\c F\left(\mu_A(X)\right)\right)\ast \eta_{H(a)} \c U\left(\delta'\ast F\left(\mu_{a,X}\right)\right)\ast \eta_{H(A')}.$$
	Considering $\mu_{a,X}\:\mu_{A'}(W(a)(X))\aR{} \mu_A(X)\c H(a)$ in $\S$, since $\eta$ is loose strict/lax $\F$-natural and both $\mu_A(X)$ and $\mu_{A'}(W(a)(X))$ are tight, we obtain
	$$U\left(F\left(\mu_A(X)\right)\right)\ast\eta_{H(a)}\c U\left(F\left(\mu_{a,X}\right)\right)\ast \eta_{H(A')}=\eta_C\ast \mu_{a,X},$$
	whence we conclude that equation~\refs{eqFcattheorem} holds. Therefore
	$$\left(S(\delta')\c -\right)\c \mu=S\c \sigma=(\gamma\c -)\c \mu,$$
	and by universality of $\mu$ we conclude that $S(\delta')=\gamma$, whence 
	$$\delta'=T(S(\delta'))= T(\gamma)=\delta.$$
	
	It remains to prove the $2$-dimensional universal property of $F\c \mu$. Given a modification
	$$\Sigma\:\sigma\aM{}\sigma'\:W\aoplaxm{}\HomC{\E}{(F\c H)(-)}{Z},$$
	we show that there is a unique $\Delta\:\delta\aR{}\delta'\:F(C)\to Z$ in $\E$ such that $(\Delta\ast -)\ast (F\c \mu)=\Sigma$.
	By universality of $\mu$, whiskering $\Sigma$ with $S$ on the right induces a unique $\Gamma\:\gamma\aR{}\gamma'$ in $\S$ such that $(\Gamma\ast -)\ast \mu = S\ast \Sigma$. Notice then that
	$$T\ast (\Gamma\ast -)=(T(\Gamma)\ast -)\ast F$$
	because $F$ is a $2$-functor, and thus $\Delta\deq T(\Gamma)$ in $\E$ is such that
	$$(\Delta\ast -)\ast (F\c \mu)=T\ast (\Gamma\ast -)\ast \mu =T\ast S\ast \Sigma=\Sigma.$$
	To show the uniqueness of $\Delta$, take another $\Delta'$ such that $(\Delta'\ast -)\ast (F\c \mu)=\Sigma$. Whiskering with $S$ on the right, we obtain
	$$S\ast (\Delta'\ast -)\ast (F\c \mu)=S\ast \Sigma.$$
	But notice that
	$$S\ast (\Delta'\ast -)\ast (F\c \mu)=(S(\Delta')\ast -)\ast \mu$$
	Indeed it suffices to check it on components, where it holds since $\mu_A(X)$ is tight and hence $\eta_{\mu_A(X)}=\id{}$. So
	$$(S(\Delta')\ast -)\ast \mu=S\ast \Sigma=(\Gamma\ast -)\ast \mu,$$
	whence $S(\Delta')=\Gamma$ by universality of $\mu$ and thus
	$$\Delta'=T(S(\Delta'))=T(\Gamma)=\Delta.$$
	Therefore $F\c\mu$ is universal in the $2$-categorical sense.
\end{proof}

\section{Colimits in 2-dimensional slices}\label{sectioncolimitsin2slices}

We aim at generalizing to dimension $2$ the well-known $1$-dimensional result that a colimit in a slice corresponds to the map from the colimit of the domains of the diagram which is induced by the universal property. Half of such result will be captured by preservation of $2$-colimits for the domain $2$-functor from a lax slice (assuming to have products in the base 2-category), that we will address in Section~\ref{sectionliftingpreservfordom}. In this section, we focus on the other half, that is the generalization to dimension $2$ of the special property of equation~\refs{propertycoliminslicedim1} (of \thex\ref{colimitsinslicesdim1}). Namely, we want to prove that a morphism from a $2$-colimit to some $M$ can be expressed as a $2$-colimit in a $2$-dimensional slice over $M$.

We show two different approaches to this, that lead to the same result (compare \conx\ref{consfirstapproach} with \thex\ref{teordomis2colimfib}). The first one is more intuitive, based on the reduction of the weighted $2$-colimits to cartesian-marked oplax conical ones. Such reduction has been originally proved by Street in~\cite{street_limitsindexedbycatvalued}; we gave new more elementary proofs in~\cite{mesiti_twosetenrgrothconstrlaxnlimits}. See also Descotte, Dubuc and Szyld's~\cite{descottedubucszyld_sigmalimandflatpseudofun} for a pseudo version. The second approach is more abstract, based on an original concept of \predfn{colim-fibration} (\defx\ref{defcolimfibrationdim1} in dimension $1$ and \defx\ref{def2colimfib} in dimension $2$). This will offer a shorter and more elegant proof, in \thex\ref{teordomis2colimfib}. Both the approaches show the need to consider lax slices.

This section also contains a result of reflection of $2$-colimits for the domain $2$-functor $\dom\:\laxslice{\E}{M}\to \E$ from a lax slice.

We now begin exploring the first approach to the generalization to dimension $2$ of equation~\refs{propertycoliminslicedim1} (of \thex\ref{colimitsinslicesdim1}).

\begin{rec}[Street's~\cite{street_limitsindexedbycatvalued}, our~\cite{mesiti_twosetenrgrothconstrlaxnlimits}]\label{rectwocatofel}
	We recall here the 2-category of elements construction and the reduction of weighted 2-colimits to cartesian-marked oplax conical ones.

	Let $W\:\A\op\to \Cat$ be a 2-functor with $\A$ a small 2-category. The \dfn{2-category of elements} of $W$ is the 2-functor $\groth{W}\:\Groth{W}\to\A$  of projection on the first component, with $\Groth{W}$ defined as follows:
		\begin{description}
			\item[an object of $\Groth{W}$] is a pair $(A,X)$ with $A\in \A$ and $X\in F(A)$;
			\item[a morphism $(A,X)\to (B,X')$ in $\Groth{W}$] is a pair $(f,\alpha)$ with $f\:A\to B$ a morphism in $\A$ and $\alpha\:X\to W(f)(X')$ a morphism in $W(A)$;
			\item[a $2$-cell $(f,\alpha)\aR{}(g,\beta)\:(A,X)\to (B,X')$ in $\Groth{W}$] is a $2$-cell $\delta\:f\aR{}g$ in $\A$ such that $W(\delta)_{X'}\c \alpha=\beta$.
		\end{description}

	The 2-category of elements gives a 2-equivalence (\cite{mesiti_twosetenrgrothconstrlaxnlimits}) between the 2-category $\m{\A\op}{\Cat}$ of 2-presheaves and that of \predfn{split discrete 2-fibrations} with small fibres over $\A$.

	A \dfn{cloven \textup{[}resp.\ split\textup{]} discrete 2-fibration} is a $2$-functor $P\:\E\to \A$ such that
		\begin{enum}
			\item the underlying functor $P_0$ of $P$ is an ordinary cloven \textup{[}resp.\ split\textup{]} Grothendieck fibration;
			\item for every pair $X,Y\in \E$ the functor $P_{X,Y}\:\HomC{\E}{X}{Y}\to \HomC{\B}{P(X)}{P(Y)}$ is a discrete opfibration.
		\end{enum}
	By Lambert's~\cite{lambert_discretetwofib}, this corresponds to a locally discrete version of Hermida's~\cite{hermida_somepropoffibasafibredtwocat} $2$-fibrations.

	$\Groth{W}$ is naturally endowed with a structure of $\F$-category, taking its loose part to be itself (as a 2-category) and the tight morphisms to be the the morphisms of type $\p{f,\id{}}$, i.e.\ the morphisms of the cleavage naturally associated to the discrete 2-fibration $\groth{W}\:\Groth{W}\to\A$.

	Consider $2$-functors $M,N\:{\left(\Groth{W}\right)}\op\to \Cat$. A \dfn{cartesian-marked oplax natural transformation} $\alpha$ from $M$ to $N$, denoted $\alpha\:M\aoplaxn{}N$,\v is an oplax natural transformation $\alpha$ from $M$ to $N$ such that the structure $2$-cell on every morphism $\p{f,\id{}}$ in ${\left(\Groth{W}\right)}\op$ is the identity. Equivalently, it is a marked oplax natural transformation with respect to the $\F$-category structure on $\Groth{W}$ described above.

	Let $W\:\A\op\to \Cat$ and $F\:\Groth{W}\to \C$ be 2-functors with $\A$ small. The \dfn{cartesian-marked oplax conical $2$-colimit of $F$}, denoted as $\oplaxncolim{F}$, is (if it exists) an object $C\in \C$ together with a $2$-natural isomorphism of categories
		$$\HomC{\C}{C}{U}\iso \HomC{\moplaxn{{\left(\Grothdiag{W}\right)}\op}{\CAT}}{\Delta 1}{\HomC{\C}{F(-)}{U}}$$
	where the right hand side denotes cartesian-marked oplax natural transformations. Equivalently, this is the loose part of a strict/oplax $\F$-colimit with respect to the $\F$-category structure on $\Groth{W}$ described above. Taking the trivial $\F$-category structure on $\C$, with the tight morphisms being all loose morphisms, and considering the conical $\F$-weight $\Delta 1$, this is automatically a tight strict/oplax $\F$-colimit.
		
		\noindent When $\oplaxncolim{F}$ exists, the identity on $C$ provides the \dfn{universal cartesian-marked oplax cocone} $\mu\:\Delta 1 \aoplaxn{}\HomC{\C}{F(-)}{C}$.

	 Cartesian-marked oplax conical $2$-colimits are particular weighted $2$-colimits.

	Every weighted $2$-colimit can be reduced to a cartesian-marked oplax conical one. Given $2$-functors $F\:\A\to \C$ and $W\:\A\op\to\CAT$ with $\A$ small,
	$$\wcolim{W}{F}\iso \oplaxncolim{\left(F\c \groth{W}\right)}.$$
\end{rec}

\begin{cons}\label{consfirstapproach}
	Let $\E$ be a $2$-category and let $M\in \E$. Consider a $2$-diagram $F\:\A\to \E$ with $\A$ small and a weight $W\:\A\op\to \CAT$ such that the colimit $\wcolim{W}{F}$ of $F$ weighted by $W$ exists in $\E$. Take then a morphism $q\:\wcolim{W}{F}\to M$, or equivalently the corresponding weighted $2$-cocylinder
	$$\nu^q\:W\aR{}\HomC{\E}{F(-)}{M}.$$
	We would like to express $q$ as a $2$-colimit in a $2$-dimensional slice of $\E$ over $M$. So we need to construct from $\nu^q$ a $2$-diagram in a $2$-dimensional slice. In dimension $1$, equation~\refs{propertycoliminslicedim1} (of \thex\ref{colimitsinslicesdim1}) is based on the fact that a cocone on $M$ coincides with a diagram in $\slice{\C}{M}$. But in dimension $2$ we have a weighted $2$-cocylinder $\nu^q$ instead of a strict cocone, and thus it is not clear how to directly find a corresponding diagram in a slice, which still has a conical shape. We notice that this is essentially a matter of selecting a cocone out of the bunch of cocones that form the weighted $2$-cocylinder $\nu^q$. We then obtain a great help from the reduction of weighted $2$-colimits to cartesian-marked oplax conical ones.
	$$\wcolim{W}{F}\iso \oplaxncolim{\left(F\c \groth{W}\right)}$$
	where $\groth{W}\:\Groth{W}\to\A$ is the $2$-category of elements of $W$. And $\nu^q$ corresponds to a cartesian-marked oplax $2$-cocone
	$$\lambda^q\:\Delta 1 \aoplaxn{} \HomC{\E}{(F\c \groth{W})(-)}{M}\:{\left(\Grothdiag{W}\right)}\op\to \CAT.$$
	
	It is now easy to check that a cartesian-marked oplax $2$-cocone on $M$ can be reorganized as a $2$-diagram in the lax slice $\laxslice{\E}{M}$ on $M$ (we will see the complete correspondence in \prox\ref{correspoplaxncoconesdiag}), where a $1$-cell in the lax slice from $E\ar{g}{M}$ to $E'\ar{g'}{M}$ is a filled triangle
	\tr[4][5.5][0][0][l][-0.3][\gamma][0.85]{E}{E'}{M}{\widehat{\gamma}}{g}{g'}
	More precisely, we can reorganize $\lambda^q$ as the $2$-diagram
	\begin{fun}
		L^q & \: & \Grothdiag{W}\hphantom{...} & \too & \hphantom{...} \laxslice{\E}{M} \\[1.3ex]
		&& \fibdiag{(A,X)}{(f,\alpha)}{(B,X')} & \mto & \tr*[2.8][4][0][0][l][-0.235][\lambda^q_{f,\alpha}][1]{F(A)}{F(B)}{M}{F(f)}{\lambda^q_{(A,X)}}{\lambda^q_{(B,X')}}\\[1.3ex]
		&& \delta \hphantom{.....}& \mto & \hphantom{.....} F(\delta)
	\end{fun}
	In \thex\ref{teordomis2colimfib}, we will prove that
	$$\fib{\wcolim{W}{F}}{q}{M}=\fib{\oplaxncolim{\left(F\c \groth{W}\right)}}{q}{M}=\oplaxncolim{L^q}$$
	in the lax slice $\laxslice{\E}{M}$. Of course, one could prove this directly, but our proof will be shorter and more abstract, in \thex\ref{teordomis2colimfib}, based on the \predfn{colim-fibrations} point of view (that is, the second approach named above). 
\end{cons}

\begin{rem}
	Although weighted $2$-colimits cannot be conicalized, we can almost conicalize them reducing them to cartesian-marked oplax conical $2$-colimits. The price to pay is to have $2$-cells inside the cocones. And this then translates as the need to consider lax slices in order to generalize the $1$-dimensional \thex\ref{colimitsinslicesdim1} to dimension $2$.

	Such need is further justified by the second approach (see \remx\ref{remjustifydomlaxslice}), that we now present. The idea is to capture \thex\ref{colimitsinslicesdim1} from a more abstract point of view, in a way that resembles the property of being a discrete fibration. We will then proceed to generalize such approach to dimension $2$, arriving to \thex\ref{teordomis2colimfib}.
	
	The following definition does not seem to appear in the literature. 
\end{rem}

\begin{defne}\label{defcolimfibrationdim1}
	A functor $p\:\S\to \C$ is a \dfn{colim-fibration} if for every object $S\in \S$ and every universal cocone $\mu$ that exhibits $p(S)$ as the colimit of some diagram $D\:\A\to \C$ with $\A$ small, there exists a unique pair $(\ov{D},\ov{\mu})$ with $\ov{D}\:\A\to\S$ a diagram and $\ov{\mu}$ a universal cocone that exhibits $S=\colim{\ov{D}}$ such that $p\c \ov{D}=D$ and $p\c \ov{\mu}=\mu$.\nv
	\begin{eqD}{imagecolimfibration}
		\begin{cd}*[1.6][0.5]
			\ov{D}(A)\arrow[rrd,bend left=12,"{\ov{\mu}_A}"]\arrow[dr,"{\ov{D}(f)}"',""{name=A}]\&[-0.2ex]\\
			\& \ov{D}(B)\arrow[dd,mapsto,"{p}"{pos=0.164},shorten <=-0.2ex,shorten >=6.7ex,shift right=4.35ex]\arrow[r,"{\ov{\mu}_B}"']\&[10ex] S\arrow[dd,mapsto,"{p}"{pos=0.51},shorten <=3.25ex,shorten >=3.1ex]\\[1ex]
			{D}(A)\arrow[rrd,bend left=12,"{{\mu}_A}"]\arrow[dr,"{{D}(f)}"',""{name=B}]\\
			\& {D}(B)\arrow[r,"{{\mu}_B}"']\&[10ex] p(S)
		\end{cd}
	\end{eqD}
\end{defne}

\begin{prop}\label{characterizecolimfibration}
	Let $p\:\S\to\C$ be a functor. The following are equivalent:
	\begin{enumT}
		\item $p$ is a colim-fibration;
		\item $p$ is a discrete fibration that reflects all colimits;
		\item for every object $S\in \S$ and every universal cocone $\mu$ that exhibits $p(S)$ as the colimit of some diagram $D\:\A\to \C$ with $\A$ small, there exists a unique pair $(\ov{D},\ov{\mu})$ with $\ov{D}\:\A\to\S$ a diagram and a $\ov{\mu}$ a cocone for $\ov{D}$ on $S$ such that $p\c \ov{D}=D$ and $p\c \ov{\mu}=\mu$; moreover $\ov{\mu}$ is a universal cocone that exhibits $S=\colim{\ov{D}}$.
	\end{enumT}
\end{prop}
\begin{proof}
	It is straightforward to prove that any colim-fibration is a discrete fibration, considering diagrams $D\:\2\to \C$ that correspond to the arrows to lift. As a corollary, we obtain that $(i)$ is equivalent to $(iii)$.
	
	We show $``(i)\aR{}(ii)"$. Take a diagram $H\:\A\to \S$ with $\A$ small and a cocone $\zeta$ for $H$ on some object $S\in\S$; assume then that $p\c \zeta$ is a universal cocone, exhibiting $p(S)=\colim{(p\c H)}$.  By condition $(iii)$, we know that there exist a unique pair $(\ov{p\c H},\ov{p\c \zeta})$ with $\ov{p\c H}\:\A\to\S$ a diagram and $\ov{p\c \zeta}$ a cocone for $\ov{p\c H}$ on $S$ such that $p\c\ov{p\c H}=p\c H$ and $p\c \ov{p\c \zeta} = p\c \zeta$, and that moreover $\ov{p\c \zeta}$ is a universal cocone that exhibits $S=\colim{\ov{p\c H}}$. But then we need to have that $\ov{p\c H}=H$ and $\ov{p\c \zeta}=\zeta$.
	
	Finally, we prove $``(ii)\aR{}(i)"$. Take $S\in \S$ and a universal cocone $\mu$ that exhibits $p(S)$ as the colimit of some diagram $D\:\A\to \C$ with $\A$ small. Since $p$ is a discrete fibration, there exists a unique pair $(\ov{D},\ov{\mu})$ with $\ov{D}\:\A\to\S$ a diagram and $\ov{\mu}$ a cocone for $\ov{D}$ on $S$ such that $p\c \ov{D}=D$ and $p\c \ov{\mu}=\mu$. Indeed, for every $A\in \A$, we can define $\ov{D}(A)$ and $\ov{\mu}_A$ by taking the unique lifting of $\mu_A$ to $S$, and, for every $f\:A\to B$ in $\A$, define $\ov{D}(f)$ to be the unique lifting of $D(f)$ to $\ov{D}(B)$, whose domain needs to be $\ov{D}(A)$ since any discrete fibration is split. Moreover $\ov{\mu}$ needs to be universal since $p$ reflects all the colimits and $p\c \ov{\mu}=\mu$ is universal.
\end{proof}
	
\begin{rem}
	Condition $(iii)$ of \prox\ref{characterizecolimfibration} says that, for a colim-fibration, the liftings $\ov{\mu}$ of universal cocones $\mu$ are unique as mere cocones over $\mu$ on the starting $S\in \S$.
	
	We notice that such condition $(iii)$ is actually similar to the definition of creating colimits (see for example Ad{\'a}mek, Herrlich and Strecker's~\cite{adamekherrlichstrecker_abstractandconcretecat}), but somehow dual to it. Indeed, looking at the diagram in equation~\refs{imagecolimfibration}, creation of colimits starts from a diagram $\ov{D}$ and produces a colimit $S$ for it, while being a colim-fibration starts from some $S$ and produces a diagram $\ov{D}$ with colimit $S$.
\end{rem}
	
\begin{rem}\label{teordomrewritten}
	We can now rewrite \thex\ref{colimitsinslicesdim1} by saying that $\dom\:\slice{\C}{M}\to\C$ is a colim-fibration that preserves and uniquely lifts (or actually just preserves and detects) all colimits (assuming products in $\C$). Indeed, this is stronger than \thex\ref{colimitsinslicesdim1}, but $\dom$ can be expressed as the category of elements of the representable $\y{M}\:\C\op\to\Set$ and is thus a discrete fibration. The explicit formula in equation~\refs{propertycoliminslicedim1} comes from the explicit liftings of $\dom$.
\end{rem}

\begin{cons}\label{consliftoplaxnormalcocone}
	At this point, we are ready to generalize the concept of colim-fibration to dimension $2$, upgrading discrete fibrations to discrete 2-fibrations. As discrete 2-fibrations are locally discrete opfibrations, they are able to uniquely lift $2$-cells to a fixed domain $1$-cell. As it would now be much harder to directly generalize \defx\ref{defcolimfibrationdim1} in a way that implies being a discrete $2$-fibration, we think it is most concise to just ask having a discrete $2$-fibration.

	We then need to use a $2$-categorical concept of cocone. While weighted $2$-cocylinders would be hard to handle, we notice that a discrete $2$-fibration has the ability to lift cartesian-marked oplax $2$-cocones. Indeed, let $p\:\S\to\E$ be a cloven discrete $2$-fibration. Consider then $S\in \S$, a marking $W\:\A\op\to \CAT$ with $\A$ small, a $2$-diagram $D\:\Groth{W}\to \E$ and a cartesian-marked oplax $2$-cocone
	$$\theta\:\Delta 1\aoplaxn{}\HomC{\E}{D(-)}{p(S)}$$
	for $D$ on $p(S)$. Then $p$ lifts $(D,\theta)$ to a pair $(\ov{D},\ov{\theta})$ with $\ov{D}\:\Groth{W}\to \S$ a $2$-diagram and $\ov{\theta}$ a cartesian-marked oplax $2$-cocone for $\ov{D}$ on $S$ such that $p\c \ov{D}=D$ and $p\c \ov{\theta}=\theta$.
	\begin{eqD}{imagecolimtwofibration}
		\begin{cd}*[1.5][0.6]
			\ov{D}(A,X)\arrow[rrd,bend left=20,"{\ov{\theta}_{(A,X)}}"{inner sep=0.2ex},""'{name=J}]\arrow[dr,"{\ov{D}(f,\alpha)}"',""{name=A}]\&[-0.6ex]\\
			\& \ov{D}(B,X')\arrow[Rightarrow,from=J,"{\ov{\theta}_{f,\alpha}}"{pos=0.34},shorten <=0.75ex,shorten >=0.65ex]\arrow[dd,mapsto,"{p}"{pos=0.20},shorten <=0.15ex,shorten >=7.15ex,shift right=5.7ex]\arrow[r,"{\ov{\theta}_{(B,X')}}"']\&[10ex] S\arrow[dd,mapsto,"{p}"{pos=0.51},shorten <=3.75ex,shorten >=3.4ex]\\[3ex]
			{D}(A,X)\arrow[rrd,bend left=20,"{{\theta}_{(A,X)}}"{inner sep=0.2ex},""'{name=K}]\arrow[dr,"{{D}(f,\alpha)}"',""{name=B}]\\
			\& {D}(B,X')\arrow[Rightarrow,from=K,"{\theta_{f,\alpha}}"{pos=0.34},shorten <=0.75ex,shorten >=0.65ex]\arrow[r,"{{\theta}_{(B,X')}}"']\&[10ex] p(S)
		\end{cd}
	\end{eqD}
	For every $(A,X)\in \Groth{W}$, we define $\ov{D}(A,X)$ and $\ov{\theta}_{(A,X)}$ by taking the chosen cartesian lifting of $\theta_{(A,X)}$ to $S$. For every $(f,\alpha)\:(A,X)\to (B,X')$ in $\Groth{W}$, we then define $\ov{\theta}_{f,\alpha}\:\ov{\theta}_{(A,X)}\to \xi$ to be the unique lifting of $\theta_{f,\alpha}$ to $\ov{\theta}_{(A,X)}$. Since $\ov{\theta}_{(B,X')}$ is cartesian, $\xi$ factors through $\ov{\theta}_{(B,X')}$; we define $\ov{D}(f,\alpha)$ to be the unique factoring morphism $\ov{D}(A,X)\to\ov{D}(B,X')$, so that $\ov{\theta}_{f,\alpha}\:\ov{\theta}_{(A,X)}\to \ov{\theta}_{(B,X')}\c \ov{D}(f,\alpha)$. It remains to define $\ov{D}$ on $2$-cells. Given $\delta\:(f,\alpha)\aR{}(g,\beta)\:(A,X)\to (B,X')$ in $\Groth{W}$, we define $\ov{D}(\delta)$ to be the unique lifting of $D(\delta)$ to $\ov{D}(f,\alpha)$. The codomain of $\ov{D}(\delta)$ is $\ov{D}(g,\beta)$ because of the uniqueness of the lifting of 
	\begin{cd}[2.4][1]
		{D}(A,X)\arrow[rrd,bend left=20,"{{\theta}_{(A,X)}}"{inner sep=0.2ex},""'{name=K}]\arrow[dr,bend left=18,"{{D}(f,\alpha)}"{inner sep=0.15ex},""'{name=B}]\arrow[dr,bend right=38,"{{D}(g,\beta)}"'{inner sep=0.15ex},""{name=C}]\\
		\&{D}(B,X')\arrow[Rightarrow,from=K,"{\theta_{f,\alpha}}"{pos=0.34},shorten <=0.75ex,shorten >=0.65ex]\arrow[r,"{{\theta}_{(B,X')}}"']\&[10ex] p(S),
		\Arbs[pos=0.2]{Rightarrow,shift left=0.9ex}{D(\delta)}{B}{C}{0.1}
	\end{cd}
	which coincides with $\theta_{g,\beta}$, to $\ov{\theta}_{(A,X)}$, and the cartesianity of $\ov{\theta}_{(B,X')}$. This argument also proves the $2$-dimensional property of the oplax naturality of $\ov{\theta}$. It is straightforward to check that $\ov{D}$ is a $2$-functor and that $\ov{\theta}$ is cartesian-marked oplax natural, using the cartesianity of the $\ov{\theta}_{(A,X)}$'s and the uniqueness of the liftings of a $2$-cell to a fixed domain $1$-cell (with arguments similar to the above one).

	Clearly, the $\ov{\theta}_{(A,X)}$'s are not unique above the $\theta_{(A,X)}$, but cartesian. Having fixed them, however, the rest of the cartesian-marked oplax cocone $\ov{\theta}$ is uniquely defined. It is also true that, given another pair $(\t{D},\t{\theta})$ that lifts $(D,\theta)$, the unique vertical morphisms $\t{D}(A,X)\to \ov{D}(A,X)$ that produce the factorization of the $\t{\theta}_{(A,X)}$'s through the cartesian $\ov{\theta}_{(A,X)}$'s form a unique vertical $2$-natural transformation $j\:\t{D}\to \ov{D}$ such that $\t{\theta}=(-\c j)\c\ov{\theta}$. This can be checked using the uniqueness of the liftings of a $2$-cell to a fixed domain $1$-cell and the cartesianity of the $\ov{\theta}_{(A,X)}$'s.
\end{cons}

The following definition is original.

\begin{defne}\label{def2colimfib}
	A $2$-functor $p\:\S\to \E$ is a \dfn{$2$-colim-fibration} if it is a cloven discrete $2$-fibration such that, for every $S\in \S$, marking $W\:\A\op\to \CAT$ with $\A$ small, $2$-diagram $D\:\Groth{W}\to \E$ and universal cartesian-marked oplax $2$-cocone
	$$\theta\:\Delta 1\aoplaxn{}\HomC{\E}{D(-)}{p(S)}$$
	that exhibits $p(S)=\oplaxncolim{D}$, the pair $(\ov{D},\ov{\theta})$ obtained by lifting $(D,\theta)$ through $p$ to $S$ as in \conx\ref{consliftoplaxnormalcocone} exhibits
	$$S=\oplaxncolim{\ov{D}}.$$
\end{defne}

\begin{rem}
	Remember that every weighted $2$-colimit can be reduced to a cartesian-marked oplax conical one, so the property of being a $2$-colim-fibration can as well be applied to any universal weighted $2$-cocylinder.

	We would now like to generalize \prox\ref{characterizecolimfibration} to dimension $2$. We see, however, that a $2$-colim-fibration does not necessarily reflect all the (cartesian-marked oplax conical) $2$-colimits, because the lifting $(\ov{D},\ov{\theta})$ of \conx\ref{consliftoplaxnormalcocone} is not unique anymore. Indeed, if we start from a cartesian-marked oplax $2$-cocone above (at the level of $\S$), project it down and lift, we do not find in general the starting cartesian-marked oplax $2$-cocone. We almost find the same, however, if we start from a cartesian-marked oplax $2$-cocone that we call \predfn{cartesian}, and now define. This will bring to \prox\ref{propcharacterization2colimfib}.
\end{rem}

\begin{defne}
	Let $p\:\S\to \E$ be a cloven discrete $2$-fibration. Consider then $S\in \S$, a marking $W\:\A\op\to \CAT$ with $\A$ small and a $2$-diagram $H\:\Groth{W}\to \S$. A cartesian-marked oplax $2$-cocone
	$$\zeta\:\Delta 1\aoplaxn{}\HomC{\S}{H(-)}{S}$$
	is \dfn{cartesian} \textup{[}resp.\ \dfn{cloven}\textup{]} if for every $(A,X)\in \Groth{W}$ the component $\zeta_{(A,X)}$ (seen as a morphism in $\S$) is cartesian with respect to $p$ \textup{[}resp.\ in the cleavage\textup{]}.

	We say that $p$ \dfn{reflects all the cartesian} \textup{[}resp.\ \dfn{cloven}\textup{]} (cartesian-marked oplax conical) \dfn{$2$-colimits} if it reflects the universality of cartesian \textup{[}resp.\ cloven\textup{]} cartesian-marked oplax $2$-cocones.
\end{defne}

\begin{prop}\label{propcharacterization2colimfib}
	Let $p\:\S\to \E$ be a cloven discrete $2$-fibration. The following are equivalent:
	\begin{enumT}
		\item $p$ is a $2$-colim-fibration;
		\item $p$ reflects all the cartesian $2$-colimits;
		\item $p$ reflects all the cloven $2$-colimits.
	\end{enumT}
\end{prop}
\begin{proof}
	$``(ii)\aR{}(iii)"$ is trivial. We show $``(iii)\aR{}(i)"$. In the notation of \defx\ref{def2colimfib}, the cartesian-marked oplax $2$-cocone $\ov{\theta}$ is cloven by \conx\ref{consliftoplaxnormalcocone}. Since $p\c \ov{\theta}=\theta$ is universal, also $\ov{\theta}$ needs to be universal.

	We now prove $``(i)\aR{}(ii)"$. So consider $S\in \S$, a marking $W\:\A\op\to \CAT$ with $\A$ small, a $2$-diagram $H\:\Groth{W}\to \S$ and a cartesian cartesian-marked oplax $2$-cocone
	$$\zeta\:\Delta 1\aoplaxn{}\HomC{\S}{H(-)}{S}.$$
	Assume that $p\c \zeta$ is universal, exhibiting $p(S)=\oplaxncolim{(p\c H)}$. We prove that $\zeta$ is universal as well. Consider the lifting $(\ov{p\c H},\ov{p\c \zeta})$ of $({p\c H},{p\c \zeta})$ through $p$ to $S$, as in \conx\ref{consliftoplaxnormalcocone}. It is straightforward to check that, since cartesian liftings are unique up to a unique vertical isomorphism, there exists a $2$-natural isomorphism $j\:H\iso \ov{p\c H}$ such that $\zeta=(-\c j)\c\ov{p\c \zeta}$ (see the last part of \conx\ref{consliftoplaxnormalcocone}). Since $\ov{p\c \zeta}$ is universal, as $p$ is a $2$-colim-fibration, and $(-\c j)$ is a $2$-natural isomorphism, we conclude.
\end{proof}

\begin{rem}\label{remjustifydomlaxslice}\label{excartinlaxslice}
	Looking at \remx\ref{teordomrewritten}, in order to obtain a generalization to dimension 2 of \thex\ref{colimitsinslicesdim1} (or better, of the stronger colim-fibration result), we upgrade the $1$-dimensional $\dom\:\slice{\C}{M}\to\C$ to the 2-category of elements of the representable $\y{M}\:\E\op\to \Cat$. This produces the domain 2-functor from the lax slice $\laxslice{\E}{M}$, further justifying \conx\ref{consfirstapproach}.
	\sq[o][6][6.5][\lax \opn{comma}][2.2][2.7][0.57]{\laxslice{\E}{M}}{\1}{\E}{\CAT\op}{}{\dom}{\1}{\y{M}}
	The 2-functor $\dom\:\laxslice{\E}{M}\to \E$ is thus a cloven discrete 2-fibration (in a canonical way). 

	The cartesian [resp.\ cloven] morphisms in $\laxslice{\E}{M}$ with respect to $\dom$ are precisely the triangles
	\tr[3.6][5][0][0][i][-0.6][\gamma][0.85]{E}{E'}{M}{\widehat{\gamma}}{g}{g'}
	with the $2$-cell $\gamma$ an isomorphism [resp.\ an identity]. So the cartesian [resp.\ cloven] cartesian-marked oplax $2$-cocones in $\laxslice{\E}{M}$ are the ones with components triangles filled with isomorphisms [resp.\ identities].
\end{rem}

\begin{teor}\label{teordomreflects}
	Let $\E$ be a $2$-category and $M\in \E$. Then $\dom\:\laxslice{\E}{M}\to \E$ reflects all the cartesian 2-colimits.
\end{teor}
\begin{proof}
	Take $t\:K\to M$, a marking $W\:\A\op\to \CAT$ with $\A$ small and a $2$-diagram $H\:\Groth{W}\to \laxslice{\E}{M}$. Consider then a cartesian cartesian-marked oplax $2$-cocone
	$$\zeta\:\Delta 1\aoplaxn{}\HomC{\laxslice{\E}{M}}{H(-)}{t}$$
	such that $\dom\c\h \zeta$ is universal. We prove that $\zeta$ is universal as well. Given $g\:E\to M$ and
	$$\sigma\:\Delta 1\aoplaxn{}\HomC{\laxslice{\E}{M}}{H(-)}{g},$$
	we need to produce a morphism
	\tr[3.6][5.1][0][0][l][-0.3][\gamma][0.85]{K}{E}{M}{\widehat{\gamma}}{t}{g}
	in $\laxslice{\E}{M}$ such that $\sigma=(\gamma\c -)\c \zeta$. We then need
	$$\dom\c\h \sigma=\dom\c(\gamma\c -)\c\h\zeta =(\widehat{\gamma}\c -)\c\dom\c\h\zeta,$$
	whence $\widehat{\gamma}$ needs to be the unique morphism $K\to E$ in $\E$ induced by $\dom\c\h\sigma$ via universality of $\dom\c \zeta$. We will now produce the inner $2$-cell $\gamma$ in $\E$ via the $2$-dimensional universality of $\dom\c\h \zeta$. Indeed $\gamma$ corresponds to a modification
	$$\Xi\:(t\c -)\c \dom\c\h \zeta\aMM{}(g\c \widehat{\gamma}\c -)\c \dom\c \h\zeta.$$
	Notice that the target of $\Xi$ coincides with $(g\c -)\c \dom\c\h \sigma$. Given $(A,X)\in \Groth{W}$, we will have that $\Xi_{(A,X)}=\gamma\ast \widehat{\zeta_{(A,X)}}$, and we want to obtain
	\begin{eqD*}
	\begin{cd}*[6][7]
		\dom(H(A,X)) \arrow[r,"{\widehat{\zeta_{(A,X)}}}"] \arrow[rd,"{H(A,X)}"'{inner sep=0.2ex},""{name=A}]\& K \arrow[r,"{\widehat{\gamma}}"] \arrow[d,"{t}"'{pos=0.3},""'{name=B},""{pos=0.43,name=C}] \& E \arrow[dl,"{g}",""'{name=D}] \\
		\& M
		\arrow[iso,from=A,to=B,"{\zeta_{(A,X)}}"'{inner sep=0.9ex},shift left=1.9ex,start anchor={[xshift=-1ex]}]
		\arrow[Rightarrow,from=C,to=D,"{\gamma}",shorten <=0.5ex]
	\end{cd}=
	\tr*[3.6][5.1][-3][1.1][l][-0.3][\sigma_{(A,X)}][1.15]{\dom(H(A,X))}{E}{M}{\widehat{\sigma_{(A,X)}}}{H(A,X)}{g}
	\end{eqD*}
	The component $\zeta_{(A,X)}$ is indeed a triangle filled with an isomorphism, by \remx\ref{excartinlaxslice}, since $\zeta$ is cartesian. Whence we need to take
	$$\Xi_{(A,X)}\deq\begin{cd}*[6.5][5.5]
		\& \dom(H(A,X))\arrow[r,"{\widehat{\sigma_{(A,X)}}}"]\arrow[ld,bend right=15,"{\widehat{\zeta_{(A,X)}}}"'{inner sep =0.2ex}]\arrow[d,"{H(A,X)}"{pos=0.3,description},""{name=A}] \& E \arrow[dl,bend left=15,"{g}",""'{pos=0.405,name=B}]\\
		K \arrow[r,"{t}"'] \arrow[r,iso,shift left=4.7ex,"{\zeta_{(A,X)}^{-1}}"'{inner sep=1ex},end anchor={[xshift=4.6ex]}] \& M
		\arrow[Rightarrow,"{\sigma_{(A,X)}}"{pos=0.64},from=A,to=B,shorten <=2.2ex,shorten >=1.3ex]
	\end{cd}$$
	It is straightforward to check that $\Xi$ is a modification between cartesian-marked oplax $2$-cocones. So $\Xi$ induces a unique $2$-cell $\gamma\:t \aR{} g\c \widehat{\gamma}$ in $\E$ such that $(\gamma\ast -)\ast (\dom\c\h \zeta)=\Xi$. It is easy to check that $\sigma=(\gamma\c -)\c \zeta$ (as cartesian-marked oplax natural transformations).
	
	\noindent We now show the uniqueness of $\gamma$. So assume there is some $\gamma'\:t\to g$ in $\laxslice{\E}{M}$ such that $\sigma=(\gamma'\c -)\c \zeta$. Then $\widehat{\gamma'}=\widehat{\gamma}$ by the argument above. Since the two $2$-cells $\gamma$ and $\gamma'$ in $\E$ are then between the same $1$-cells, it suffices to prove that
	$$(\gamma\ast -)\ast (\dom\c\h\zeta)=(\gamma'\ast -)\ast (\dom\c\h\zeta).$$
	But this can be checked on components, and $\gamma\ast \widehat{\zeta_{(A,X)}}=\gamma'\ast \widehat{\zeta_{(A,X)}}$ holds because
	$$(\gamma\c -)\c \zeta=\sigma=(\gamma'\c -)\c \zeta$$
	and $\zeta$ is cartesian (essentially, both give the same $\Xi$).
	
	It remains to prove the $2$-dimensional universality of $\zeta$. Given $g\:E\to M$, two morphisms $\gamma,\gamma'\:t\to g$ in $\laxslice{\E}{M}$ and a modification
	$$\Sigma\:(\gamma\c -)\c\h \zeta\aM{}(\gamma'\c -)\c\h\zeta\:\Delta 1\aoplaxn{}\HomC{\laxslice{\E}{M}}{H(-)}{g},$$
	we need to produce a $2$-cell $\Gamma\:\gamma\to \gamma'$ in $\laxslice{\E}{M}$ such that $\Sigma=(\Gamma\ast -)\ast \zeta$. But the latter equality is satisfied precisely when it is satisfied after composing with $\dom$. So consider $\dom\ast \Sigma$; as $\dom\c\h \zeta$ is universal, we find a unique $\widehat{\Gamma}\:\widehat{\gamma}\aR{}\widehat{\gamma'}$ such that
	$$\dom\ast \Sigma=(\widehat{\Gamma}\ast -)\ast (\dom\c\h \zeta).$$
	It is straightforward to show that $\widehat{\Gamma}$ gives a $2$-cell $\Gamma\:\gamma\to \gamma'$ in $\laxslice{\E}{M}$. By construction $\Sigma=(\Gamma\ast -)\ast \zeta$, and our argument has proved the uniqueness of $\Gamma$ as well.
\end{proof}

	We now conclude that $\dom\:\laxslice{\E}{M}\to \E$ is a $2$-colim-fibration. The explicit construction of liftings along $\dom$ then implies the conclusion of \conx\ref{consfirstapproach} (first approach) from this abstract point of view.

\begin{teor}\label{teordomis2colimfib}
	Let $\E$ be a $2$-category and $M\in \E$. Then $\dom\:\laxslice{\E}{M}\to \E$ is a $2$-colim-fibration. As a consequence, in the notation of \conx\ref{consfirstapproach}, 
	$$\fib{\wcolim{W}{F}}{q}{M}=\fib{\oplaxncolim{\left(F\c \groth{W}\right)}}{q}{M}=\oplaxncolim{L^q}$$
	in the lax slice $\laxslice{\E}{M}$. Here, $L^q$ is the $2$-diagram in $\laxslice{\E}{M}$ that corresponds to the cartesian-marked oplax $2$-cocone $\lambda^q$ on $M$ associated to the weighted $2$-cocylinder on $M$ that $q$ represents.
\end{teor}
\begin{proof}
	Putting together \remx\ref{remjustifydomlaxslice} and \thex\ref{teordomreflects}, we obtain that $\dom\:\laxslice{\E}{M}\to \E$ is a 2-colim-fibration, by \prox\ref{propcharacterization2colimfib}.

	We prove the second part of the statement. Calling $\theta$ the universal cartesian-marked oplax $2$-cocone that exhibits $C=\oplaxncolim{(F\c\groth{W})}$, we obtain that the lifting of $(F\c\groth{W},\theta)$ through $\dom$ to $q$ (calculated as in \conx\ref{consliftoplaxnormalcocone})
	\begin{cd}[1.5][0.5]
		\ov{F\c\groth{W}}(A,X)\arrow[rrd,bend left=20,"{\ov{\theta}_{(A,X)}}"{inner sep=0.2ex},""'{name=J}]\arrow[dr,"{\ov{F\c\groth{W}}(f,\alpha)}"',""{name=A}]\&[-0.6ex]\\
		\& \ov{F\c\groth{W}}(B,X')\arrow[Rightarrow,from=J,"{\ov{\theta}_{f,\alpha}}"{pos=0.34},shorten <=0.75ex,shorten >=0.65ex]\arrow[dd,mapsto,"{\dom}"{pos=0.20},shorten <=0.15ex,shorten >=7.15ex,shift right=5.7ex]\arrow[r,"{\ov{\theta}_{(B,X')}}"']\&[10ex] q\arrow[dd,mapsto,"{\dom}"{pos=0.51},shorten <=3.75ex,shorten >=3.4ex]\\[3ex]
		{F}(A)\arrow[rrd,bend left=20,"{{\theta}_{(A,X)}}"{inner sep=0.2ex},""'{name=K}]\arrow[dr,"{{F}(f)}"',""{name=B}]\\
		\& {F}(B)\arrow[Rightarrow,from=K,"{\theta_{f,\alpha}}"{pos=0.34},shorten <=0.75ex,shorten >=0.65ex]\arrow[r,"{{\theta}_{(B,X')}}"']\&[10ex] C
	\end{cd}
	exhibits
	$$q=\oplaxncolim{\ov{F\c\groth{W}}}$$
	in $\laxslice{\E}{M}$. And we can calculate $\ov{F\c \groth{W}}$ and $\ov{\theta}$ explicitly, looking at the action of $\y{M}\:\E\op\to \CAT$ on $1$-cells and $2$-cells, since $\dom=\groth{\y{M}}$. Given $(A,X)\in \Groth{W}$,
	$$\ov{F\c\groth{W}}(A,X)=\y{M}(\theta_{(A,X)})(q)=q\c \theta_{(A,X)}=\lambda^q_{(A,X)}=L^q(A,X)$$
	\begin{eqD*}
		\ov{\theta}_{(A,X)}=\tr*[3.6][5.1][0][0][e][-0.3][\id{}][1.33]{F(A)}{C}{M}{\theta_{(A,X)}}{\lambda^q_{(A,X)}}{q}
	\end{eqD*}
	Given $(f,\alpha)\:(A,X)\to (B,X')$ in $\Groth{W}$,
	$$\ov{\theta}_{f,\alpha}=\theta_{f,\alpha}\:(\theta_{(A,X)},\id{})\to (\theta_{B,X'}\c F(f),\y{M}(\theta_{f,\alpha})_q)$$
	whence, since $\y{M}(\theta_{f,\alpha})_q=q\ast \theta_{f,\alpha}=\lambda^q_{f,\alpha}$,
	\begin{eqD*}
		\ov{F\c\groth{W}}(f,\alpha)=\tr*[3.6][5.1][0][0][l][-0.3][\lambda^q_{f,\alpha}][0.85]{F(A)}{F(B)}{M}{F(f)}{\lambda^q_{(A,X)}}{\lambda^q_{(B,X')}}=L^q(f,\alpha).
	\end{eqD*}
	Given $\delta\:(f,\alpha)\to(g,\beta)\:(A,X)\to (B,X')$ in $\Groth{W}$, $\ov{F\c\groth{W}}(\delta)=F(\delta)=L^q(\delta)$. 
\end{proof}

\begin{rem}\label{remreflectioninsideFcattheory}
	For $\dom\:\laxslice{\E}{M}\to \E$, that is a cloven discrete 2-fibration, reflecting all the cartesian 2-colimits is equivalent to reflecting all the cloven 2-colimits, by \prox\ref{propcharacterization2colimfib}) (but it requires the same effort to prove one or the other).

	As $\laxslice{\E}{M}=\Groth{\y{M}}$, the lax slice has a canonical $\F$-category structure (see \recx\ref{rectwocatofel}), with loose morphisms the usual ones and tight morphisms the triangles filled with an identity. That is, the tight part of $\laxslice{\E}{M}$ is the strict $2$-slice $\slice{\E}{M}$.

	The 2-functor $L^q$ produced in \thex\ref{teordomis2colimfib} is then an $\F$-functor, with respect to this canonical $\F$-category structure. Moreover, the cartesian-marked oplax conical 2-colimit $q=\oplaxncolim{L^q}$ is actually a strict/oplax conical $\F$-colimit. Indeed, a universal strict/oplax $\F$-cocone over $q$ in $\laxslice{\E}{M}$ is the same as a universal cloven (cartesian-marked oplax) 2-cocone with the extra property of jointly detecting tightness (see \prox\ref{charactsoplaxFcolimit} and \recx\ref{rectwocatofel}). And the universal cloven 2-cocone $\ov{\theta}$ produced in the proof of \thex\ref{teordomis2colimfib} also has such extra property of detecting tightness, by universality of $\dom\c \ov{\theta}=\theta$.

	The same considerations show that the $\F$-functor $\dom\:\laxslice{\E}{M}\to \E$ (taking all morphisms in $\E$ to be tight) reflects strict/oplax conical $\F$-colimits, in the sense that it reflects the universality of cloven 2-cocones with the property of jointly detecting tightness. And furthermore, that it was actually enough to prove this reflection result rather than \thex\ref{teordomreflects} directly, to deduce that $\dom$ is a 2-colim-fibration and thus preserves all cartesian 2-colimits. 

	We can thus view \thex\ref{teordomreflects} and \thex\ref{teordomis2colimfib} inside the framework of $\F$-category theory. Such a framework will be crucial in the following sections, to establish results of lifting and preservation for $\dom\:\laxslice{\E}{M}\to \E$, as well as for change of base 2-functors between lax slices.
\end{rem}

\begin{rem}
	The proofs of \thex\ref{teordomreflects} and \thex\ref{teordomis2colimfib} equally work for sigma colimits of Descotte, Dubuc and Szyld's~\cite{descottedubucszyld_sigmalimandflatpseudofun} with respect to the cartesian marking, in the place of cartesian-marked oplax conical 2-colimits. Slightly modified, they work as well for sigma bicolimits. Exactly as every weighted 2-colimit can be reduced to a cartesian-marked oplax conical one, every (weighted) pseudo colimit can be reduced to a sigma colimit, and every bicolimit can be reduced to a sigma bicolimit (see \cite{descottedubucszyld_sigmalimandflatpseudofun}, where the result is derived from Street's~\cite{street_limitsindexedbycatvalued}). We thus have the following corollary.
\end{rem}
	
\begin{coroll}
	$$\fib{\pseudocolim{W}{F}}{q}{M}=\fib{\sigmacolim{\left(F\c \groth{W}\right)}}{q}{M}=\sigmacolim{L^q}$$
	$$\fib{\bicolim{W}{F}}{q}{M}=\fib{\sigmabicolim{\left(F\c \groth{W}\right)}}{q}{M}=\sigmabicolim{L^q}$$
	in the lax slice $\laxslice{\E}{M}$, where $L^q$ is the $2$-diagram in $\laxslice{\E}{M}$ that corresponds to the sigma cocone $\lambda^q$ on $M$ associated to the weighted pseudo cocylinder on $M$ that $q$ represents.
\end{coroll}
\begin{proof}
	\conx\ref{consliftoplaxnormalcocone} equally works to lift any oplax cocone $\theta$ to an oplax cocone $\ov{\theta}$. If $\theta$ is sigma natural, then $\ov{\theta}$ is sigma natural as well, since a discrete 2-fibration lifts isomorphic 2-cells to isomorphic 2-cells. The same argument of the proof of \thex\ref{teordomreflects} then shows that $\dom$ also reflects the universality of cartesian sigma cocones.
	
	Slightly modified, the proof of \thex\ref{teordomreflects} works as well for sigma bicolimits. Indeed, via bi-universality of $\dom\c \zeta$, we have that $\dom\c \sigma$ induces $\widehat{\gamma}$ and an isomorphic modification $\kappa\:(\widehat{\gamma}\c -)\c \dom\c \zeta\iso \dom\c \sigma$. We then induce the 2-cell $\gamma$ from a slightly modified version of $\Xi$ obtained by pasting the assignment of $\Xi_{(A,X)}$ in the proof of \thex\ref{teordomis2colimfib} with $\kappa_{(A,X)}^{-1}$. So that $\kappa_{(A,X)}$ is by construction a 2-cell in the lax slice, and hence becomes a modification between sigma cocones (as the condition of modification holds after applying $\dom$).
\end{proof}

\section{Lifting and preservation of colimits for the domain 2-functor}\label{sectionliftingpreservfordom}

In this section, we generalize to dimension $2$ the bijective correspondence between cocones on $M$ and diagrams in the slice over $M$ (\prox\ref{correspoplaxncoconesdiag}), thanks to enhanced (or $\F$-)category theory, introduced in Lack and Shulman's~\cite{lackshulman_enhancedtwocatlimlaxmor}. We then prove results of lifting and preservation of $2$-colimits for $\dom\:\laxslice{\E}{M}\to \E$ (\prox\ref{proplifting} and \thex\ref{teordomhassemilaxFadjoint}), establishing a full $2$-categorical generalization of \thex\ref{colimitsinslicesdim1}. Such results are not about all (cartesian-marked oplax) $2$-colimits, but this makes sense from an $\F$-categorical point of view. Furthermore, we characterize which 2-colimits in the lax slice come from 2-colimits in the base 2-category via the work of Section~\ref{sectioncolimitsin2slices}.

In dimension $1$, assuming products in $\C$, the functor $\dom\:\slice{\C}{M}\to M$ preserves colimits because it has a right adjoint, namely $\dom \dashv M\x -$. In dimension $2$, assuming products in $\E$, the $2$-functor $M\x -$ is only a lax right adjoint to $\dom\:\laxslice{\E}{M}\to \E$. As explained in Section~\ref{sectionlaxfadjpreserv}, this would not be enough to guarantee that $\dom$ preserves 2-colimits. But we prove that the lax adjunction between $\dom\:\laxslice{\E}{M}\to \E$ and $M\x -$ is a right-semi-lax (tight) $\F$-adjunction (\thex\ref{teordomhassemilaxFadjoint}). The work of Section~\ref{sectionlaxfadjpreserv} then ensures that $\dom$ preserves all tight strict/oplax $\F$-colimits, as well as a large class of 2-colimits.

\begin{rem}\label{rephrasecorrespondence}
	We have already seen in \conx\ref{consfirstapproach} (first approach) that, in dimension $2$, we can reorganize a cartesian-marked oplax $2$-cocone on $M$ as a $2$-diagram in the lax slice $\laxslice{\E}{M}$. Indeed this was the main idea, together with the reduction of weighted $2$-colimits to cartesian-marked oplax conical ones, of the first approach to the categorification of the equivalence between colimits in $1$-dimensional slices and maps from the colimit of the domains. However, not every $2$-diagram in $\laxslice{\E}{M}$ can produce a cartesian-marked oplax $2$-cocone on $M$, as the cartesian-marked condition may fail.
	
	This can be explained in the framework of $\F$-category theory. As we said in \remx\ref{remreflectioninsideFcattheory}, the lax slice has a canonical $\F$-category structure, with its tight part given by the strict $2$-slice. And we have already noticed there that the 2-functor $L^q$ of \thex\ref{teordomis2colimfib} is an $\F$-diagram. 
\end{rem}

\begin{prop}\label{correspoplaxncoconesdiag}
	Let $\E$ be a $2$-category and $M\in \E$. Consider then a marking $W\:\A\op\to \CAT$ with $\A$ small and a $2$-diagram $D\:\Groth{W}\to\E$. There is a bijection between cartesian-marked oplax $2$-cocones
	$$\lambda\:\Delta 1\aoplaxn{}\HomC{\E}{D(-)}{M}$$
	on $M$, that are the same as marked oplax 2-cocones with respect to the canonical $\F$-category structure on $\Groth{W}$, and $\F$-diagrams $\ov{D}\:\Groth{W}\to \laxslice{\E}{M}$, i.e.\ 2-diagrams that send every morphism of type $(f,\id{})$ to a triangle filled with an identity.
\end{prop}
\begin{proof}
	Given $\lambda$, since $\dom$ is a discrete $2$-fibration, we can lift $(D,\lambda)$ to $\id{M}$ and obtain a pair $(\ov{D},\ov{\lambda})$ as in \conx\ref{consliftoplaxnormalcocone}. Exactly as in the proof of \thex\ref{teordomis2colimfib}, we can calculate $\ov{D}$ explicitly, obtaining the formulas
	$$\ov{D}(f,\alpha)=(D(f,\alpha),\lambda_{f,\alpha})\:\lambda_{(A,X)}\to \lambda_{(B,X')}$$
	
	Starting instead from an $\F$-diagram $\ov{D}$, we can reorganize its data as a cartesian-marked oplax $2$-cocone $\lambda$ for $\dom\c \ov{D}$ on $M$, with formulas
	$$\lambda_{(A,X)}\deq \ov{D}(A,X)\:\dom(\ov{D}(A,X))\to M.$$
	$$\lambda_{f,\alpha}\deq \ov{D}(f,\alpha).$$
	The $2$-functoriality of $\ov{D}$ guarantees that $\lambda$ is oplax natural. And given $(f,\id{})$ in $\Groth{W}$, we obtain $\lambda_{f,\id{}}=\ov{D}(f,\id{})=\id{}$.
	
	It is clear that the two constructions we have produced are inverses of each other.
\end{proof}

We can now prove a result of lifting of $2$-colimits for $\dom\:\laxslice{\E}{M}\to \E$.

\begin{prop}\label{proplifting}
	Let $\E$ be a $2$-category and let $M\in \E$. Then the $2$-colim-fibration $\dom\:\laxslice{\E}{M}\to \E$ lifts all the cartesian-marked oplax conical $2$-colimits of $\F$-diagrams.
	
	That is, given a marking $W\:\A\op\to \CAT$ with $\A$ small, an $\F$-diagram $H\:\Groth{W}\to \laxslice{\E}{M}$ and a universal cartesian-marked oplax $2$-cocone
	$$\theta\:\Delta 1\aoplaxn{}\HomC{\E}{(\dom\c H)(-)}{C}$$
	that exhibits $C=\oplaxncolim{(\dom\c H)}$ in $\E$, there exist $q\in \laxslice{\E}{M}$ over $C$ and a universal cartesian-marked oplax $2$-cocone
	$$\ov{\theta}\:\Delta 1\aoplaxn{}\HomC{\laxslice{\E}{M}}{ H(-)}{q}$$
	for $H$ on $q$ exhibiting $q=\oplaxncolim{H}$ in $\laxslice{\E}{M}$ such that $\dom\c \h\ov{\theta}=\theta$.
\end{prop}
\begin{proof}
	By \prox\ref{correspoplaxncoconesdiag} (together with its proof), the $\F$-diagram $H\:\Groth{W}\to\laxslice{\E}{M}$ corresponds to a cartesian-marked oplax $2$-cocone $\lambda$ for $\dom\c H$ on $M$. As $\theta$ is universal, then $\lambda$ induces a unique morphism $q\:C\to M$ such that $\lambda=(q\c -)\c \theta$.
	
	Since $\dom$ is a 2-colim-fibration, by \thex\ref{teordomis2colimfib}, we can lift the pair $(\dom\c H, \theta)$ to $q$, obtaining via \conx\ref{consliftoplaxnormalcocone} a pair $(\ov{\dom\c H},\ov{\theta})$ over $(\dom\c H,\theta)$ exhibiting
	$$q=\oplaxncolim{\ov{\dom\c H}}.$$
	And we can calculate $\ov{\dom\c H}$ explicitly, as in the proof of \thex\ref{teordomis2colimfib}. Given $\delta\:(f,\alpha)\aR{}(g,\beta)\:(A,X)\to (B,X')$ in $\Groth{W}$
	$$\ov{\dom\c H}(A,X)=q\c \theta_{(A,X)}=\lambda_{(A,X)}=H(A,X)$$
	$$\ov{\dom\c H}(f,\alpha)=(\dom(H(f,\alpha)),q\ast \theta_{f,\alpha})=(\dom(H(f,\alpha)),\lambda_{f,\alpha})=H(f,\alpha)$$
	$$\ov{\dom\c H}(\delta)=\dom(H(\delta))=H(\delta)$$
	So $\ov{\dom\c H}=H$ and we conclude.
\end{proof}

\begin{rem}
	In order to achieve a full 2-categorical generalization of \thex\ref{colimitsinslicesdim1}, it only remains to address preservation of 2-colimits for $\dom\:\laxslice{\E}{M}\to \E$. The strategy to reach such a result will be to prove that $\dom$ has a strict right-semi-lax (tight) right $\F$-adjoint, assuming products in $\E$. We have not investigated results of preservation of particular colimits for $\dom\:\laxslice{\E}{M}\to \E$ when $\E$ lacks products.

	As in dimension 1, the right adjoint to $\dom$ will be $M\x -$. But since we need to consider the lax slice, we can only find a lax adjunction. Although a lax adjunction would not be enough to guarantee preservation of colimits, a strict right-semi-lax (tight) $\F$-adjunction is enough, as proved in Section~\ref{sectionlaxfadjpreserv}. Notice that $\dom\:\laxslice{\E}{M}\to \E$ has a structure of $\F$-functor, taking all morphisms in $\E$ to be tight.

	The obtained result of preservation of 2-colimits is expressed in an $\F$-categorical language, but remember that any weighted 2-colimit can be reduced to such $\F$-categorical context. See the following results of this section for what the conditions of \thex\ref{teordomhassemilaxFadjoint} mean explicitly in practice.
\end{rem}

\begin{teor}\label{teordomhassemilaxFadjoint}
	Let $\E$ be a $2$-category with products and let $M\in \E$. Then the $\F$-functor $\dom\:\laxslice{\E}{M}\to \E$ has a strict right-semi-lax \pteor{tight} right $\F$-adjoint, given by $M\x -$.
	
	As a consequence, by \thex\ref{teorsemilaxFadjpreserve}, $\dom$ preserves all tight strict/oplax $\F$-colimits, but also all the universal marked oplax $2$-cocylinders for an $\F$-diagram which have tight $\lambda$-components.
\end{teor}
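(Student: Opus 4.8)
The plan is to exhibit $M\times-$ as the desired right $\F$-adjoint by verifying the hypotheses of \prox\ref{univmappingproplaxadj} with $F=\dom$, $\A=\laxslice{\E}{M}$ and $\B=\E$, and then to upgrade the resulting strict right semi-lax adjunction to an $\F$-categorical one. Concretely, for $B\in\E$ I would set $U(B)$ to be the projection $\pr{M}\:M\times B\to M$, viewed as an object of $\laxslice{\E}{M}$ (so that $\dom(U(B))=M\times B$), and take $\eps_B\deq\pr{B}\:M\times B\to B$. This presupposes that the products $M\times B$ exist in $\E$, which I take as a standing hypothesis (it holds in particular when $\E$ is finitely complete, as in the later sections).

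First I would verify the universal property of $\eps_B$ required by \prox\ref{univmappingproplaxadj}. Given an object $\phi\:E\to M$ of the lax slice and a morphism $h\:E=\dom(\phi)\to B$ in $\E$, I define $\ov{h}\:\phi\to U(B)$ to be the $1$-cell of the lax slice whose underlying morphism is $\langle\phi,h\rangle\:E\to M\times B$ and whose filling $2$-cell is the identity $\pr{M}\c\langle\phi,h\rangle=\phi$; in particular $\ov{h}$ is a tight morphism. Since $\eps_B\c\dom(\ov{h})=\pr{B}\c\langle\phi,h\rangle=h$, I can take $\lambda_h\deq\id{h}$. The required $2$-dimensional universal property then reduces to the $2$-categorical universal property of the product $M\times B$ in $\E$: a $2$-cell $\ov{h}\Rightarrow g$ in $\laxslice{\E}{M}$ is exactly a $2$-cell $\langle\phi,h\rangle\Rightarrow\dom(g)$ in $\E$ compatible with the filling $2$-cells, and such a $2$-cell is uniquely determined by its two projections, one of which is forced by the compatibility and the other of which is the prescribed $\sigma$. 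The remaining hypotheses of \prox\ref{univmappingproplaxadj} — the equality $\ov{h}=\ov{h\c\eps_{F(A)}}\c\ov{\id{F(A)}}$, equation~\refs{assumptioncharactlaxadj}, and $\ov{\eps_B}=\id{}$, $\lambda_{\eps_B}=\id{}$ — are routine computations with the product, all trivialized by the fact that every $\lambda_h$ is an identity. Since $\lambda_h=\id{}$ for all $h$, \prox\ref{univmappingproplaxadj} produces a right semi-lax adjunction, with counit $\eps$ strictly $2$-natural (the naturality square $\pr{B'}\c(\id{M}\times f)=f\c\pr{B}$ holds on the nose) and $s=\id{}$.

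Next I would check that this adjunction is in fact strict and that $U$ is a genuine $2$-functor. Unwinding the construction, $U(f)=\ov{f\c\eps_B}$ has underlying morphism $\id{M}\times f$, so $U$ preserves composition and identities strictly and its coassociator and counitor are identities; the unit is $\eta_\phi=\ov{\id{E}}$, with underlying morphism $\langle\phi,\id{E}\rangle$. A direct computation shows $U(\eps_B)\c\eta_{U(B)}=\id{U(B)}$ on the nose, so the modification $t$ is an identity as well, and the adjunction is strict right semi-lax. Finally I would install the $\F$-structures: every morphism of $\E$ is tight, while the tight morphisms of $\laxslice{\E}{M}$ are the triangles filled with an identity, i.e.\ the strict slice $\slice{\E}{M}$ (see \remx\ref{rephrasecorrespondence}). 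Then $\dom$ is an $\F$-functor since $\E$ has only tight morphisms, and $U$ is an $\F$-functor since the filling $2$-cell of $U(f)=\id{M}\times f$ is an identity, making $U(f)$ tight. Both $\eps_B=\pr{B}$ and $\eta_\phi$ (filled with an identity) have tight components; and the structure $2$-cell of $\eta$ on a loose $1$-cell $(\hat\gamma,\gamma)$ is $\langle\gamma,\id{}\rangle$, which is an identity exactly when $\gamma=\id{}$, i.e.\ on tight morphisms. This is precisely the statement that $\eta$ is loose strict/lax $\F$-natural with tight components, so by \defx\ref{deflaxFadj} we obtain a strict right semi-lax tight $\F$-adjunction, and the consequence follows at once from \thex\ref{teorsemilaxFadjpreserve}.

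The main obstacle I expect is not any single computation but keeping the three layers of structure coherent at once: the $2$-dimensional universal property (which lives in $\E$ and rests on the $2$-categorical universal property of products), the lax-slice bookkeeping of filling $2$-cells (which is what forces the unit to be only lax, not strict, natural), and the $\F$-categorical tightness conditions. In particular the crux is verifying that $\eta$ is loose strict/lax $\F$-natural — that its structure $2$-cells are exactly $\langle\gamma,\id{}\rangle$ and therefore vanish precisely on tight morphisms — since this is what makes the adjunction $\F$-categorical rather than a mere strict right semi-lax adjunction of plain $2$-functors, and hence is what allows \thex\ref{teorsemilaxFadjpreserve} to apply.
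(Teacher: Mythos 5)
Your proposal is correct and follows essentially the same route as the paper's own proof: it invokes \prox\ref{univmappingproplaxadj} with $U(B)=(M\x B\ar{\pr{1}}M)$ and $\eps_B=\pr{2}$, takes $\ov{h}=((t,h),\id{})$ tight with $\lambda_h=\id{}$ so that the adjunction comes out strict right semi-lax, and then observes that the structure $2$-cell of $\eta$ on $(\widehat{\gamma},\gamma)$ is $(\gamma,\id{})$ and hence vanishes exactly on tight morphisms, which is the same key point the paper isolates. The only (harmless) addition is your explicit standing hypothesis that the products $M\x B$ exist in $\E$, which the paper leaves implicit.
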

\begin{proof}
	We use the universal mapping property \prox\ref{univmappingproplaxadj} that characterizes a lax adjunction to build a right-semi-lax right adjoint $U$ to $\dom\:\laxslice{\E}{M}\to \E$. For every $E\in \E$, we define $U(E)\deq (M\x E\ar{\pr{1}}E)$ and $\eps_E\:M\x E\ar{\pr{2} }E$, that is tight in $\E$, remembering that in dimension $1$ the domain functor from $\slice{\C}{M}$ is left adjoint to $M\x -$.
	
	We show that such counit is universal in the lax sense. Given $h\:\dom(K\ar{t} M)\to E$ in $\E$, take $\ov{h}\deq ((t,h),\id{})\:(K\ar{t} M)\to (M\x E\ar{\pr{1}}M)$, which is tight in $\laxslice{\E}{M}$ (see \remx\ref{rephrasecorrespondence}), and $\lambda_h\deq \id{}$. 
	\begin{cd}[5][7]
		\dom(K\aar{t}M) \arrow[rd,bend left=25,"{h}",""'{name=C}]\arrow[d,"{\dom((t,h),\id{})}"'] \\
		\dom(M\x E\aar{\pr{1}}M)\arrow[r,"{\pr{2}}"'] \arrow[equal,from=C,"{\lambda_h}"{pos=0.54},shorten <=3.6ex,shorten >= 3.4ex]\& E
	\end{cd}
	This guarantees that we will find a right-semi-lax adjunction in the end (see \prox\ref{univmappingproplaxadj}). Given then another morphism
	\tr[4][5][0][-2][l][-0.3][\gamma][0.85]{K}{M\x E}{M}{\widehat{\gamma}}{t}{\pr{1}}
	in $\laxslice{\E}{M}$ and another $\sigma\:h\aR{}\pr{2}\c\h \widehat{\gamma}$ in $\E$, there is a unique $\delta\:((t,h),\id{})\aR{}(\widehat{\gamma},\gamma)$ in $\laxslice{\E}{M}$ such that
	$$\begin{cd}*[6][8]
		\dom(K\aar{t}M) \arrow[rd,bend left,"{h}",""'{name=C}]\arrow[d,bend left=20,"{\dom((t,h),\id{})}"{pos=0.44},""'{name=A}]\arrow[d,bend right=60,"{\dom(\widehat{\gamma},\gamma)}"'{pos=0.59},""{name=B},shorten <=-0.8ex,shorten >=0.2ex] \\
		\dom(M\x E\aar{\pr{1}}M)\arrow[r,"{\pr{2}}"'] \arrow[equal,from=C,"{\lambda_h}"{pos=0.54},shorten <=4ex,shorten >= 3.7ex]\& E
		\Arb[inner sep=0.55ex]{Rightarrow,shift right=0.2ex,pos=0.47}{\delta}{A}{B}{0.2}
	\end{cd}=\h[4]\begin{cd}*[6][8]
		\dom(K\aar{t}M) \arrow[rd,bend left,"{h}",""'{name=C}]\arrow[d,"{\dom(\widehat{\gamma},\gamma)}"'] \\
		\dom(M\x E\aar{\pr{1}}M)\arrow[r,"{\pr{2}}"'] \arrow[Rightarrow,from=C,"{\sigma}",shorten <=1.7ex,shorten >= 1.9ex]\& E
	\end{cd}\v[2]$$
	Indeed $\delta$ is determined by $\dom(\delta)$, which is $\tc*[7][30][pos=0.53][pos=0.54][][0.3]{K}{M\x E}{(t,h)}{\widehat{\gamma}}{\delta}$,
	that needs to satisfy $\pr{1}\ast \delta=\gamma$ in order to be a $2$-cell $((t,h),\id{})\aR{}(\widehat{\gamma},\gamma)$ and $\pr{2}\ast \delta=\sigma$ by the condition above. So $\delta$ needs to be $(\gamma,\sigma)$, and this works.
	
	We then see that, for every $E\in \E$, $\ov{\eps_{E}}=\id{}$ since in this case $(t,h)=(\pr{1},\pr{2})=\id{}$ (and $\lambda$ is always the identity). Moreover, for every $h\:\dom(K\ar{t} M)\to E$ in $\E$,
	$$\ov{h\c\h\eps_{F(A)}}\c \ov{\id{F(A)}}=((\pr{1},h\c \pr{2}),\id{})\c ((t,\id{}),\id{})=((t,h),\id{})=\ov{h},$$
	making the assumption of equation \refs{assumptioncharactlaxadj} (of \prox\ref{univmappingproplaxadj}) hold.
	
	By \prox\ref{univmappingproplaxadj}, as $\lambda_h$ is always the identity, $U$ extends to an oplax functor, $\eps$ extends to a $2$-natural transformation and there exist a lax natural transformation $\eta$ and a modification $t$ such that $U$ is a right-semi-lax right adjoint to $\dom\:\laxslice{\E}{M}\to \E$. But it is easy to see, following the explicit construction of \prox\ref{univmappingproplaxadj}, that $U$ is the (strict) $2$-functor
	\begin{fun}
		M\x - & \: & \E \hphantom{...}& \too &\hphantom{..} \laxslice{\E}{M} \\[1ex]
		& & E\hphantom{...} & \mto & \p{M\x E\ar{\pr{1}}E} \\[1ex]
		& & E\ar{e} E' & \mto &\hphantom{..} (\id{}\x e,\id{})\\[0.8ex]
		& & e\aR{\beta}e' \hphantom{.}& \mto &\hphantom{..} \id{}\x \beta
	\end{fun}
	Then, for every $(K\ar{t}M)\in \laxslice{\E}{M}$,
	$$\eta_t=\h[4]\tr*[4][5][0][-2][e][-0.3][\id{}][1.2]{K}{M\x K}{M}{(t,\id{})}{t}{\pr{1}}$$
	The fact that $\ov{h}$ is always tight implies that $U=M\x -$ is an $\F$-functor and that $\eta$ has tight components. Given a morphism $(\widehat{\gamma},\gamma)\:(K\ar{t}M)\to (K'\ar{t'}M)$ in $ \laxslice{\E}{M}$,
	$$\eta_{(\widehat{\gamma},\gamma)}\h[2]=\quad\begin{cd}*[1.5][2.5]
		\& M\x K \arrow[rd,"{\id{}\x \widehat{\gamma}}"{inner sep=0.3ex}] \arrow[dd,Rightarrow,"{(\gamma,\id{})}",shorten <=1.25ex,shorten >=1.2ex]\\
		K\arrow[ru,"{(t,\id{})}"{inner sep=0.3ex}] \arrow[rd,"{\widehat{\gamma}}"']\& \& M\x K' \\
		\& K' \arrow[ru,"{(t',\id{})}"'{pos=0.35,inner sep=0.3ex}]
	\end{cd}$$
	whence it is clear that $\eta$ is (tight) strict/lax $\F$-natural (since $\eta_{(\widehat{\gamma},\id{})}=\id{}$). Finally, $t=\id{}$, giving a strict right-semi-lax adjunction. We have also already checked that $\dom\:\laxslice{\E}{M}\to \E$ and $M\x -$ are $\F$-functors, $\eta$ is (tight) strict/lax $\F$-natural and $\eps$ has tight components, giving a strict right-semi-lax (tight) $\F$-adjunction.
\end{proof}

By \recx\ref{rectwocatofel}, cartesian-marked oplax conical 2-colimits (and thus weighted 2-colimits alike) are inscribed in the $\F$-categorical context of universal marked oplax 2-cocylinders. We obtain the following.

\begin{coroll}\label{remexplicitpresdom}
	Let $\E$ be a $2$-category with products and let $M\in \E$. The 2-functor $\dom\:\laxslice{\E}{M}\to \E$ preserves all the universal cartesian-marked oplax $2$-cocones for an $\F$-diagram which have tight components.

	That is, given 2-functors $W\:\A\op\to \CAT$ with $\A$ small and $H\:\Groth{W}\to \laxslice{\E}{M}$ that sends every morphism of type $(f,\id{})$ to a triangle filled with an identity, if
	$$\zeta\:\Delta 1\aoplaxn{}\HomC{\laxslice{\E}{M}}{ H(-)}{q}$$
	is a universal cartesian-marked oplax $2$-cocone for $H$ on $q\in \laxslice{\E}{M}$ exhibiting $q=\oplaxncolim{H}$ such that $\zeta_{(A,X)}$ is a triangle filled with an identity for every $(A,X)\in \Groth{W}$, then $\dom\c \h\zeta$ is universal as well, exhibiting $$\dom(q)=\oplaxncolim{\left(\dom\c H\right)}.$$ 
\end{coroll}

\begin{rem}\label{remsharperpreservationdom}
	We can actually obtain a sharper result of preservation of $2$-colimits for $\dom\:\laxslice{\E}{M}\to \E$, as we show in \prox\ref{propsharperpreservationdom}. Namely, we can omit the assumption that the universal cartesian-marked oplax 2-cocones for an $\F$-diagram have tight $\lambda$-components. Indeed, in the proof of \thex\ref{teorsemilaxFadjpreserve}, the preservation of the universal marked oplax 2-cocylinder uses the assumption that the $\mu^\lambda_{A}(X)$'s are tight only to guarantee the uniqueness parts of the $1$- and $2$-universal property. But we can prove both uniqueness results in another way, taking advantage of the simple description of the strict right-semi-lax right $\F$-adjoint $U=M\x -$ of $\dom$.
\end{rem}

\begin{prop}\label{propsharperpreservationdom}
	Let $\E$ be a $2$-category with products and let $M\in \E$. The 2-functor $\dom\:\laxslice{\E}{M}\to \E$ preserves all the universal cartesian-marked oplax $2$-cocones for an $\F$-diagram \pteor{without assuming them to have tight $\lambda$-components}.
\end{prop}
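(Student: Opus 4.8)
The plan is to reuse the proof of \thex\ref{teorsemilaxFadjpreserve} verbatim for the \emph{existence} halves of the $1$- and $2$-dimensional universal properties of $\dom\c\zeta$, and to supply a new argument only for the two \emph{uniqueness} halves. Inspecting that proof, the construction of the factorizations $\delta\deq T(\gamma)$ and $\Delta\deq T(\Gamma)$ (via universality of $\zeta$ in the lax slice applied to $S\c\sigma$, followed by $T\c S=\Id{}$ and the commutativity of the right square, which holds since $\dom$ is a $2$-functor) never invokes tightness of the cocone components; so those parts carry over unchanged. Throughout I keep the notation of \thex\ref{teordomhassemilaxFadjoint}, writing the apex object as $q\:C\to M$ (so $C=\dom(q)$) and recording the two features of the adjunction $\dom\dashv M\x-$ that will do all the work: $T\c S=\Id{}$, and, for every $h\:C\to Z$ in $\E$, the unit-transpose $S(h)=\p{(q,h),\id{}}$ is a \emph{tight} morphism of $\laxslice{\E}{M}$ with $T(S(h))=\pr{2}\c(q,h)=h$.

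For $1$-dimensional uniqueness, suppose $\delta,\delta'\:C\to Z$ both satisfy $(\delta\c-)\c(\dom\c\zeta)=(\delta'\c-)\c(\dom\c\zeta)$. Reading this equality of oplax normal $2$-cocones off on object and morphism components gives $\delta\c\widehat{\zeta_{(A,X)}}=\delta'\c\widehat{\zeta_{(A,X)}}$ and $\delta\ast\widehat{\zeta_{f,\alpha}}=\delta'\ast\widehat{\zeta_{f,\alpha}}$ for all $(A,X)$ and $(f,\alpha)$ in $\Groth{W}$. I would then compare the two oplax normal $2$-cocones $(S(\delta)\c-)\c\zeta$ and $(S(\delta')\c-)\c\zeta$ for $H$ on $M\x Z$ in the lax slice. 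Since $S(\delta)$ is tight, its component triangle $S(\delta)\c\zeta_{(A,X)}$ retains the filling $2$-cell of $\zeta_{(A,X)}$ untouched and has underlying map $(q,\delta)\c\widehat{\zeta_{(A,X)}}=\p{q\c\widehat{\zeta_{(A,X)}},\ \delta\c\widehat{\zeta_{(A,X)}}}$, and likewise for the morphism components through $\widehat{\zeta_{f,\alpha}}$. By the displayed equalities the underlying maps (and underlying $2$-cells) coincide with those for $\delta'$ while the fillers are literally the same, so the two cocones are equal. Universality of $\zeta$ in $\laxslice{\E}{M}$ then forces $S(\delta)=S(\delta')$, whence $\delta=T(S(\delta))=T(S(\delta'))=\delta'$.

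The $2$-dimensional uniqueness is handled one level up. Given $\Delta,\Delta'\:\delta\aR{}\delta'$ with $(\Delta\ast-)\ast(\dom\c\zeta)=(\Delta'\ast-)\ast(\dom\c\zeta)$, read off $\Delta\ast\widehat{\zeta_{(A,X)}}=\Delta'\ast\widehat{\zeta_{(A,X)}}$. Since $S$ is an ordinary functor on the relevant hom-categories, $S(\Delta),S(\Delta')\:S(\delta)\aR{}S(\delta')$ are $2$-cells of the lax slice whose underlying $\E$-$2$-cells $(q,\delta)\aR{}(q,\delta')$ have $\pr{1}$-component $\id{q}$ and $\pr{2}$-component $\Delta$, resp.\ $\Delta'$; whiskering with the component $\zeta_{(A,X)}$ yields underlying $2$-cell with $\pr{2}$-component $\Delta\ast\widehat{\zeta_{(A,X)}}=\Delta'\ast\widehat{\zeta_{(A,X)}}$, so $S(\Delta)\ast\zeta_{(A,X)}=S(\Delta')\ast\zeta_{(A,X)}$. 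The $2$-dimensional universality of $\zeta$ then gives $S(\Delta)=S(\Delta')$, and $T\c S=\Id{}$ on $2$-cells gives $\Delta=\Delta'$.

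The main obstacle, and the precise point where the sharpening happens, is the filler computation in the second paragraph: one must verify that the filling $2$-cell of the triangle $S(\delta)\c\zeta_{(A,X)}$ is $\zeta_{(A,X)}$ itself, \emph{independently of $\delta$}. This is exactly what the tightness of $S(\delta)=\p{(q,\delta),\id{}}$ buys us, and it is the feature of the specific right adjoint $M\x-$ that lets us drop the hypothesis that the $\zeta_{(A,X)}$ be tight: in \thex\ref{teorsemilaxFadjpreserve} the tightness of the $\lambda$-components was used precisely to force $\eta_{\mu^\lambda_A(X)}=\id{}$, whereas here the needed identities come for free from the transpose $S$ landing in tight morphisms. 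Once this is in hand, universality of $\zeta$ in the lax slice together with $T\c S=\Id{}$ does the rest, with no change to the existence arguments already provided by \thex\ref{teorsemilaxFadjpreserve}.
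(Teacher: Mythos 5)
Your proof is correct and follows essentially the same strategy as the paper's: keep the existence halves supplied by \thex\ref{teorsemilaxFadjpreserve} and redo only the two uniqueness halves by exploiting the explicit product right adjoint $M\x -$. The only (immaterial) difference is that the paper compares a hybrid triangle $\gamma'$ (built from $\delta'$ together with the first projection and filler of the canonical $\gamma$) against $\gamma$ and invokes the uniqueness of $\gamma$, whereas you compare the two tight transposes $S(\delta)$ and $S(\delta')$ directly and invoke the uniqueness clause in the universality of $\zeta$; both arguments hinge on the same componentwise determination of morphisms into $M\x Z$.
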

\begin{proof}
	We only need to prove the uniqueness part of the $1$- and $2$-dimensional universal property, by \remx\ref{remsharperpreservationdom}. Following the proof of \thex\ref{teorsemilaxFadjpreserve} with $F=\dom$ and considering $\delta'\:\dom(K\ar{t} M)\to Z$ in $\E$ such that $(\delta'\c -)\c \dom\c \mu=\sigma$, rather than considering $S(\delta')=((t,\delta'),\id{})$, we define
	$$\gamma'\deq \h[4]\tr*[5][5][0][-2][l][-0.3][\gamma][0.85]{K}{M\x Z}{M}{(\pr{1}\c \dom(\gamma),\delta')}{t}{\pr{1}}$$
	Then $\gamma'$ satisfies $(\gamma'\c -)\c \mu=S\c \sigma$, by the universal property of the product, since $\gamma$ satisfies the analogous equation and $(\delta'\c -)\c \dom\c \mu=\sigma$. By the uniqueness of $\gamma$, we obtain that $\gamma'=\gamma$ and hence $\delta'=\pr{2}\c \dom(\gamma)=T(\gamma)=\delta$. 
	
	Analogously, we can prove also the uniqueness of the $2$-dimensional universal property, producing from $\Delta'$ the $2$-cell $(\pr{1}\ast \dom(\Gamma),\Delta')$ between the two suitable triangles $\gamma'$ here above. We indeed obtain $\Delta'=\pr{2}\ast \dom(\Gamma)=T(\Gamma)=\Delta$.
\end{proof}

We now shed more light on the $\F$-categorical assumptions of \prox\ref{propsharperpreservationdom}, \thex\ref{teordomhassemilaxFadjoint} and \prox\ref{proplifting}. At the same time, we characterize which 2-colimits in the lax slice come from 2-colimits of the base 2-category via the work of Section~\ref{sectioncolimitsin2slices}.

\begin{teor}\label{teorcolimitsinlaxslicesthatcomefromthebase}
	Let $\E$ be a 2-category with products and let $q\:C\to M$ in $\E$. The following are equivalent:
	\begin{enumT}
		\item $q$ can be expressed as a 2-colimit in $\laxslice{\E}{M}$ that comes from a \pteor{weighted or cartesian-marked oplax conical} 2-colimit in $\E$ via the work of Section~\ref{sectioncolimitsin2slices} \pteor{see \conx\ref{consfirstapproach} or \thex\ref{teordomis2colimfib}};
		\item $q$ can be expressed as a strict/oplax conical $\F$-colimit of an $\F$-diagram $\Groth{W}\to\laxslice{\E}{M}$;
		\item $q$ can be expressed as the cartesian-marked oplax conical 2-colimit in $\laxslice{\E}{M}$ of an $\F$-diagram, with the universal 2-cocone having tight components;
		\item $q$ can be expressed as the cartesian-marked oplax conical 2-colimit in $\laxslice{\E}{M}$ of an $\F$-diagram;
		\item $q$ can be expressed as a \pteor{weighted or cartesian-marked oplax conical} 2-colimit in $\laxslice{\E}{M}$ that is preserved by $\dom\:\laxslice{\E}{M}\to \E$.
	\end{enumT}
\end{teor}
\begin{proof}
	$(i)\aR{}(ii)$ holds by \remx\ref{remreflectioninsideFcattheory}. $(ii)\aR{}(iii)\aR{}(iv)$ is trivial. $(iv)\aR{}(v)$ holds by \prox\ref{propsharperpreservationdom}.

	We show $(v)\aR{}(i)$. By assumption, the domain of $q$ needs to be a 2-colimit in $\E$. We then conclude by \thex\ref{teordomis2colimfib}.
\end{proof}

\begin{rem}\label{exacolimitsthatcomefromthetwocat}
	By \thex\ref{teorcolimitsinlaxslicesthatcomefromthebase}, all 2-colimits in $\laxslice{\E}{M}$ that come from 2-colimits in $\E$ via the work of Section~\ref{sectioncolimitsin2slices} satisfy the conditions of \prox\ref{propsharperpreservationdom}, and actually also of \corx\ref{remexplicitpresdom}.

	Moreover, we will see in \exax\ref{exacolimthatcomefromthetwocattaust} that applying the change of base 2-functors of Section~\ref{sectionchangeofbase} to such 2-colimits gives again 2-colimits that satisfy the conditions of \prox\ref{propsharperpreservationdom}. This means that the conditions of \prox\ref{propsharperpreservationdom} are satisfied by most of the colimits we have in practice. For example, all the colimits in 2-dimensional slices that we needed in order to develop our~\cite{mesiti_twoclassifiersdensegenstacks} satisfy such conditions.

	\thex\ref{teorcolimitsinlaxslicesthatcomefromthebase} also shows that our result of preservation of 2-colimits for $\dom\:\laxslice{\E}{M}\to \E$ is sharp, at least when $\E$ has products.

	Notice that, as opposed to what happens in dimension $1$, there is more space in the 2-dimensional $\laxslice{\E}{M}$ allowing the presence of worse-behaved colimits that do not come from $\E$. After all, the domain 2-functor now only has a nice lax right adjoint rather than a strict one.
\end{rem}

\begin{exampl}\label{exaspecificcolimitsinlaxslice}
	In practice, most of the 2-colimits in $\laxslice{\E}{M}$ arise from given 2-colimits in $\E$, via the work of Section~\ref{sectioncolimitsin2slices}. But it is also interesting to ask whether a specific shape of 2-colimits in $\laxslice{\E}{M}$ comes from 2-colimits in $\E$.

	All coproducts, copowers, coequalizers of tight morphisms, coinserters of tight morphisms and coequifiers of 2-cells between tight morphisms in $\laxslice{\E}{M}$ can be expressed as cartesian-marked oplax conical 2-colimits of an $\F$-diagram. Indeed, it is straightforward to check that the essential conicalization of such weighted 2-colimits, as described in \recx\ref{rectwocatofel}, gives an $\F$-diagram $F\c \groth{W}$. Conical 2-colimits, such as coproducts and coequalizers, are readily seen to also be cartesian-marked oplax conical 2-colimits, with $\groth{W}=\id{}$ and all morphisms in $\Groth{W}$ being tight. For copowers, identities are the only tight morphisms in the corresponding $\Groth{W}$.

	So all coproducts, copowers, coequalizers of tight morphisms, coinserters of tight morphisms and coequifiers of 2-cells between tight morphisms in $\laxslice{\E}{M}$ satisfy the equivalent conditions of \thex\ref{teorcolimitsinlaxslicesthatcomefromthebase}.

	By the same argument, thanks to \prox\ref{proplifting}, if the base 2-category has all 2-colimits of any of the shapes above, the lax slice has all 2-colimits of the same shape as well, calculated as in the proof of \prox\ref{proplifting}.
\end{exampl}

\section{Change of base between lax slices}\label{sectionchangeofbase}

In dimension $1$, the concept of change of base between slices is definitely helpful. And it is well-known that the pullback perfectly realizes such a job. For $\CAT$, given a functor $\tau\:\E\to \B$, we can still consider the pullback $2$-functor $\tau\st\:\slice{\CAT}{\B}\to \slice{\CAT}{\E}$ between strict slices. And it is well-known that such change of base functor has a right 2-adjoint $\tau\stb$, and thus preserves all weighted 2-colimits, precisely when $\tau$ is a Conduch\'{e} functor.

However, Section~\ref{sectioncolimitsin2slices} showed that, in order to generalize the calculus of colimits in $1$-dimensional slices to dimension $2$, one needs to consider lax slices. And it is then very helpful to have a change of base $2$-functor between lax slices of a finitely complete $2$-category. Taking general pullbacks does not produce a functor between lax slices. We believe that the most natural way to achieve a change of base 2-functor between lax slices is to take comma objects. Equivalently, we can take pullbacks along split Grothendieck opfibrations (that serve as a kind of fibrant replacement), see \prox\ref{propreplacement}. Such a point of view is preferable in the context of this section, since Grothendieck opfibrations in $\CAT$ are always Conduch\'{e} and we can generalize the ideas for finding a right adjoint to the pullback functor $\tau\st\:\slice{\CAT}{\B}\to \slice{\CAT}{\E}$ (see Conduch\'{e}'s~\cite{conduche_existadjointsadroitcondfunc}) to lax slices.

We take Street's~\cite{street_fibandyonlemma} and Weber's~\cite{weber_yonfromtwotop} as main references for Grothendieck opfibrations in a general $2$-category. We prove that if $\tau\:\E\to \B$ is a split Grothendieck opfibration in a $2$-category $\K$, then pulling back along $\tau$ extends to a $2$-functor $\tau\st\:\laxslice{\K}{\B}\to \laxslice{\K}{\E}$.

Furthermore, we prove that when $\K=\CAT$, the $2$-functor $\tau\st$ between lax slices has a strict right-semi-lax loose right $\F$-adjoint. This generalizes Palmgren's~\cite{palmgren_groupoidsloccartclos}, that proved a similar result for pseudoslices of groupoids, from the comma objects point of view. As a consequence to such adjunction result, by \thex\ref{teorsemilaxFadjpreserve}, $\tau\st\:\laxslice{\CAT}{\B}\to \laxslice{\CAT}{\E}$ preserves all the universal marked oplax $2$-cocylinders for an $\F$-diagram which have tight $\lambda$-components. Remember that the general context of universal marked oplax $2$-cocylinders includes the one of weighted $2$-colimits, after reducing them to cartesian-marked oplax conical ones. We then extend this result of preservation of $2$-colimits for $\tau\st$ to more general $2$-categories other than $\CAT$: firstly to prestacks (\prox\ref{proptaustprestacks}) and then to any finitely complete $2$-category with a dense generator (\thex\ref{teortaustdensegenerator}).

\begin{rem}
	Reading the proofs of this section, it will be clear that it is enough to assume $\K$ to have pullbacks along split opfibrations (and comma objects for \prox\ref{propreplacement}), rather than all finite limits.
\end{rem}

\begin{prop}\label{propreplacement}
	Let $\K$ be a finitely complete $2$-category and let $\rho\:\J\to \B$ be a morphism in $\K$. Then taking comma objects along $\rho$ is equivalent to taking \pteor{strict $2$-}pullbacks along the free Grothendieck opfibration $\partial_1\:\slice{\rho}{\B}\to \B$ on $\rho$, which is split.
	\begin{eqD*}
	\begin{cd}*[6][7]
		\P \arrow[r,"{}"] \arrow[d,"{}"]\& \J \arrow[d,"{\rho}"]\arrow[ld,Rightarrow,shorten <=2.7ex,shorten >=2.2ex,"\opn{comma}"{pos=0.65}]\\
		\A \arrow[r,"{F}"']\&\B 
	\end{cd}\qquad\quad
	\begin{cd}*[6][7]
		\P \PB{rd}\arrow[d,"{F\st \partial_1}"'] \arrow[r,"{\partial_1\st F}"] \& \slice{\rho}{\B} \arrow[d,"{\partial_1}"'] \arrow[r,"\partial_0"]\& \J \arrow[d,"{\rho}"]\arrow[ld,Rightarrow,shorten <=2.7ex,shorten >=2.2ex,"\opn{comma}"{pos=0.65}] \\
		\A\arrow[r,"F"'] \& \B \arrow[r,equal]\& \B
	\end{cd}
	\end{eqD*}
\end{prop}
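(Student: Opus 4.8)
The plan is to establish the two internal assertions about $\partial_1\:\slice{\rho}{\B}\to \B$ separately, and then obtain the claimed equivalence of constructions from a pasting argument, reducing every $2$-categorical statement to an elementary one in $\CAT$ by representability. Since $\K$ is finitely complete it has all comma objects, so I would begin by forming $\slice{\rho}{\B}$ as the comma object of $\rho$ over $\id{\B}$, with its two projections $\partial_0\:\slice{\rho}{\B}\to \J$ and $\partial_1\:\slice{\rho}{\B}\to \B$ and its structural $2$-cell $\omega\:\rho\c\partial_0\aR{}\partial_1$.

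To see that $\partial_1$ is a split Grothendieck opfibration in the sense of Street's~\cite{street_fibandyonlemma} and Weber's~\cite{weber_yonfromtwotop}, I would use the representable definition: $\partial_1$ is a split opfibration precisely when $\K(K,\partial_1)$ is one for every $K\in \K$, with the chosen cleavage stable under precomposition with $1$-cells $K'\to K$. By the universal property of the comma object, $\K(K,\slice{\rho}{\B})$ is naturally isomorphic to the comma category of $\rho\c -\:\K(K,\J)\to \K(K,\B)$ over the identity, and under this identification $\K(K,\partial_1)$ is its codomain projection. It is then an elementary fact in $\CAT$ that the codomain projection of a comma category $g\down\id{B}$, for any functor $g\:J\to B$, is a split opfibration: the opcartesian lift of $\beta\:b\to b'$ at an object $(j,\theta\:gj\to b)$ is $(\id{j},\beta)\:(j,\theta)\to (j,\beta\c\theta)$, and these lifts compose strictly, giving the splitting. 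The content to check is that this canonical cleavage is natural in $K$, i.e.\ preserved by the precomposition functors; this holds because the lift is built only from composition and identities in $\B$, which whiskering respects. This naturality is exactly what promotes the pointwise structure to an internal split opfibration, and identifies $\partial_1$ as the free opfibration on $\rho$. (Alternatively one could exhibit a left adjoint to the canonical $1$-cell $\slice{\rho}{\B}\to \partial_1\down\id{\B}$ directly, via Street's adjoint characterization, but the computation is the same.)

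For the equivalence of the two diagrams, I would prove the pasting statement that the strict pullback of $F$ along $\partial_1$, together with the structural $2$-cell $\omega$, is a comma object for the cospan $\A\ar{F}\B\al{\rho}\J$. Again it suffices, by representability, to verify this after applying $\K(K,-)$, which preserves both comma objects and strict pullbacks: there it reduces to the elementary fact that in $\CAT$ the comma category of $F\c -$ and $\rho\c -$ is the strict pullback of $F\c -$ along the codomain projection $\K(K,\partial_1)$, both sides having as objects the triples $(u\:K\to \A,\ v\:K\to \J,\ \rho\c v\aR{}F\c u)$ and agreeing on morphisms and $2$-cells. Since both the pullback and the comma object are representable $2$-limits and the identification is $2$-natural in $K$, the $2$-dimensional Yoneda lemma yields the required isomorphism between them, compatible with all projections and structural $2$-cells.

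The step I expect to be the main obstacle is the internal opfibration claim, and specifically the stability of the canonical cleavage under precomposition: this is precisely the content that separates an internal split opfibration from a merely pointwise one, and it is where the finite completeness of $\K$ (ensuring the comma object and the $2$-naturality of its universal property) is really used. Once it is in place, both the recognition of $\partial_1$ as the free split opfibration and the comma-equals-pullback identification follow formally from the universal properties.
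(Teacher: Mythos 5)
Your proposal is correct and follows essentially the same route as the paper, which simply observes that it suffices to check the right-hand diagram has the universal property of the comma object and cites Street's~\cite{street_fibandyonlemma} for $\partial_1$ being the free split opfibration on $\rho$. You merely fill in, via representability and the elementary $\CAT$-level computations, the two facts the paper delegates to that reference and to the reader.
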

\begin{proof}
	It suffices to check that the diagram on the right above has the universal property of the comma object on the left, for every $F\:\A\to \B$ in $\K$. It is known that $\partial_1$ is the free Grothendieck opfibration on $\rho$ and that it is split, see Street's~\cite{street_fibandyonlemma}.
\end{proof}

\begin{prop}\label{proptaustextendstoatwofunctor}
	Let $\K$ be a $2$-category with pullbacks and fix a choice of all pullbacks. Let then $\tau\:\E\to \B$ be a split Grothendieck opfibration in $\K$. Pulling back along $\tau$ extends to an $\F$-functor
	$$\tau\st\:\laxslice{\K}{\B}\to \laxslice{\K}{\E}$$
	\pteor{considering the canonical $\F$-category structure on the lax slice described in \remx\ref{remreflectioninsideFcattheory}, i.e.\ with the tight part given by the strict slice}.
\end{prop}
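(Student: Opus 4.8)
The plan is to build $\tau\st$ by pulling everything back to $\E$ and using the split opfibration structure of $\tau$ to transport the laxity of the triangles. Recall that, in the internal sense taken from Street's and Weber's work, the split opfibration $\tau$ provides on each hom-category $\K(-,\tau)$ a chosen opcartesian lifting of $2$-cells, natural in the domain object and strictly functorial in the lifted $2$-cell. Fix, for each object $g\:X\to\B$ of $\laxslice{\K}{\B}$, the chosen (strict $2$-)pullback $P_g\deq X\x[\B]\E$ of $g$ along $\tau$, with projections $p_g\:P_g\to X$ and $\pi_g\:P_g\to\E$ satisfying $g\c p_g=\tau\c\pi_g$, and set $\tau\st(g)\deq\pi_g$. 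On a $1$-cell $(\widehat\gamma,\gamma)\:g\to g'$ (so $\widehat\gamma\:X\to X'$ and $\gamma\:g\aR{}g'\c\widehat\gamma$) I would whisker to obtain $\gamma\ast p_g\:\tau\c\pi_g=g\c p_g\aR{}g'\c\widehat\gamma\c p_g$ and take its chosen opcartesian lift $\bar\gamma\:\pi_g\aR{}\ell$ along $\tau$, so that $\tau\ast\bar\gamma=\gamma\ast p_g$ and $\tau\c\ell=g'\c\widehat\gamma\c p_g$. Then $(\widehat\gamma\c p_g,\ell)$ satisfies $g'\c(\widehat\gamma\c p_g)=\tau\c\ell$ and induces, by the $1$-dimensional universal property of $P_{g'}$, a unique $v\:P_g\to P_{g'}$ with $p_{g'}\c v=\widehat\gamma\c p_g$ and $\pi_{g'}\c v=\ell$. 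Since $\pi_{g'}\c v=\ell$, the opcartesian $2$-cell $\bar\gamma\:\pi_g\aR{}\pi_{g'}\c v$ is exactly the filling of a triangle in $\laxslice{\K}{\E}$, so I set $\tau\st(\widehat\gamma,\gamma)\deq(v,\bar\gamma)$.

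For $2$-cells, a $\Gamma\:(\widehat\gamma,\gamma)\aR{}(\widehat{\gamma'},\gamma')$ is a $\Gamma\:\widehat\gamma\aR{}\widehat{\gamma'}$ in $\K$ with $(g'\ast\Gamma)\c\gamma=\gamma'$. The opcartesian universal property of $\bar\gamma$, applied to $\bar{\gamma'}$ and the factorization $\tau\ast\bar{\gamma'}=\gamma'\ast p_g=((g'\ast\Gamma)\ast p_g)\c(\gamma\ast p_g)$, yields a unique $w\:\ell\aR{}\ell'$ over $g'\ast\Gamma\ast p_g$ with $w\c\bar\gamma=\bar{\gamma'}$. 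Pairing $(\Gamma\ast p_g,\,w)$ through the $2$-dimensional universal property of $P_{g'}$ (the two $2$-cells agree in $\B$, both being $g'\ast\Gamma\ast p_g$) produces a unique $\tau\st(\Gamma)\:v\aR{}v'$, and $w\c\bar\gamma=\bar{\gamma'}$ is precisely the condition that makes $\tau\st(\Gamma)$ a $2$-cell of $\laxslice{\K}{\E}$.

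The real work is in the $2$-functoriality axioms. Preservation of identity $1$-cells is immediate, since the chosen opcartesian lift of an identity $2$-cell is an identity, forcing $\ell=\pi_g$ and $v=\id{}$. For composition, the composite of $(\widehat\gamma,\gamma)$ and $(\widehat{\gamma'},\gamma')$ in $\laxslice{\K}{\B}$ is $(\widehat{\gamma'}\c\widehat\gamma,\,(\gamma'\ast\widehat\gamma)\c\gamma)$, and by the uniqueness in the universal property of $P_{g''}$ the identity $\tau\st\big((\widehat{\gamma'},\gamma')\c(\widehat\gamma,\gamma)\big)=\tau\st(\widehat{\gamma'},\gamma')\c\tau\st(\widehat\gamma,\gamma)$ reduces to the equality of opcartesian lifts
$$\bar{\gamma''}=(\bar{\gamma'}\ast v)\c\bar\gamma.$$
This is exactly where splitness is essential: since $p_{g'}\c v=\widehat\gamma\c p_g$, naturality of the chosen cleavage under precomposition with $v$ identifies $\bar{\gamma'}\ast v$ with the chosen opcartesian lift of $\gamma'\ast p_{g'}\ast v$ at $\pi_{g'}\c v$, and then strict functoriality of the split cleavage, applied to $\gamma''\ast p_g=((\gamma'\ast\widehat\gamma)\ast p_g)\c(\gamma\ast p_g)$, gives the displayed equality; in particular its target $\ell''=\ell'\c v=\pi_{g''}\c v'\c v$, whence $v''=v'\c v$. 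The $2$-dimensional functoriality is analogous, using uniqueness of liftings together with the $2$-dimensional universal property of the pullbacks. I expect this coherence bookkeeping — matching the two pullback universal properties against the naturality and splitness of the cleavage — to be the main obstacle; everything else is formal.

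Finally, $\tau\st$ is an $\F$-functor. A tight $1$-cell of $\laxslice{\K}{\B}$ is a triangle filled with an identity, i.e.\ $\gamma=\id{}$; then $\gamma\ast p_g=\id{}$, its chosen opcartesian lift is an identity, $\ell=\pi_g$, and thus the filling $2$-cell of $\tau\st(\widehat\gamma,\id{})$ is an identity. Hence $\tau\st$ sends tight morphisms to tight morphisms, which, recalling from \recx\ref{recFcategorytheory} that the tight parts of the lax slices are the strict slices, is exactly the assertion that $\tau\st$ is an $\F$-functor.
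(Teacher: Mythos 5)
Your proposal is correct and follows essentially the same route as the paper: objects via chosen pullbacks, $1$-cells via the chosen opcartesian lift of the whiskered triangle followed by factorization through the target pullback, functoriality via the naturality of the cleavage under precomposition together with splitness, and $\F$-functoriality via normality of the cleavage. The only (immaterial) presentational difference is that for $2$-cells the paper lifts along the pulled-back opfibration $(F')\st\tau$ with its induced cleavage, whereas you lift along $\tau$ itself and then invoke the $2$-dimensional universal property of the pullback.
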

\begin{proof}
	Given a morphism $F\:\A\to \B$ in $\K$, we define $\tau\st F$ as the upper morphism of the chosen pullback square in $\K$ on the left below. Given then a morphism in $\laxslice{\K}{B}$ as in the middle below, we can lift the $2$-cell in $\K$ on the right below
	\begin{eqD*}
	\sq*[p][7][8]{\P}{\E}{\A}{\B}{\tau\st F}{F\st \tau}{\tau}{F}
	\qquad\quad \tr*[4][5.5][0][0][l][-0.3][\alpha][0.85]{\A}{\A'}{\B}{\widehat{\alpha}}{F}{F'}
	\qquad\quad
	\begin{cd}*[3.5][4.4]
		\P \arrow[rr,"{\tau\st F}"]\arrow[d,"{F\st \tau}"'] \&\& \E \arrow[d,"{\tau}"] \\
		\A \arrow[rd,"{\widehat{\alpha}}"'{inner sep=0.3ex}]\arrow[rr,"{F}",""'{name=A}] \&\& \B \\
		\& \A' \arrow[from=A,Rightarrow,"{\alpha}"{pos=0.4},shorten <=0.6ex,shorten >=1ex]\arrow[ru,"{F'}"'{inner sep =0.5ex}]
	\end{cd}
	\end{eqD*}
	along the Grothendieck opfibration $\tau$, producing the chosen cartesian $2$-cell $\tau\st{\alpha}\:\tau\st F\aR{}V\:\P\to \E$ (in the cleavage) with $\tau\c V= F'\c \widehat{\alpha}\c F\st \tau$ and $\tau\ast \tau\st{\alpha}=\alpha\ast F\st \tau$. Using then the universal property of the pullback $\P'$ of $\tau$ and $F'$ we can factorize $V$ through $\tau\st F'$, obtaining a morphism $\widehat{\tau\st \alpha}\:\P\to \P'$. We define $\tau\st \alpha$ to be the upper triangle in the following commutative solid:
	\begin{cd}[2.6][9.3]
		\P \arrow[rd,"{\widehat{\tau\st \alpha}}"'{pos=0.56,inner sep=0.3ex}]\arrow[rrd,bend left=25,"{\tau\st F}",""'{name=C}]\arrow[dd,"{F\st \tau}"']\&[-4.5ex]\\
		\&\P'\arrow[from=C,Rightarrow,"{\tau\st{\alpha}}"{pos=0.35},shorten <=0.5ex,shorten >=0.9ex]\arrow[r,"{\tau\st F'}"]\& \E \arrow[dd,"{\tau}"] \\
		\A \arrow[rd,"{\widehat{\alpha}}"']\arrow[rrd,bend left=25,"{F}",""'{name=A}]\\
		\&\A' \arrow[uu,leftarrow,"{{(F')}\st \tau}"{pos=0.76},crossing over]\arrow[r,"{F'}"'{inner sep =0.5ex}] \arrow[from=A,Rightarrow,"{\alpha}"{pos=0.44},shorten <=0.5ex,shorten >=0.9ex]\& \B
	\end{cd}
	It is straightforward to check that $\tau\st$ is functorial, since $\tau$ is a split Grothendieck opfibration. For this, remember that a cleavage is the choice of a left adjoint to $\eta_\tau\:\E\to \slice{\tau}{\B}$, where the latter is the morphism induced by the identity $2$-cell on $\tau$. Such a choice then determines the liftings of the Grothendieck opfibrations $(\tau\c -)\:\HomC{\K}{\X}{\E}\to \HomC{\K}{\X}{\B}$ in $\CAT$ that we have for every $\X\in \K$, by using the universal property of $\slice{\tau}{\B}$ (to factorize the $2$-cells we want to lift). So notice that taking $(\widehat{\alpha},\alpha)\:F\to F'$ as above and $(\widehat{\beta},\beta)\:F'\to F''$ in $\laxslice{\K}{\B}$ we have that the chosen cartesian lifting of $\beta\ast (\widehat{\alpha}\c F\st \tau)=\left(\beta\ast (F')\st \tau\right)\ast \widehat{\tau\st \alpha}$ needs to coincide with $\tau\st \beta\ast \widehat{\tau\st \alpha}$.
	
	Given a $2$-cell $\delta\:(\widehat{\alpha},\alpha)\to (\widehat{\beta},\beta)\:F\to F'$ in $\laxslice{\K}{\B}$, we define $\tau\st \delta$ to be the chosen cartesian lifting of the $2$-cell $\delta\ast F\st\tau$ along the Grothendieck opfibration $(F')\st \tau$, where the latter has the cleavage induced by the cleavage of $\tau$. It is straightforward to show that the codomain of $\tau\st\delta$ is indeed $\widehat{\tau\st\beta}$ and that $\tau\st\delta$ is a $2$-cell in $\laxslice{\K}{\E}$ from $\tau\st \alpha$ to $\tau\st \beta$. It is then straightforward to check that $\tau\st$ is an $\F$-functor, using that a split Grothendieck opfibration lifts identity 2-cells to identities.
\end{proof}

\begin{rem}
	Notice that, in order to obtain a functor between lax slices, it is essential to consider pullbacks along split Grothendieck opfibrations rather than general pullbacks.
\end{rem}

\begin{teor}\label{teortausthasadjoint}
	Let $\tau\:\E\to \B$ be a split Grothendieck opfibration in $\CAT$. Then the $\F$-functor
	$$\tau\st\:\laxslice{\CAT}{\B}\to \laxslice{\CAT}{\E}$$
	has a strict right-semi-lax loose right $\F$-adjoint.
	
	As a consequence, by \thex\ref{teorsemilaxFadjpreserve}, $\tau\st$ preserves all the universal marked oplax $2$-cocylinders for an $\F$-diagram which have tight $\lambda$-components; see \corx\ref{explicitprestaust} and Section~\ref{sectionliftingpreservfordom} for what this means explicitly in practice.
\end{teor}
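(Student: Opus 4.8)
The plan is to build the right adjoint through the universal mapping property of \prox\ref{univmappingproplaxadj}, taking $F=\tau\st$ and laxifying the classical argument that a Conduché functor is exponentiable. A split Grothendieck opfibration in $\CAT$ is in particular Conduché, so in dimension $1$ the pullback $\tau\st\:\slice{\CAT}{\B}\to \slice{\CAT}{\E}$ between strict slices already admits a right adjoint $\tau\stb$ (Conduché's~\cite{conduche_existadjointsadroitcondfunc}). The task is to promote the counit of that $1$-dimensional adjunction to a counit that is universal in the lax sense required by \prox\ref{univmappingproplaxadj}, and moreover to arrange that $\lambda_h=\id{}$ for every $h$, so that the resulting lax adjunction is automatically right semi-lax. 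The consequence on preservation is then immediate from \thex\ref{teorsemilaxFadjpreserve}, since $\tau\st$ will be the left adjoint of a right semi-lax loose $\F$-adjunction.

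Concretely, for an object $p\:\D\to \E$ of $\laxslice{\CAT}{\E}$ I would take $U(p)\:\Pi_p\to \B$ to be the classical object $\tau\stb(p)$: its fibre over $b\in \B$ is the category of sections of $p$ over the fibre $\E_b\deq \tau^{-1}(b)$, as one sees by transposing $\tau\st(\id{}_b)=(\E_b\hookrightarrow \E)$ across the $1$-dimensional adjunction. The split opfibration structure of $\tau$ supplies, by transporting sections along the chosen opcartesian liftings, a strictly functorial reindexing in $b$; this is exactly where split-ness rather than mere cloven-ness is used. The counit $\eps_p\:\tau\st(U(p))=\Pi_p\x[\B]\E\to \D$ is the evaluation functor over $\E$, and crucially it is a \emph{strict} morphism of $\laxslice{\CAT}{\E}$, i.e.\ a triangle filled with the identity $2$-cell. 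This strictness is what will force $\eps$ to be strictly $2$-natural and the modification $s$ to vanish.

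Next I would verify the lax universal property. Given $X\in \laxslice{\CAT}{\B}$ and a morphism $h\:\tau\st X\to p$ of $\laxslice{\CAT}{\E}$ — a triangle filled with a $2$-cell — the transpose $\ov{h}\:X\to U(p)$ is produced as in the Conduché construction, sending an object to the section it classifies, with the filling $2$-cell of $h$ transported through the opcartesian liftings of $\tau$ just as in \prox\ref{proptaustextendstoatwofunctor}. Because these liftings are chosen strictly and $\tau$ is split, the comparison $\lambda_h$ comes out to be the identity, giving $\eps_p\c \tau\st(\ov{h})=h$ on the nose. The $2$-dimensional clause (uniqueness of $\delta$ for a given $g$ and $\sigma\:h\aR{}\eps_p\c \tau\st(g)$) is checked by transposing $\sigma$ fibrewise across $\tau\st\dashv \tau\stb$. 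It then remains to confirm the bookkeeping hypotheses of \prox\ref{univmappingproplaxadj}, namely $\ov{h}=\ov{h\c \eps_{\tau\st X}}\c \ov{\id{}}$, equation~\refs{assumptioncharactlaxadj}, and $\ov{\eps_E}=\id{}$ with $\lambda_{\eps_E}=\id{}$; all of these reduce to the strictness of the chosen liftings and hold by the same mechanism, which also yields $t=\id{}$ as in \thex\ref{teordomhassemilaxFadjoint}, making the adjunction strict.

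Finally, for the $\F$-categorical upgrade I would equip both lax slices with the canonical $\F$-structure of \remx\ref{rephrasecorrespondence} (loose part the lax slice, tight part the strict slice). That $\tau\st$ is an $\F$-functor is \prox\ref{proptaustextendstoatwofunctor}; that $U=\tau\stb$ is an $\F$-functor follows since the reindexing preserves triangles filled with identities. As $\eps$ is strict in every component it is strictly $2$-natural, and $s=\id{}$, which is the right semi-lax condition, while the loose strict/lax $\F$-naturality of $\eta$ and $\eps$ (structure $2$-cells on tight morphisms being identities) again follows from split-ness. I would emphasise that, unlike in \thex\ref{teordomhassemilaxFadjoint}, neither $\eta$ nor $\eps$ need have tight components here, so only a \emph{loose} $\F$-adjunction is obtained, which is precisely what the statement claims. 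The main obstacle is the explicit construction of $U$ on morphisms and $2$-cells of the lax slice together with the verification that the lax comparison $\lambda_h$ is genuinely the identity: this is the exact point at which Conduché's exponentiability proof must be laxified, and it is the split opfibration hypothesis — strictly chosen opcartesian liftings of both $1$- and $2$-cells, as set up in \prox\ref{proptaustextendstoatwofunctor} — that makes the strictness on the counit side work out.
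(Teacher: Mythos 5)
There is a genuine gap at the heart of the construction: you take $U(p)=\tau\stb(p)$ to be the \emph{classical} Conduch\'e object, whose fibre over $b\in\B$ is the category of strict sections of $p$ over $\tau^{-1}(b)$, and you assert that the counit $\eps_p$ is then a tight morphism of $\laxslice{\CAT}{\E}$ (a triangle filled with an identity). This cannot work. A morphism $h\:\tau\st X\to p$ in the lax slice carries a filling $2$-cell whose components $\gamma_{(A,E)}\:E\to p(\widehat{h}(A,E))$ are genuinely non-trivial data; if both the transpose $\ov{h}$ and the counit $\eps_p$ were tight, the composite $\eps_p\c\tau\st(\ov{h})$ would be tight as well, so it could never equal $h$ on the nose, and the requirement $\lambda_h=\id{}$ (hence right semi-laxness) would fail. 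The paper's resolution is exactly the point you skip over: the fibre of $\tau\stb H$ over $X$ is not the category of strict sections but the category of \emph{lax triangles} $(\widehat{\alpha},\alpha)$ from the inclusion $\tau^{-1}(X)\to\E$ to $H$ in $\laxslice{\CAT}{\E}$, i.e.\ functors $\widehat{\alpha}\:\tau^{-1}(X)\to\D$ together with a natural transformation $\alpha$. The filling $2$-cell of $h$ is then absorbed into the \emph{objects} of $\tau\stb H$ (via $\alpha_E\deq\gamma_{(A,E)}$), which is what keeps $\ov{h}$ tight, while the counit --- evaluation, with filling $2$-cell $\alpha_E$ at $((X,(\widehat{\alpha},\alpha)),E)$ --- is necessarily a \emph{loose} morphism. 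This is also precisely why the theorem only claims a loose $\F$-adjunction: in the paper $\eta$ does have tight components and it is $\eps$ alone that does not, contrary to your closing remark that neither need be tight.

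A second, related confusion: you infer strict $2$-naturality of $\eps$ from the (alleged) tightness of its components. These are independent conditions: ``right semi-lax'' asks that the structure $2$-cells of $\eps$ on morphisms of $\laxslice{\CAT}{\E}$ be identities, which in the paper follows from the evaluation formula and the splitness of $\tau$, not from the components being identity-filled triangles. The rest of your outline (use of \prox\ref{univmappingproplaxadj}, arranging $\lambda_h=\id{}$ and $t=\id{}$, the $\F$-categorical bookkeeping, and the appeal to \thex\ref{teorsemilaxFadjpreserve} for the preservation consequence) matches the paper's strategy, but it only goes through once the fibres of $\tau\stb H$ are laxified as above; with strict sections the lax universal property of the counit already fails at the level of objects.
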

\begin{proof}
	We use \prox\ref{univmappingproplaxadj} (universal mapping property that characterizes a lax adjunction) to build a right-semi-lax right adjoint $\tau\stb\:\laxslice{\CAT}{\E}\to \laxslice{\CAT}{\B}$ to $\tau\st$. We will generalize the ideas of the construction of a right adjoint to the pullback between strict slices (see Conduch\'{e}'s~\cite{conduche_existadjointsadroitcondfunc} and Palmgren's~\cite{palmgren_groupoidsloccartclos}), using that $\tau$ is Conduch\'{e}.
	
	Given a morphism $f\:X\to X'$ in $\B$, we consider the following pullbacks in $\CAT$\v[-1]
	\begin{eqD*}
		\sq*[p][5.3][6.5]{\tau^{-1}(X)}{\E}{\1}{\B}{U}{}{\tau}{X}
		\quad
		\sq*[p][5.3][6.5]{\tau^{-1}(f)}{\E}{\2}{\B}{V}{}{\tau}{f}
		\qquad
		\begin{cd}*[5.3][5]
			\tau^{-1}(X)\PB{rd} \arrow[d,"{}"]\arrow[r,"{\t{0}}"] \&[-2.5ex] \tau^{-1}(f)\PB{rd} \arrow[d,"{}"]\arrow[r,"{V}"]\& \E \arrow[d,"{\tau}"]\\
			\1 \arrow[r,"{0}"'] \& \2 \arrow[r,"{f}"'] \& \B
		\end{cd}
	\end{eqD*}
	Notice that $\tau^{-1}(X)$ is the fibre of $\tau$ over $X$. Whereas $\tau^{-1}(f)$ has three kinds of morphisms, namely the morphisms in $\E$ over $\id{X}$, those over $\id{X'}$ and those over $f\:X\to X'$.
	
	Given a functor $H\:\D\to \E$, we define $\tau\stb H$ as the projection on the first component $\pr{1}\:\H\to \B$, where the category $\H$ is defined as follows:
	\begin{description}
		\item[an object] is a pair $(X,(\widehat{\alpha},\alpha))$ with $X\in \B$ and $(\widehat{\alpha},\alpha)$ a morphism in $\laxslice{\CAT}{\E}$\v[-1]
		\tr[3.6][5.1][-2][0][l][-0.3][\alpha][0.85]{\tau^{-1}(X)}{\D}{\E}{\widehat{\alpha}}{U}{H}
		\item[a morphism $(X,(\widehat{\alpha},\alpha))\to (X',(\widehat{\beta},\beta))$] is a pair $(f,(\widehat{\Phi},\Phi))$ with $f\:X\to X'$ in $\B$ and $(\widehat{\Phi},\Phi)$ a morphism in $\laxslice{\CAT}{\E}$ as on the left below such that $\Phi\ast \t{0}=\alpha$ and $\Phi\ast \t{1}=\beta$
		\begin{eqD*}
			\tr*[3.6][5.1][-2][0][l][-0.3][\Phi][0.85]{\tau^{-1}(f)}{\D}{\E}{\widehat{\Phi}}{V}{H}
			\qquad\quad
			\begin{cd}*[2.4][3.6]
				\tau^{-1}(X)\arrow[rd,"{\t{0}}"{pos=0.4},shorten <=-0.5ex,shorten >=-0.5ex]\arrow[rrrd,bend left=25,"{\widehat{\alpha}}"]\arrow[rrdd,bend right=35,"{U}"']\&[-3ex] \\[-4ex]
				\& \tau^{-1}(f) \arrow[rr,"{\widehat{\Phi}}"]\arrow[rd,"{V}"',""{name=A}]\&[-2ex]\& \D \arrow[ld,"{H}",""'{name=B}]\\
				\& \& \E
				\arrow[Rightarrow,from=A,to=B,shift left = 0.3,"{\Phi}",shorten <=0.75em,shorten >=0.75em]
			\end{cd}
		\end{eqD*}
		So the only data of $(\widehat{\Phi},\Phi)$ that are not already determined by its domain and its codomain are the assignments of $\widehat{\Phi}$ on the morphisms of $\tau^{-1}(f)$ corresponding to morphisms $g\:E\to E'$ in $\E$ over $f\:X\to X'$, and such assignments produce a morphism in $\H$ precisely when they organize into a functor $\widehat{\Phi}$ that makes the following square commute for every $g\:E\to E'$ in $\E$ over $f\:X\to X'$
		\sq[n][5.5][6.5]{E}{H(\widehat{\alpha}(E))}{E'}{H(\widehat{\beta}(E'))}{\alpha_E}{g}{H(\widehat{\Phi}(0\aar{}1,g))}{\beta_{E'}}
		\item[the identity on $(X,(\widehat{\alpha},\alpha))$] is the pair $(\id{X},(\widehat{\id{\alpha}},\id{\alpha}))$ determined by
		$$\widehat{\id{\alpha}}(0\aar{}1,g)=\widehat{\alpha}(g);$$
		\item[the composition of $(f,(\widehat{\Phi},\Phi))$ and $(f',(\widehat{\Phi'},\Phi'))$] has first component $f'\c f$ and second component determined by sending $g\:E\to E'$ over $X\ar{f}X'\ar{f'}X''$ to
		$$\widehat{\Phi'}(1\ar{}2,g_2)\c\widehat{\Phi}(0\ar{}1,g_1)$$
		where $E\ar{g_1}Z\ar{g_2}E'$ is a factorization of $g$ over $X\ar{f}X'\ar{f'}X''$ obtained by the fact that $\tau$ is a Conduch\'{e} functor. Notice that such assignment is independent from the choice of the factorization because $\widehat{\Phi}$ and $\widehat{\Phi'}$ need to agree on any morphism $(1\aeqq{} 1,h)$.
	\end{description}
	It is straightforward to check that $\H$ is a category, and $\tau\stb H$ is then surely a functor.
	
	We define the counit $\eps$ on $H$ as the morphism in $\laxslice{\CAT}{\E}$
	\tr[3.6][5.1][0][0][l][-0.3][\eps_H][0.85]{\catfont{N}}{\D}{\E}{\widehat{\eps_H}}{\tau\st\tau\stb H}{H}
	given by the evaluation, as follows. An object of $\catfont{N}$ is a pair $((X,(\widehat{\alpha},\alpha)),E)$ with $(X,(\widehat{\alpha},\alpha))\in \H$ and $E\in \tau^{-1}(X)$, whereas a morphism in $\catfont{N}$ is a pair $((f,(\widehat{\Phi},\Phi)),g)$ with $(f,(\widehat{\Phi},\Phi))$ a morphism in $\H$ and $g\:E\to E'$ in $\E$ over $f$. We define
	$$\widehat{\eps_H}((X,(\widehat{\alpha},\alpha)),E)\deq \widehat{\alpha}(E)$$
	$$\widehat{\eps_H}((f,(\widehat{\Phi},\Phi)),g)\deq \widehat{\Phi}(0\ar{} 1,g)$$
	$${\left(\eps_H\right)}_{((X,(\widehat{\alpha},\alpha)),E)}\deq \alpha_{E}$$
	It is then easy to see that $\widehat{\eps_H}$ is a functor and $\eps_H$ is a natural transformation. Notice, however, that $\eps_H$ is not tight, so that we can only hope to obtain a loose adjunction.
	
	We prove that $\eps_H$ is universal in the lax sense. So take a functor $F\:\A\to \B$ and a morphism $(\widehat{\gamma},\gamma)\:\tau\st F\to H$ in $\laxslice{\CAT}{\E}$. Wishing to obtain a right-semi-lax loose $\F$-adjunction, we search for a tight morphism in $\laxslice{\CAT}{\B}$ as on the left below that satisfies the equality of diagrams on the right
	\begin{eqD}{eqovgamma}
		\tr*[3.6][5.1][0][0][e][-0.3][\ov{\gamma}][1.05]{\A}{\H}{\B}{\widehat{\ov{\gamma}}}{F}{\tau\stb H}
		\qquad \quad\h[4]
		\begin{cd}*[6][6]
			\P \arrow[r,"{\widehat{\tau\st \ov{\gamma}}}"]\arrow[rd,bend right=20,"{\tau\st F}"',""{name=A}]\& |[alias=J]|\catfont{N}\arrow[d,"{\tau\st\tau\stb H}"{description},""{name=B}] \arrow[r,"{\widehat{\eps_H}}"]\& |[alias=K]|\D \arrow[ld,bend left=20,"{H}",""']\\
			\& \E
			\arrow[equal,from=A,to=J,shorten <=1.2ex,shorten >=1.95ex]
			\arrow[Rightarrow,from=B,to=K,shift right=0.6ex,"{\eps_H}"'{pos=0.4},shorten <=2.6ex,shorten >=2.5ex]
		\end{cd}
		\h[-3]=	\tr*[3.6][5.1][0][0][l][-0.3][\gamma][0.85]{\P}{\D}{\E}{\widehat{\gamma}}{\tau\st F}{H}
	\end{eqD}
	so that we can take $\lambda_{(\widehat{\gamma},\gamma)}=\id{}$. Given $a\:A\to A'$ in $\A$, we have $\widehat{\ov{\gamma}}(A)=(F(A),(\widehat{\alpha},\alpha))$ and $\widehat{\ov{\gamma}}(a)=(F(a),(\widehat{\Phi},\Phi))$ with
	\begin{eqD*}
		\tr*[3.6][5.1][-3][0][l][-0.3][\alpha][0.85]{\tau^{-1}(F(A))}{\D}{\E}{\widehat{\alpha}}{U}{H}\qquad \quad
		\tr*[3.6][5.1][-3][0][l][-0.3][\Phi][0.85]{\tau^{-1}(F(a))}{\D}{\E}{\widehat{\Phi}}{V}{H}
	\end{eqD*}
	And given $g\:E\to E'$ in $\E$ over $F(a)\:F(A)\ar{}F(A')$, then $(a,g)\:(A,E)\to (A',E')$ is a morphism in $\P$. So we want to define
	$$\widehat{\alpha}(E)=\widehat{\eps_H}(\widehat{\ov{\gamma}}(A),E)=\widehat{\eps_H}(\widehat{\tau\st \ov{\gamma}}\h(A,E))=\widehat{\gamma}(A,E)$$
	$$\widehat{\Phi}(0\ar{}1,g)=\widehat{\eps_H}(\widehat{\ov{\gamma}}(a),g)=\widehat{\eps_H}(\widehat{\tau\st \ov{\gamma}}\h(a,g))=\widehat{\gamma}(a,g)$$
	$${\alpha}_E={(\eps_H)}_{(\widehat{\ov{\gamma}}(A),E)}={(\eps_H)}_{(\widehat{\tau\st \ov{\gamma}}\h(A,E))}={\gamma}_{(A,E)}\v$$
	Taking a morphism $g'\:E\to E'$ in $\tau^{-1}(F(A))$,
	$$\widehat{\alpha}(g')=\widehat{\id{\alpha}}(0\ar{}1,g')=\widehat{\eps_H}(\widehat{\ov{\gamma}}(\id{A}),g')=\widehat{\eps_H}(\widehat{\tau\st \ov{\gamma}}\h(\id{A},g'))=\widehat{\gamma}(\id{A},g')$$
	It is straightforward to check that this $\widehat{\ov{\gamma}}$ works.
	
	Given another morphism $(\widehat{\xi},\xi)\:F\to \tau\stb H$ in $\laxslice{\CAT}{\B}$ and a $2$-cell $\Xi\:(\widehat{\gamma},\gamma)\aR{} (\widehat{\eps_H},\eps_H)\c (\widehat{\tau\st\xi},\tau\st\xi)$ in $\laxslice{\CAT}{\E}$, we prove that there is a unique $2$-cell $\delta\:(\widehat{\ov{\gamma}},\id{})\aR{} (\widehat{\xi},\xi)$ in $\laxslice{\CAT}{\B}$ such that
	\begin{equation}\label{eqrequestXi}
		(\widehat{\eps_H},\eps_H)\ast \tau\st\delta\c \id{}=\Xi.
	\end{equation}
	In order for $\delta$ to be a $2$-cell $(\widehat{\ov{\gamma}},\id{})\aR{} (\widehat{\xi},\xi)$, it must hold that $\tau\stb H\ast \delta=\xi$. Whereas equation~\refs{eqrequestXi} translates as $\widehat{\eps_H}\ast \tau\st \delta=\Xi$. So, for every $A\in \A$, the component $\delta_A\:\widehat{\ov{\gamma}}(A)\to \widehat{\xi}(A)$ needs to be the morphism $(\xi_A,(\widehat{\delta_A},\delta_A))$ in $\H$ given as follows. For every $g\:E\to E'$ over $\xi_A$, factorizing $g$ as the cartesian morphism $\cart{\xi_A}{E}$ in the cleavage of $\tau$ over $\xi_A$ to $E$ followed by the unique induced vertical morphism $g_{\opn{vert}}$,
	\begin{center}
		\linesep{1.9}
		\begin{tabular}{LL}
			\widehat{\delta_A}(0\ar{}1,g)=\widehat{\delta_A}(1\aeqq{}1,g_{\opn{vert}})\c\widehat{\delta_A}(0\ar{}1,\cart{\xi_A}{E})= \\
			=\widehat{\widehat{\xi}(A)}(g_{\opn{vert}})\c \widehat{\eps_H}(\delta_A,\cart{\xi_A}{E})=\widehat{\widehat{\xi}(A)}(g_{\opn{vert}})\c \widehat{\eps_H}\left((\tau\st\delta)_{A,E}\right)= \\
			= \widehat{\widehat{\xi}(A)}(g_{\opn{vert}})\c \Xi_{A,E}
		\end{tabular}
	\end{center}	
	It is straightforward to prove that this $\delta$ works.
	
	Considering $(\widehat{\gamma},\gamma)=(\widehat{\eps_H},\eps_H)$, we immediately see that $\widehat{\ov{\eps_H}}=\id{}$, because $(\widehat{\eps_H},\eps_H)$ is the evaluation. 
	
	Moreover, for every functor $F\:\A\to \B$ and morphism $(\widehat{\gamma},\gamma)\:\tau\st F\to H$ in $\laxslice{\CAT}{\E}$ we prove that 
	\begin{equation}\label{eqassumptionunivmappingprop}
		{\ov{\left((\widehat{\gamma},\gamma)\c (\widehat{\eps_H},\eps_H)\right)}}\c {\ov{\id{\tau\st F}}}={\ov{(\widehat{\gamma},\gamma)}}.
	\end{equation}
	${\ov{\id{\tau\st F}}}=(\widehat{\eta_F},\id{})$, that will be the unit $\eta_F$, is such that, for every $a\:A\to A'$ in $\A$, morphism $g'$ in $\tau^{-1}(F(A))$ and $g\:E\to E'$ in $\E$ over $F(a)\:F(A)\to F(A')$,
	$$\widehat{\widehat{\eta_F}(A)}(E)=(A,E) \qquad \quad \widehat{\widehat{\eta_F}(A)}(g')=(\id{A},g')$$
	$$\widehat{\widehat{\eta_F}(a)}(0\ar{}1, g)=(a,g)\qquad \quad {\widehat{\eta_F}(A)}_{E}= \id{}$$
	Whereas for a general $(\widehat{\psi},\psi)\:G\to H$ in $\laxslice{\CAT}{\E}$, the morphism
	$${\ov{\left((\widehat{\psi},\psi)\c (\widehat{\eps_H},\eps_H)\right)}}=(\widehat{\tau\stb{\psi}},\id{})\v[-1]$$
	will be the action of $\tau\stb$ on the morphism $(\widehat{\psi},\psi)$, and is such that $\widehat{\tau\stb{\psi}}$ acts by postcomposing the triangles with $(\widehat{\psi},\psi)$. Thus equation~\refs{eqassumptionunivmappingprop} holds.
	
	By \prox\ref{univmappingproplaxadj}, as $\lambda$ is always the identity, $\tau\stb$ extends to an oplax functor, that can be easily checked to be a $2$-functor (it acts by postcomposition), $\eps$ extends to a $2$-natural transformation, $\eta$ extends to a lax natural transformation and there exists a modification $t$ such that $\tau\stb$ is a right-semi-lax right adjoint to $\tau\st$. It is easy to check that $t$ is the identity.
	
	Since $\ov{(\widehat{\gamma},\gamma)}$ is always tight, then $\tau\stb$ is an $\F$-functor and $\eta$ has tight components. It remains to show that $\eta$ is loose strict/lax $\F$-natural. Given a morphism $(\widehat{\sigma},\sigma)\:{(\A\ar{F}\B)}\to(\A'\ar{F'}\B)$ in $\laxslice{\CAT}{\B}$, the component on $A\in \A$ of the structure $2$-cell $\eta_{(\widehat{\sigma},\sigma)}$ is the morphism in the domain of $\tau\stb\tau\st F'$ with first component $\sigma_A\:F(A)\to F'(\widehat{\sigma}(A))$ in $\B$ and second component given by
	$$\widehat{\eta_{(\widehat{\sigma},\sigma),A}}(0\ar{}1,g)=\widehat{\widehat{\eta_{F'}}\left(\widehat{\sigma}(A)\right)}(g_{\opn{vert}})=\left(\id{\h\widehat{\sigma}(A)},g_{\opn{vert}}\right).$$
	When $(\widehat{\sigma},\sigma)$ is tight, i.e.\ when $\sigma=\id{}$, we have that $g=g_{\opn{vert}}$ because $\tau$ has a normal cleavage, and we find $\eta_{(\widehat{\sigma},\sigma)}=\id{}$. Thus $\eta$ is strict/lax $\F$-natural. We conclude that $\tau\stb$ is a strict right-semi-lax loose right $\F$-adjoint to $\tau\st$. 
\end{proof}

\begin{rem}
	We have actually proved in \thex\ref{teortausthasadjoint} that $\tau\stb$ sends every morphism in $\laxslice{\CAT}{\E}$ to a tight one in $\laxslice{\CAT}{\B}$. So $\tau\stb\:\laxslice{\CAT}{\E}\to \laxslice{\CAT}{\B}$ is still an $\F$-functor if we take the trivial $\F$-category structure on the domain, i.e.\ taking everything to be tight, and the canonical one in the codomain. Of course, with such a choice of $\F$-category structures, $\tau\st$ remains an $\F$-functor and $\eta$ remains with tight components. But $\eps$ becomes tight, having now tight components trivially. 
	
	So we find a strict right-semi-lax (tight) $\F$-adjunction between $\tau\st$ and $\tau\stb$. But \thex\ref{teorsemilaxFadjpreserve} does not add anything to the preservation of $2$-colimits we have already proved in \thex\ref{teortausthasadjoint}, since it would consider strict/oplax $\F$-colimits in an $\F$-category with trivial $\F$-category structure.
\end{rem}

As for \corx\ref{remexplicitpresdom}, by \recx\ref{rectwocatofel}, cartesian-marked oplax conical 2-colimits (and thus weighted 2-colimits alike) are inscribed in the $\F$-categorical context of universal marked oplax 2-cocylinders; see the following results.

\begin{coroll}\label{explicitprestaust}
	The 2-functor $\tau\st\:\laxslice{\CAT}{\B}\to \laxslice{\CAT}{\E}$ preserves all the universal cartesian-marked oplax $2$-cocones for an $\F$-diagram which have tight components.

	That is, given 2-functors $W\:\A\op\to \CAT$ with $\A$ small and $H\:\Groth{W}\to \laxslice{\Cat}{\B}$ that sends every morphism of type $(f,\id{})$ to a triangle filled with an identity, if
	$$\zeta\:\Delta 1\aoplaxn{}\HomC{\laxslice{\Cat}{\B}}{ H(-)}{q}$$
	is a universal cartesian-marked oplax $2$-cocone for $H$ on $q\in \laxslice{\Cat}{\B}$ exhibiting $q=\oplaxncolim{H}$ such that $\zeta_{(A,X)}$ is a triangle filled with an identity for every $(A,X)\in \Groth{W}$, then $\tau\st\c \h\zeta$ is universal as well, exhibiting $$\tau\st(q)=\oplaxncolim{\left(\tau\st\c H\right)}.$$ 
\end{coroll}

\begin{exampl}\label{exacolimthatcomefromthetwocattaust}
	As we said in \remx\ref{exacolimitsthatcomefromthetwocat}, all the 2-colimits in $\laxslice{\Cat}{\B}$ that come from 2-colimits in $\Cat$ via the work of Section~\ref{sectioncolimitsin2slices} satisfy the conditions of \corx\ref{explicitprestaust}. $\tau\st\:\laxslice{\CAT}{\B}\to \laxslice{\CAT}{\E}$ thus preserves all such 2-colimits. Moreover, $\tau\st$ sends such 2-colimits to 2-colimits in $\laxslice{\CAT}{\E}$ that satisfy the conditions of \prox\ref{propsharperpreservationdom}, since $\tau\st$ is an $\F$-functor. And thus the domain 2-functor preserves the obtained 2-colimits in $\laxslice{\CAT}{\E}$.

	This means that the conditions of \corx\ref{explicitprestaust} and of \prox\ref{propsharperpreservationdom} are satisfied by most of the colimits we have in practice. For example, we have been able to apply these results to all the colimits in 2-dimensional slices that we needed in order to develop our~\cite{mesiti_twoclassifiersdensegenstacks}. As we said in \remx\ref{exacolimitsthatcomefromthetwocat}, there might be worse-behaved 2-colimits in the lax slice, but the colimits in 2-dimensional slices that we commonly have in practice are the ones that come from the base 2-category, and those ones satisfy the conditions of \corx\ref{explicitprestaust}.
\end{exampl}

	We now extend the result of preservation of $2$-colimits that we have proved for
	$$\tau\st\:\laxslice{\K}{\B}\to \laxslice{\K}{\E}$$
	when $\K=\CAT$ (\corx\ref{explicitprestaust}) to $\K=\m{\L\op}{\CAT}$ a $2$-category of $2$-dimensional presheaves. See our joint work with Caviglia~\cite{cavigliamesiti_indexedgrothconstr} for a characterization of Grothendieck opfibrations in this context. We should always keep \exax\ref{exacolimthatcomefromthetwocattaust}, \thex\ref{teorcolimitsinlaxslicesthatcomefromthebase} and \exax\ref{exaspecificcolimitsinlaxslice} in mind.

\begin{prop}\label{proptaustprestacks}
	Let $\L$ be a small $2$-category and let $\tau\:\E\to \B$ be a split Grothendieck opfibration in $\m{\L\op}{\CAT}$. Then the $\F$-functor 
	$$\tau\st\:\laxslice{\m{\L\op}{\CAT}}{\B}\to \laxslice{\m{\L\op}{\CAT}}{\E}$$
	\pteor{which is such by \prox\ref{proptaustextendstoatwofunctor}} preserves all the universal cartesian-marked oplax $2$-cocones for an $\F$-diagram which have tight components.
\end{prop}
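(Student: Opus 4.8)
The plan is to reduce the statement pointwise to the already established case $\K=\CAT$ (\thex\ref{teortausthasadjoint}), exploiting that $2$-colimits in $\m{\L\op}{\CAT}$ are computed objectwise. For each $\ell\in \L$ the evaluation $2$-functor $\ev_\ell\:\m{\L\op}{\CAT}\to \CAT$ preserves all finite limits, and the family $\fami{\ev_\ell}{\ell\in \L}$ jointly creates all weighted $2$-colimits, hence in particular the oplax normal conical ones. Since pullbacks in $\m{\L\op}{\CAT}$ are pointwise, the chosen pullbacks defining $\tau\st$ (see \prox\ref{proptaustextendstoatwofunctor}) can be taken objectwise, and the left adjoint to $\eta_\tau$ defining the splitting is preserved by $\ev_\ell$; thus $\tau_\ell\deq \ev_\ell(\tau)\:\E(\ell)\to \B(\ell)$ is again a split Grothendieck opfibration in $\CAT$, and, writing $\ev_\ell^{\B}$ and $\ev_\ell^{\E}$ for the $2$-functors induced by $\ev_\ell$ on the lax slices, we obtain the compatibility
\[
	\ev_\ell^{\E}\c \tau\st=(\tau_\ell)\st\c \ev_\ell^{\B}.
\]

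The heart of the argument is a lemma transferring this objectwise behaviour to the lax slices: each $\ev_\ell^{M}$ preserves, and the family $\fami{\ev_\ell^{M}}{\ell\in \L}$ jointly reflects, the universality of \emph{cartesian} oplax normal $2$-cocones for $\F$-diagrams. I would prove both through the commuting square $\dom\c \ev_\ell^{M}=\ev_\ell\c \dom$. A cocone with tight components is cartesian (its triangles are filled with identities, hence with isomorphisms, by \remx\ref{rephrasecorrespondence} and \exax\ref{excartinlaxslice}), so over any base object the domain $2$-functor both preserves (\prox\ref{propsharperpreservationdom}) and, being a $2$-colim-fibration that reflects cartesian $2$-colimits (\thex\ref{teordomis2colimfib} and \prox\ref{propcharacterization2colimfib}), reflects the universality of such cocones. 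Combining this with the objectwise creation of $2$-colimits by $\fami{\ev_\ell}{\ell\in \L}$ and chasing the square yields the lemma, noting also that $\ev_\ell^{M}$ is an $\F$-functor (it sends identity-filled triangles to identity-filled ones) and hence carries tight components to tight ones.

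With the lemma in hand the conclusion is immediate. Let $\zeta$ be a universal oplax normal $2$-cocone for an $\F$-diagram $H$ in $\laxslice{\m{\L\op}{\CAT}}{\B}$ with tight components. Since $\tau\st$ is an $\F$-functor, $\tau\st\c \zeta$ is cartesian, so by the joint reflection part of the lemma, applied in $\laxslice{\m{\L\op}{\CAT}}{\E}$, it suffices to prove that $\ev_\ell^{\E}(\tau\st\c \zeta)$ is universal for every $\ell$. By the compatibility above this equals $(\tau_\ell)\st(\ev_\ell^{\B}\c \zeta)$, and the preservation part of the lemma makes $\ev_\ell^{\B}\c \zeta$ a universal oplax normal $2$-cocone for the $\F$-diagram $\ev_\ell^{\B}\c H$ with tight components. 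As $\tau_\ell$ is a split Grothendieck opfibration in $\CAT$, \thex\ref{teortausthasadjoint} ensures that $(\tau_\ell)\st$ preserves it, so $(\tau_\ell)\st(\ev_\ell^{\B}\c \zeta)$ is universal for every $\ell$, and therefore so is $\tau\st\c \zeta$.

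The main obstacle is the lemma itself. Universality in $\laxslice{\m{\L\op}{\CAT}}{M}$ is not literally objectwise: the connecting $1$-cells are $2$-natural transformations and their fillers are modifications, so the relevant hom-categories are ends rather than products of the pointwise hom-categories. It is exactly the cartesianity of the cocone components that lets the domain $2$-functor bridge this gap, since on cartesian cocones $\dom$ is simultaneously a preserver and a reflector of universality; for a general oplax normal cocone the reflection fails and the objectwise reduction breaks down. A secondary point requiring care is the verification of the compatibility $\ev_\ell^{\E}\c \tau\st=(\tau_\ell)\st\c \ev_\ell^{\B}$ at the level of $1$- and $2$-cells, which rests on the cartesian liftings in the cleavage of $\tau$ being computed objectwise.
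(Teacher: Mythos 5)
Your proposal is correct and follows essentially the same route as the paper: both arguments hinge on the commutative square $\dom\c\ev_\ell^{M}=\ev_\ell\c\dom$, reflect universality of the cartesian cocone $\tau\st\c\zeta$ first through $\dom$ (Theorem \ref{teordomis2colimfib}) and then through the jointly reflective evaluations, and establish the required pointwise universality by pushing $\zeta$ forward with $\dom$ (preservation for tight-component cocones) and the component functors before applying the $\CAT$-case Theorem \ref{teortausthasadjoint}. The only difference is cosmetic: you package the preservation/joint-reflection of cartesian universality into a lemma about the induced functors $\ev_\ell^{M}$ on lax slices, whereas the paper chains the same reflections and preservations directly through its commutative diagram of $2$-functors.
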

\begin{proof}
	Consider a marking $W\:\A\op\to \CAT$ with $\A$ a small $2$-category and an $\F$-diagram $H\:\Groth{W}\to\laxslice{\m{\L\op}{\CAT}}{\B}$ (that is, a $2$-functor that sends every morphism of type $(f,\id{})$ to a triangle filled with an identity). Let then 
	$$\zeta\:\Delta 1\aoplaxn{}\HomC{\laxslice{\m{\L\op}{\CAT}}{\B}}{H(-)}{C}$$
	be a universal cartesian-marked oplax $2$-cocone that exhibits $C=\oplaxncolim{H}$ such that $\zeta_{(A,X)}$ is tight for every $(A,X)\in \Groth{W}$ (which means that it is a triangle filled with an identity). We want to prove that $\tau\st\c \zeta$ is universal as well, exhibiting $\tau\st(C)=\oplaxncolim{\left(\tau\st\c H\right)}$.
	
	Since the $\zeta_{(A,X)}$'s are all cartesian, as they are tight, and $\tau\st$ is an $\F$-functor, by \thex\ref{teordomis2colimfib} (the domain $2$-functor from a lax slice is a $2$-colim-fibration), we know that $\dom\:\laxslice{\m{\L\op}{\CAT}}{\E}\to \m{\L\op}{\CAT}$ reflects the universality of $\tau\st\c\h \zeta$. But the $2$-functors $(-)(L)\:\m{\L\op}{\CAT}\to \CAT$ of evaluation on $L\in \L$ jointly reflect $2$-colimits (as $2$-colimits in $2$-presheaves are calculated pointwise). Therefore, in order to prove that $\tau\st\c\h \zeta$ is universal, it suffices to show that, for every $L\in \L$, the cartesian-marked oplax $2$-cocone $(-)(L)\c \dom\c\h \tau\st \c \zeta$ is universal. Notice now that the diagram of $2$-functors
	\begin{cd}[5][5]
		\laxslice{\m{\L\op}{\CAT}}{\B}\arrow[r,"{\tau\st}"]\arrow[d,"{(-)_L}"]\& \laxslice{\m{\L\op}{\CAT}}{\E} \arrow[r,"{\dom}"]\arrow[d,"{(-)_L}"]\& \m{\L\op}{\CAT} \arrow[d,"{(-)(L)}"] \\
		\laxslice{\CAT}{\B(L)}\arrow[r,"{{(\tau_L)}\st}"']\& \laxslice{\CAT}{\E(L)}\arrow[r,"{\dom}"']\& \CAT
	\end{cd}
	is commutative, where $(-)_L$ is the $\F$-functor that takes components on $L$, because pullbacks in $\m{\L\op}{\CAT}$ are calculated pointwise and the components of the liftings along $\tau$ are the liftings of the components of $\tau$. Indeed every component $\tau_L$ of $\tau$ is a split Grothendieck opfibration in $\CAT$ because $\tau\c -\:\HomC{\m{\L\op}{\CAT}}{\y{L}}{\E}\to \HomC{\m{\L\op}{\CAT}}{\y{L}}{\B}$ is so, taking on the former the cleavage induced by the latter. And since a cleavage for $\tau$ is the choice of a left adjoint to $\eta_\tau\:\E\to \slice{\tau}{\B}$ (where the latter is the morphism induced by the identity $2$-cell on $\tau$), the cleavages determined on the Grothendieck opfibrations $(\tau\c -)\:\HomC{\m{\L\op}{\CAT}}{\X}{\E}\to \HomC{\m{\L\op}{\CAT}}{\X}{\B}$ in $\CAT$ (by applying the universal property of $\slice{\tau}{\B}$) are compatible.
	
	We prove that $\dom\c \h{(\tau_L)}\st\c (-)_L\c \zeta$ is universal. We have that $(-)_L\c \zeta$ is universal because it suffices to check that $\dom\c (-)_L\c \zeta=(-)(L)\c \dom\c \h\zeta$ is so, by \thex\ref{teordomis2colimfib}, as $\zeta$ has tight components and $(-)_L$ is an $\F$-functor. And $\dom$ preserves the universality of $\zeta$ by \thex\ref{teordomhassemilaxFadjoint} (thanks to the hypotheses), while $(-)(L)$ preserves every $2$-colimit. Then $\dom\c \h{(\tau_L)}\st\c (-)_L\c \zeta$ is universal applying \thex\ref{teortausthasadjoint} and \thex\ref{teordomhassemilaxFadjoint}, thanks to the hypothesis and to the fact that both $(-)_L$ and ${(\tau_L)}\st$ are $\F$-functors.
\end{proof}

	We conclude extending again the result of preservation of $2$-colimits for $\tau\st\:\laxslice{\K}{\B}\to \laxslice{\K}{\E}$, to $\K$ any finitely complete $2$-category with a dense generator. For this, we will need to restrict to relatively absolute $2$-colimits. But, as we recall in \defx\ref{defdense}, any object of $\K$ can be expressed as a relatively absolute $2$-colimit of dense generators, so that our assumption is not much restrictive in practice. Again, we should keep \exax\ref{exacolimthatcomefromthetwocattaust}, \thex\ref{teorcolimitsinlaxslicesthatcomefromthebase} and \exax\ref{exaspecificcolimitsinlaxslice} in mind.

	\begin{defne}[Kelly's~{\cite[Chapter~5]{kelly_basicconceptsofenriched}}]\label{defdense}
	A (2-)fully faithful 2-functor $J\:\L\to \K$ with $\L$ small is \dfn{a dense generator} (or also just \dfn{dense}) if the restricted Yoneda embedding
	\begin{fun}
		\t{J} & \: & \K & \too & \m{\L\op}{\CAT} \\[0.7ex]
		&& F &\mto& \HomC{\K}{J(-)}{F}
	\end{fun}
	is (2-)fully faithful.

	Equivalently, if every $F\in \K$ is a \dfn{$J$-absolute}, i.e.\ preserved by $\t{J}$, weighted 2-colimit in $\K$ of a diagram which factors through $\L$.
\end{defne}

\begin{exampl}\label{exarepresinprestacks}
	Let $\L$ be a small 2-category. The Yoneda embedding $\yy\:\L\to \m{\L\op}{\Cat}$ is dense; $\yy$-absoluteness is automatic as $\t{\yy}$ is essentially the identity. That is, representables form a dense generator of the 2-category of 2-presheaves.	In particular, the singleton category $\1$ is a dense generator of $\Cat$. \thex\ref{teortaustdensegenerator} will thus be a generalization of \prox\ref{proptaustprestacks} and \corx\ref{explicitprestaust}.
\end{exampl}

\begin{teor}\label{teortaustdensegenerator}
	Let $\K$ be a finitely complete $2$-category and let $J\:\L\to \K$ be a fully faithful dense $2$-functor. Consider $\tau\:\E\to \B$ a split Grothendieck opfibration in $\K$. Then the $\F$-functor 
	$$\tau\st\:\laxslice{\K}{\B}\to \laxslice{\K}{\E}$$
	\pteor{which is such by \prox\ref{proptaustextendstoatwofunctor}}
	 preserves all the universal cartesian-marked oplax $2$-cocones for an $\F$-diagram which have tight components and whose domain is $J$-absolute.
\end{teor}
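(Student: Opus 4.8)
The plan is to follow the proof of \prox\ref{proptaustprestacks} almost verbatim, with the jointly reflective family of evaluations $(-)(L)$ replaced by the single fully faithful nerve of the dense generator, and with the fact that evaluations preserve all $2$-colimits replaced by the $J$-absoluteness hypothesis. Write $\t{J}\deq \HomC{\K}{J(-)}{-}\:\K\to \m{\L\op}{\CAT}$ for the restricted-Yoneda nerve of $J$. Density of $J$ makes $\t{J}$ fully faithful, so $\t{J}$ reflects the universality of oplax normal $2$-cocones. Being representable-valued, $\t{J}$ preserves all finite limits, in particular the pullbacks defining $\tau\st$; and since $\tau$ is a split Grothendieck opfibration in $\K$, each $\HomC{\K}{J(L)}{\tau}$ is a split Grothendieck opfibration in $\CAT$ by the representable characterisation of opfibrations in a $2$-category (Street's~\cite{street_fibandyonlemma} and Weber's~\cite{weber_yonfromtwotop}), so that $\t{J}(\tau)$ is a split Grothendieck opfibration in $\m{\L\op}{\CAT}$.

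First I would fix a marking $W\:\A\op\to \CAT$ with $\A$ small, an $\F$-diagram $H\:\Groth{W}\to \laxslice{\K}{\B}$, and a universal oplax normal $2$-cocone $\zeta$ with tight components $\zeta_{(A,X)}$ exhibiting $C=\oplaxncolim{H}$, whose domain $\Groth{W}$ is $J$-absolute, meaning that $\t{J}$ preserves every oplax normal conical $2$-colimit of that shape. As in \prox\ref{proptaustprestacks}, the components of $\tau\st\c\h\zeta$ are tight, since $\tau\st$ is an $\F$-functor (\prox\ref{proptaustextendstoatwofunctor}), hence cartesian by \exax\ref{excartinlaxslice}; so by \thex\ref{teordomis2colimfib} the domain $2$-functor $\dom\:\laxslice{\K}{\E}\to\K$ reflects the universality of $\tau\st\c\h\zeta$, and it suffices to prove that $\dom\c\h\tau\st\c\h\zeta$ is universal in $\K$. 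Since $\t{J}$ is fully faithful it reflects universality, so it is in turn enough to prove that $\t{J}\c\h\dom\c\h\tau\st\c\h\zeta$ is universal in $\m{\L\op}{\CAT}$.

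Next I would transport the computation across the nerve. Since $\t{J}$ preserves pullbacks and, being fully faithful, preserves tightness of triangles, it induces an $\F$-functor $\t{J}_{\ast}$ on lax slices with $\dom\c\h\t{J}_{\ast}=\t{J}\c\h\dom$ and fitting into a commutative square $\t{J}\c\h\dom\c\h\tau\st=\dom\c\h{\t{J}(\tau)}\st\c\h\t{J}_{\ast}$ (the pullbacks and the chosen cleavages correspond because $\t{J}$ is limit-preserving and sends cartesian liftings to cartesian liftings). The image $\t{J}_{\ast}\c\h\zeta$ is an oplax normal $2$-cocone for the $\F$-diagram $\t{J}_{\ast}\c H$ with tight components, and it is universal: indeed $\dom\c\h\t{J}_{\ast}\c\h\zeta=\t{J}\c\h\dom\c\h\zeta$ is universal by $J$-absoluteness (with $\dom\c\h\zeta$ universal by \thex\ref{teordomhassemilaxFadjoint}), whence \thex\ref{teordomis2colimfib} lifts this back up the lax slice since the components are tight. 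Applying \prox\ref{proptaustprestacks} to the split Grothendieck opfibration $\t{J}(\tau)$ in $\m{\L\op}{\CAT}$, the cocone ${\t{J}(\tau)}\st\c\h\t{J}_{\ast}\c\h\zeta$ is universal; composing with $\dom$, which preserves it by \thex\ref{teordomhassemilaxFadjoint}, and using the commutative square, I would conclude that $\t{J}\c\h\dom\c\h\tau\st\c\h\zeta$ is universal. Undoing the two reflection steps (full faithfulness of $\t{J}$, then \thex\ref{teordomis2colimfib}) then gives the universality of $\tau\st\c\h\zeta$.

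The main obstacle is the compatibility bookkeeping underlying that commutative square: one must check that $\t{J}$ genuinely sends the split opfibration $\tau$ to a split opfibration and that the cleavages match strictly, so that $\t{J}\c\h\dom\c\h\tau\st=\dom\c\h{\t{J}(\tau)}\st\c\h\t{J}_{\ast}$ holds on the nose rather than merely up to isomorphism. This is exactly the analogue of the compatibility-of-cleavages argument in \prox\ref{proptaustprestacks}, now performed for the representables $\HomC{\K}{J(L)}{-}$ in place of the evaluations $(-)(L)$, and it is where limit-preservation of $\t{J}$ together with the representable description of opfibrations in $\K$ does the work. The only genuinely new ingredient beyond \prox\ref{proptaustprestacks} is that $J$-absoluteness enters precisely once, to ensure that $\t{J}$ preserves the universality of the underlying colimit $\dom\c\h\zeta$; everything else is formal.
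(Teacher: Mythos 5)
Your proposal is correct and follows essentially the same route as the paper: reflect universality through $\dom$ (Theorem~\ref{teordomis2colimfib}, using that the tight components are cartesian) and through the fully faithful nerve $\t{J}$, transport across the commutative square coming from the limit-preserving nerve with compatible cleavages, use $J$-absoluteness exactly once to get the universality of $\t{J}\c\h\dom\c\h\zeta$, and then fall back on Theorems~\ref{teortausthasadjoint} and~\ref{teordomhassemilaxFadjoint}. The only (harmless) difference is organizational: you factor through Proposition~\ref{proptaustprestacks} at the presheaf level, whereas the paper inlines that evaluation-at-$L$ argument in a single three-row commutative diagram down to $\CAT$.
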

\begin{proof}
	Let $\A$ be a small $2$-category and consider a marking $W\:\A\op\to \CAT$ and an $\F$-diagram $H\:\Groth{W}\to\laxslice{\K}{\B}$. Let then 
	$$\zeta\:\Delta 1\aoplaxn{}\HomC{\laxslice{\K}{\B}}{H(-)}{C}$$
	be a universal cartesian-marked oplax $2$-cocone that exhibits $C=\oplaxncolim{H}$ such that $\zeta_{(A,X)}$ is tight for every $(A,X)\in \Groth{W}$. Assume also that $\dom\c \h\zeta$ is $J$-absolute, i.e.\ preserved by $\t{J}\:\K\to \m{\L\op}{\CAT}$. Notice that $\dom\c\h \zeta$ is indeed universal by \corx\ref{remexplicitpresdom}. We want to prove that $\tau\st\c\h \zeta$ is universal as well, exhibiting
$$\tau\st(C)=\oplaxncolim{\left(\tau\st\c H\right)}.$$
	
	Since the $\zeta_{(A,X)}$'s are all cartesian (as they are tight) and $\tau\st$ is an $\F$-functor, by \thex\ref{teordomis2colimfib}, we know that $\dom\:\laxslice{\K}{\E}\to \K$ reflects the universality of $\tau\st\c \zeta$. Moreover, by definition of dense functor, $\t{J}$ is fully faithful and hence reflects any $2$-colimit; and the $2$-functors $(-)(L)\:\m{\L\op}{\CAT}\to \CAT$ of evaluation on $L\in \L$ jointly reflect $2$-colimits. Therefore, in order to prove that $\tau\st\c \zeta$ is universal, it suffices to show that, for every $L\in \L$, the cartesian-marked oplax $2$-cocone $(-)(L)\c \t{J}\c \dom\c \h\tau\st\c \zeta$ is universal.
	
	Notice now that the diagram of $2$-functors
	\begin{cd}[4][5]
		\laxslice{\K}{\B}\arrow[r,"{\tau\st}"]\arrow[d,"{\laxslice{\t{J}}{}}"]\& \laxslice{\K}{\E} \arrow[r,"{\dom}"]\arrow[d,"{\laxslice{\t{J}}{}}"]\& \K \arrow[d,"{\t{J}}"] \\
		\laxslice{\m{\L\op}{\CAT}}{\t{J}(\B)}\arrow[d,"{(-)_L}"]\& \laxslice{\m{\L\op}{\CAT}}{\t{J}(\E)} \arrow[r,"{\dom}"]\arrow[d,"{(-)_L}"]\& \m{\L\op}{\CAT} \arrow[d,"{(-)(L)}"] \\
		\laxslice{\CAT}{\t{J}(\B)(L)}\arrow[r,"{{(\tau\c -)}\st}"']\& \laxslice{\CAT}{\t{J}(\E)(L)}\arrow[r,"{\dom}"']\& \CAT
	\end{cd}
	is commutative, where $\laxslice{\t{J}}{}$ is the $\F$-functor that applies $\t{J}$ on morphisms and triangles. Indeed $\HomC{\K}{J(L)}{-}$ preserves pullbacks, and since a cleavage for $\tau$ is the choice of a left adjoint to $\eta_\tau\:\E\to \slice{\tau}{\B}$, the cleavages determined on the Grothendieck opfibrations $(\tau\c -)\:\HomC{\m{\L\op}{\CAT}}{\X}{\E}\to \HomC{\m{\L\op}{\CAT}}{\X}{\B}$ in $\CAT$ (by applying the universal property of $\slice{\tau}{\B}$) are compatible.
	
	We prove that $\dom\c \h{(\tau\c -)}\st\c (-)_L\c \laxslice{\t{J}}{}\h[-1]\c \zeta$ is universal. We have that $\laxslice{\t{J}}{}\c \zeta$ is universal, since it suffices to check that $\dom\c\h \laxslice{\t{J}}{}\c \zeta$ is so, by \thex\ref{teordomis2colimfib}, as $\zeta$ has tight components and $\laxslice{\t{J}}{}$ is an $\F$-functor. And $\dom\c\h \laxslice{\t{J}}{}\h[-1]\c \zeta=\t{J}\c \dom\c\h \zeta$ is universal because $\dom\c \h\zeta$ is $J$-absolute by hypothesis. Then $(-)_L$ preserves the universality of $\laxslice{\t{J}}{}\c \zeta$ because $\dom\c (-)_L=(-)(L)\c \dom$ does so. Finally, we obtain that $\dom\c \h{(\tau\c -)}\st\c (-)_L\c \laxslice{\t{J}}{}\h[-1]\c \zeta$ is universal applying \thex\ref{teortausthasadjoint} and \thex\ref{teordomhassemilaxFadjoint}, thanks to the hypothesis and to the fact that $\laxslice{\t{J}}{}$, $(-)_L$ and ${(\tau\c -)}\st$ are all $\F$-functors.
\end{proof}

\subsection*{Acknowledgements}
	I would like to thank my supervisor Nicola Gambino for the interesting discussions and his helpful advice on the subject of this paper. I thank Charles Walker for suggesting the possible use of $\F$-categorical techniques to justify my work. Finally, I thank the anonymous referee for their useful comments and suggestions. Part of this research has been conducted while visiting the University of Manchester.

\bibliographystyle{abbrv}
\bibliography{Bibliography2}

\end{document}